\documentclass[reqno, 11 pt]{amsart}
\usepackage{mathtools}
\usepackage{amssymb}   
\usepackage{amsmath}   
\usepackage{euscript}   
\usepackage{hyperref} 
\usepackage{amsthm} 
\usepackage{cleveref} 
\usepackage{dsfont}  
\usepackage{amsfonts}    
\usepackage{marginnote}
\usepackage{latexsym} 
\usepackage{amsopn} %per definire nuovi operatori "operator name"
\usepackage{geometry}
\usepackage{amscd} %per fare diagrammi rettangolari (no frecce diagonali) e mettere le label sulle frecce 
\usepackage{mathrsfs}
\usepackage{caption}
\usepackage{aurical}  
\usepackage[english]{babel}
\usepackage[utf8]{inputenc}
\usepackage{esint} %per i simbOli di integrale
\usepackage{graphicx}
\usepackage[dvipsnames]{xcolor}
\usepackage{multicol}
\usepackage{nomencl}
\makeglossary
\makenomenclature  
\usepackage{refcount}
\usepackage{float}
\usepackage{times}
\usepackage[titletoc]{appendix}
\newcommand{\comment}[1]{}

\definecolor{airforce}{rgb}{0.36, 0.74, 0.86}
 	\definecolor{blue-green}{rgb}{0.0, 0.57, 0.87}

\newcommand{\eps}{\varepsilon}
\newcommand{\e}{{\varepsilon}}

\renewcommand{\ge}{\geqslant}
\renewcommand{\geq}{\geqslant}
\renewcommand{\le}{\leqslant}
\renewcommand{\leq}{\leqslant}
\renewcommand{\Re}{\operatorname{Re}}
\renewcommand{\Im}{\operatorname{Im}}

\newcommand{\id}{\mathrm{Id}}

\newcommand{\opbw}{{Op^{\mathrm{BW}}}}
\newcommand{\opb}{{Op^{\mathrm{B}}}}

\providecommand{\vect}[2]{{\bigl[\begin{smallmatrix}#1\\#2\end{smallmatrix}\bigr]}}   
\providecommand{\sm}[4]{{\bigl[\begin{smallmatrix}#1&#2\\#3&#4\end{smallmatrix}\bigr]}}

\newtheorem{theorem}{Theorem}[section]
\newtheorem*{thm*}{Theorem}
\newtheorem{proposition}[theorem]{Proposition}
\newtheorem{lemma}[theorem]{Lemma}

\newtheorem{cor}[theorem]{Corollary}
\newtheorem*{cor*}{Corollary}
\newtheorem{remark}[theorem]{Remark}
\newtheorem{ex}{Example}

\newtheorem{definition}[theorem]{Definition}

\numberwithin{equation}{section}
\newcommand{\ii}{{\rm i}}

\newcommand{\ov}{\overline}

\newcommand{\C}{{\mathbb C}}

\newcommand{\N}{{\mathbb N}}

\newcommand{\R}{{\mathbb R}}

\newcommand{\T}{{\mathbb T}}
\newcommand{\Z}{{\mathbb Z}}

\usepackage{bm}

\newcommand{\diag}{\mathop{\mathrm{diag}}}

\newcommand{\nnorm}[1]{{\left\vert\kern-0.25ex\left\vert\kern-0.25ex\left\vert #1 
    \right\vert\kern-0.25ex\right\vert\kern-0.25ex\right\vert}}
\usepackage{framed,enumitem}

\providecommand{\vect}[2]{{\bigl[\begin{smallmatrix}#1\\#2\end{smallmatrix}\bigr]}} 
  
\providecommand{\sm}[4]{{\bigl[\begin{smallmatrix}#1&#2\\#3&#4\end{smallmatrix}\bigr]}}

\makeatletter

\renewcommand{\tocsection}[3]{%
\indentlabel{\@ifnotempty{#2}{\bfseries\ignorespaces#1 #2\quad}}\bfseries#3}
\def\l@subsection{\@tocline{2}{0pt}{2.5pc}{5pc}{}}
\def\l@subsubsection{\@tocline{3}{0pt}{4.5pc}{5pc}{}}
\renewcommand\tocchapter[3]{%
  \indentlabel{\@ifnotempty{#2}{\ignorespaces#2.\quad}}#3%
}
\begin{document} 
 
\title[Long time strong Sobolev instability for quasilinear NLS]{Long time strong Sobolev instability for quasilinear NLS}
\date{\today}

\author{Yuri Cacchiò}
\address{\scriptsize{Department of Mathematics, University of Vienna, Oskar-Morgenstern-Platz 1, 1090, Wien, Austria}}
\email{yuri.cacchio@univie.ac.at}

\author{Filippo Giuliani}
\address{\scriptsize{Dipartimento di Matematica, Politecnico di Milano, Piazza Leonardo Da Vinci 32, 20133, Milano, Italy}}
\email{filippo.giuliani@polimi.it}

\author{Felice  Iandoli}
\address{\scriptsize{Dipartimento di Matematica ed Informatica, 
Universit\`a della Calabria, Ponte Pietro Bucci,  87036, Rende, Italy}}
\email{felice.iandoli@unical.it}

\author{Raffaele Scandone}
\address{\scriptsize{Dipartimento di Matematica e Applicazioni ``R. Caccioppoli'', 
Universit\`a degli Studi di Napoli Federico II, Complesso Universitario Monte S.~Angelo, via Cintia, 80126, Napoli, Italy}}
\email{raffaele.scandone@unina.it}

\keywords{Quasilinear NLS, long-time instability, growth of Sobolev norms, weak Birkhoff normal form, paradifferential calculus} 

\subjclass[2010]{35G55, 35A01, 35M11, 35S50}

\begin{abstract}
We prove long-time strong Sobolev instability for a class of quasilinear Schr\"odinger equations on \(\mathbb T^2\) arising, via the Madelung transform, in the description of Euler–Korteweg capillary fluids. More precisely, for every \(s\ge 7\), arbitrarily small \(\mu>0\), and arbitrarily large \(\mathcal K>0\), we construct a solution such that
$$ \|u(0)\|_{H^s}<\mu, \qquad \|u(T)\|_{H^s}>\mathcal K $$
at some finite time \(T\), for which we provide an explicit exponential upper bound.
The instability mechanism is generated by a finite-dimensional resonant Toy Model associated with the cubic semilinear part of the equation, in the spirit of Colliander–Keel–Staffilani–Takaoka–Tao. The main technical difficulty is to justify this dynamics over the long time scale of the energy transfer in the presence of quasilinear derivative losses. We achieve this by combining a weak Birkhoff normal form with paradifferential calculus, microlocal symmetrization, and modified energy estimates.
Our result provides a rigorous quasilinear realization of a resonant energy-transfer mechanism, and suggests a connection with weakly turbulent dynamics in capillary fluids.

\end{abstract}
    
\maketitle

\setcounter{tocdepth}{1}
\tableofcontents

%%%%%%%%%%%%%%%%%%%%%%%%%%%%%%%%%%%%%%%%%%%%%%%%%%%%%%%

\section{Introduction}
\subsection{Setup and main result}
We consider a class of quasilinear  Schr\"odinger equations in dimension two, with periodic boundary conditions, and we investigate the orbital instability of the flow in high regularity Sobolev spaces. %\red{This issue is deeply related with the weak turbulence theory for quantum systems, as discussed below in more details.}\blue{Questa frase qui e' un po' tricky, non si capisce bene e non credo aggiunga interesse, e' una cosa che va spiegata bene perche' la relazione non e' immediata, soprattutto perche' qui non si ha gwp e quindi nessuna norma e' controllata uniformemente in tempo} 
The model under analysis is the following cubic, defocusing Schr\"odinger equation
\begin{equation*}\label{NLS}\tag{NLS}
\left\{\begin{aligned}
&\ii\partial_t u+\Delta u
+\big[\Delta (h(|u|^2))\big]h'(|u|^2)u-|u|^2u=0\\
&u(0,x)=u_0(x),\end{aligned}\right.
\end{equation*}
in the unknown $\C\ni u:=u(t,x)$, $t\in\R$, $x\in \T^2$, where $h(x)$ is a function in $\mathcal{C}^{\infty}(\R;\mathbb{R})$ with a zero of order at least two at the origin. {We will be interested in the long time dynamics of smooth initial data $u_0$.}

Quasilinear Schr\"odinger equations, such as \eqref{NLS}, provide effective models for wave propagation in nonlinear dispersive media where the constitutive response is coupled to field gradients \cite{due}. These models find significant applications in nonlinear optics, specifically in describing self-steepening mechanisms and the dynamics of ultra-short pulses \cite{tre}, as well as in the study of superfluid films and Bose-Einstein condensates when density-dependent dispersion or beyond-mean-field effects are taken into account \cite{quattro,uno}. Moreover, quasilinear Schr\"odinger equations arise in the description of capillarity effects in compressible fluids, we refer to Section \ref{sec:hydro} below for further discussion and perspectives.

The local well-posedness of \eqref{NLS} in $H^s(\T^d)$ with $s>\frac{d}{2}+2$ was proved in \cite{iandoli} (see also \cite{IN2023}), improving upon the earlier work of Feola and Iandoli \cite{FIJMPA}. In contrast, the literature concerning the Euclidean space $\mathbb{R}^d$ is much broader. After Poppenberg's $1$-dimensional result \cite{due}, Kenig, Ponce, and Vega \cite{KPV3, KPV2, KPV} established pioneering results in arbitrary dimension, for a broader class of equations. Subsequently, Marzuola, Metcalfe, and Tataru \cite{MMT,MMT2,MMT3} improved these works by lowering the required regularity of the initial data.

 In the presence of suitable convolution potentials, Feola-Gr\'ebert-Iandoli \cite{FGI} proved a long-time stability result for \eqref{NLS}. More precisely, if $\| u_0\|_{H^s}=\e$ for $\e>0$ sufficiently small, the unique solution to the Cauchy problem \eqref{NLS} has a lifespan of order at least $\e^{-4}$ and its norm remains of order $\e$. In these settings, the quasilinear terms act as small perturbations, preserving the stability of the equilibrium over long time scales. 

On the other hand, in the absence of external potentials which rule out nontrivial resonances, one expects resonant and nonlinear effects to trigger long time instability phenomena. The present paper is devoted to establishing this mechanism.

\medskip

We remark that, for sufficiently regular initial data, the following quantities
\begin{itemize}
	\item total mass
	\begin{equation}\label{mass}
	\mathcal{M}:=\int_{\T^2} |u|^2\,dx,
	\end{equation}
\item total energy 
	\begin{equation}\label{energy}
		H=\int_{\T^2} |\nabla u |^2\,dx+\frac{1}{2}\int_{\T^2} |u |^4\,dx +\frac{1}{2}\int_{\T^2} |\nabla (h(|u|^2)) |^2 dx,
	\end{equation}
\end{itemize}
are conserved along the flow of \eqref{NLS}. In fact, $H$ is a Hamiltonian function for \eqref{NLS}, see Section \ref{sec:ham}.

%\red{In view of the above conservation laws, the $H^1$-norm of a solution is uniformly bounded in time.} \blue{Bisogna stare attenti: questo e' vero fintanto che le soluzioni sono abbastanza regolari ed esistono, purtroppo non abbiamo global well posedness. E' vero pero' che per le nostre soluzioni alcune norme di Sobolev basse, comresa $H^1$ restano molto piccole mentre altre norme alte crescono.} 
 While the aforementioned conservation laws provide an \emph{a priori} bound on the $H^1$-norm as long as the solution remains sufficiently regular, the lack of a global well-posedness theory for \eqref{NLS} prevents us to deduce uniform in time upper bounds. 
 Nevertheless, we prove the existence of solutions with very long lifespan whose low Sobolev norms, including the $H^1$
 norm, remain arbitrarily small, while their higher Sobolev norms undergo an arbitrarily large inflation over time. This behavior is closely connected with transfers of energy among modes of substantially different scales, commonly referred to as energy cascades.

\smallskip

Such mechanisms plays a central role in the \emph{weak turbulence} theory, whose general aim is to study the long-time instability phenomena in dispersive models (such as those arising from quantum mechanics) produced by the interactions between nonlinear waves: we refer to the monograph \cite{Nazarenko} by Nazarenko and references therein for a comprehensive discussion. Formal theoretical physics arguments suggest that energy transfer is generic with respect to a suitable randomization of the initial data. For the \emph{semilinear} NLS, a relevant step in this direction has been obtained by Deng-Hani \cite{DH2023}, who provide a rigorous derivation of the wave kinetic equation, an effective description for the interaction between Fourier modes for randomized data, under certain scaling limits. Still, current techniques only allows to construct specific solutions which exhibit norm inflation. For semilinear models, there are nowadays various results in this direction, while in the quasilinear setting few results are available; we refer to Section \ref{se:comparison} for a comprehensive discussion of the literature. In particular, instability mechanisms for the quasilinear equation \eqref{NLS} were still unknown, and our main result provides a first positive answer to this question.

\begin{theorem}\label{th:main}
Fix $s \ge 7$. 
There exist absolute constants $C,\gamma>0$ such that for any sufficiently small $\mu > 0$ and any sufficiently large $\mathcal{K}>0$, the following holds.
There exists an initial datum $u_0\in H^s(\T^2)$ and a time $T=T(\mu,\mathcal{K})>0$ satisfying 
\begin{equation}\label{time_bound}
    T \le \exp\left( C \left(\frac{\mathcal{K}}{\mu}\right)^{\frac{\gamma}{s-1}} \right), 
\end{equation}
 such that \eqref{NLS} admits a unique solution $u\in\mathcal{C}([0,T],H^s(\T^2))$ satisfying $u(0)=u_0$ and 
$$\|u_0\|_{H^s}<\mu,\qquad \|u(T)\|_{H^s}>\mathcal{K}.$$
\end{theorem}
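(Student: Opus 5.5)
The approach follows the CKSTT scheme (Colliander–Keel–Staffilani–Takaoka–Tao), adapted to the quasilinear setting along the lines announced in the abstract.

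\textbf{Step 1: normal form reduction.} Pass to the rescaled variable $u=\lambda v$, with $\lambda$ small, chosen later in terms of $\mu$ and $\mathcal K$. The cubic term $|u|^2u$ then dominates the quasilinear term $[\Delta h(|u|^2)]h'(|u|^2)u$. Since $h$ vanishes to order two, the quasilinear term is at least quintic, of size $O(\lambda^4)$ with two derivatives. Paralinearize \eqref{NLS} in Bony--Weyl form $\opbw$. Then perform a weak Birkhoff normal form that removes the non-resonant cubic interactions. Only the resonant quartets $n_1-n_2+n_3-n_4=0$, $|n_1|^2-|n_2|^2+|n_3|^2-|n_4|^2=0$ survive, together with the phase-rotation term. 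The normal form transformation is bounded on $H^s$ because the cubic part is semilinear. The quintic quasilinear remainders are handled paradifferentially: the highest-order part is a real-symbol paradifferential operator after a microlocal symmetrization. This symmetrization diagonalizes the $2\times2$ system in $(v,\bar v)$ up to bounded lower-order terms.

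\textbf{Step 2: the Toy Model.} Construct a finite set $\Lambda=\Lambda_1\cup\dots\cup\Lambda_N\subset\Z^2$ of CKSTT type. It has the closure, intergenerational-equality and nondegeneracy properties, and satisfies
\[
\sum_{n\in\Lambda_{N-1}}|n|^{2s}\gtrsim 2^{(s-1)(N-2)}\sum_{n\in\Lambda_3}|n|^{2s}.
\]
Restricted to $\Lambda$, the resonant cubic system reduces to the Toy Model $\dot b_j=-\ii b_j^2\bar b_j+2\ii\bar b_j(b_{j-1}^2+b_{j+1}^2)$. This model admits a solution on a time interval of length $T_0\sim N\log(1/\delta)$ that transfers mass from generation $3$ to generation $N-1$. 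Following Guardia--Kaloshin, or CKSTT with explicit scaling of $\Lambda$, choose $N$ so that $2^{(s-1)N}\sim(\mathcal K/\mu)^{2}$. The physical time is $T=\lambda^{-2}T_0$. Taking $\lambda$ and the size of the frequencies of $\Lambda$ as powers of $e^{CN}$ gives the bound \eqref{time_bound} with the exponent $\gamma/(s-1)$.

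\textbf{Step 3: approximation argument (main obstacle).} Show that the true solution, started from the Toy Model datum placed on $\Lambda$, stays $H^s$-close to the approximate solution over the whole interval $[0,T]$. Write $v=v_{\Lambda}+w$. For the cubic part, the CKSTT Gronwall argument in $\ell^1$ (Wiener algebra) works over time $\lambda^{-2}T_0$, since the error forcing is $O(\lambda^4)$ or smaller. The quasilinear part is the real difficulty: a naive estimate loses two derivatives. Here I would prove a modified energy estimate for $\|w\|_{H^s}$ using the symmetrized paradifferential system. The top-order operator is self-adjoint, so it contributes nothing to $\frac{d}{dt}\|w\|_{H^s}^2$. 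The subprincipal terms are controlled by adding cubic-in-$w$ and lower-order energy corrections, which cancel the non-resonant contributions. This should give
\[
\frac{d}{dt}E_s(w)\lesssim \lambda^2 C(\|v_\Lambda\|_{C^{k}})\,E_s(w)+\lambda^{4}\,\mathrm{(forcing)} .
\]
Both $v_\Lambda$ and $w$ must be measured in a norm weighted by the size of the frequencies in $\Lambda$. This forces $s\ge7$: we need $s>d/2+2$ for local well-posedness, plus extra room for the symbol seminorms and the normal form losses. A bootstrap then closes provided $\lambda^2 T\cdot(\text{Gronwall constant})\lesssim T_0$. This requires the quasilinear contribution to be $O(\lambda^4)$, so that it is negligible against the available time scale $\lambda^{-2}$. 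Continuation past the local well-posedness time follows from the a priori bound together with the local theory of \cite{iandoli}.

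\textbf{Step 4: conclusion.} Undo the normal form and the rescaling. This gives $\|u_0\|_{H^s}<\mu$ and $\|u(T)\|_{H^s}>\mathcal K$. Uniqueness in $\mathcal C([0,T],H^s)$ comes from the local theory.
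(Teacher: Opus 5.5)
The main gap is in Step~1, and it is the structural idea the paper is built around.

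\textbf{The normal form cannot be the full one.} You remove \emph{all} non-resonant quartic interactions and justify this by saying the map is bounded on $H^s$. Boundedness is not what is needed. In the new variables the quasilinear field becomes $[d\Phi(Z)]^{-1}X_{H^{(\geq 8)}}(\Phi(Z))$. This contains terms such as $(d\Phi(Z)^{-1}-\id)\opbw(A)Z$ and $\opbw(A)(\Phi(Z)-Z)$, where $\opbw(A)$ has order two.
\begin{itemize}
\item For the full normal form, $d\Phi-\id$ has coefficients $1/\omega$, with $\omega=-2(n_1-n_2)\cdot(n_3-n_2)$ on the momentum-conserving set.
\item In high--low interactions this gives no uniform gain: take $n_1$ almost orthogonal to $n_3-n_2$. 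The operator is also not paradifferential.
\item So these terms lose two derivatives of $W$ in a non-symmetric way, and no symmetrization or modified energy absorbs them. This is why the paper says a standard Birkhoff normal form is unsuitable.
\end{itemize}
The missing idea is the \emph{weak} normal form.
\begin{itemize}
\item Eliminate only $H^{(4,1)}$, the monomials with exactly one mode outside $\Lambda$. These are non-resonant by the closure property (P1), and all their frequencies satisfy $|n|\le 3M$.
\item Then $\Phi-\id$ and $d\Phi-\id$ have range in $\Pi_{\le 3M}$. The paradifferential structure survives, and derivative losses become explicit powers of $M$. Examples are the term $\widetilde G$ and the bound $\|\Phi(Z)-Z\|_{s+2,M}\lesssim M^2\|\Phi(Z)-Z\|_{s,M}$.
\item The non-resonant part $H^{(4,\geq 2)}$ is simply kept. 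Its vector field vanishes on $\Lambda$-supported functions, so it only enters the Lipschitz bound of Lemma~\ref{lem:vector_field_diff}.
\end{itemize}

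\textbf{The bookkeeping in Step~3 is also off.} This is exactly where $s\ge 7$ and the time bound come from. Below I use the paper's normalization: $\lambda=\mathbf R^{s_*}$ is large, the mode amplitude is $\lambda^{-1}$, and $T=\lambda^2T_0$.
\begin{itemize}
\item Since $h$ has a double zero, the quasilinear term is septic, not quintic.
\item It is not an $O(\lambda^4)$ perturbation. Its two derivatives and the $H^{s_0}$ norms ($s_0>4$) of its coefficients fall on $v_\Lambda$, whose frequencies have size $\mathbf R$.
\item This produces the forcing $\opbw(A(\Phi(Z)))V^\lambda$, which costs $\|V^\lambda\|_{s+2,M}$.
\item It also produces a loss $M\|Z\|_{s_0,M}^6$. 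The symmetrized top-order part does \emph{not} drop out of $\frac{d}{dt}\|w\|_{H^s}^2$, because $[\langle D\rangle^s,\opbw(\mu|\xi|^2)]$ has order $s+1$. One needs the weighted energy $(\opbw(\mu^s)\langle D\rangle_M^sW_1,\langle D\rangle_M^sW_1)_{L^2}$, whose bracket leaves $-2sM^2\mu^s\langle\xi\rangle_M^{2s-2}\,\xi\cdot\nabla_x\mu$.
\item Requiring $M\|Z\|^6_{s_0,M}\lesssim\theta^2\lambda^{-2}$, with $\theta=N2^{N-1}$, gives $s_*>(6s_0+1)/4$. Here $s_*=s+\delta$ is forced by $\|u_0\|_{H^s}\sim\mu$, so this is what $s\ge 7$ guarantees. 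Local well-posedness is not the constraint.
\end{itemize}

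\textbf{The Gronwall constant and the time bound.}
\begin{itemize}
\item Your Gronwall constant $C(\|v_\Lambda\|_{C^k})\sim\mathbf R^k$ would be fatal.
\item You also cannot run the cubic part in $\ell^1$ and the quasilinear part in $H^s$ separately. The $\langle n\rangle_M$ weight is what makes the cubic Lipschitz constant, in the energy norm itself, only $\|v^\lambda\|_{\ell^1}^2\lesssim\theta^2\lambda^{-2}$.
\item Even then, over $T=\lambda^2T_0$ the Gronwall factor is $e^{2\theta^2T_0}\sim e^{cN^44^N}$. It must be beaten by a small power of $\lambda=\mathbf R^{s_*}$, which forces $\log\mathbf R\gg N^44^N$. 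The paper takes $\mathbf R\ge\exp(\alpha^N)$ with $\alpha>4$.
\item With this argument, frequencies of size $e^{CN}$ would not close the bootstrap. The super-exponential size, together with $N\sim\log(\mathcal K/\mu)/(s-1)$, is what gives $T\le\exp(C(\mathcal K/\mu)^{\gamma/(s-1)})$.
\end{itemize}
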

%\red{$\gamma$ non e' una absolute constant, dipende da $s$, infatti la norma $H^1$ nel toy model e' preservata, quindi se $s\to 1$ l'upper bound nel tempo esplode. }{\color{orange} HAi ragione. Inserito il $\gamma$ esplicito che avevo derivato nella dimostrazione }

Theorem \ref{th:main} provides the existence of small data solutions escaping arbitrarily far from the rest equilibrium $u=0$. This can be seen, from a dynamical point of view, as a result of strong topological instability: Hani \cite{H} named this phenomenon as \emph{long time strong instability}.

\begin{remark}
    Once an unstable trajectory is constructed on $\T^2$, it can also be embedded into higher dimensions. In dimension one, the picture is instead qualitatively different. In the semilinear setting (namely, $h\equiv 0$), the corresponding NLS equation is completely integrable, and all Sobolev norms remain uniformly bounded in time. In the quasilinear setting, the exact integrable structure is no longer available, and whether unstable solutions exist remains an open problem. Still, exploring this direction would require new technical ideas.
\end{remark}

\begin{remark}
The specific structure of \eqref{NLS}  restricts the possible  quasilinear nonlinearities. For a smooth function \(h\), the corresponding quasilinear term is either cubic or septic (or higher-order). Quintic terms could be considered within a more general class of Hamiltonian Schr\"odinger equations, such as those studied in \cite{FIJMPA,KPV3,KPV2,KPV}. We do not pursue this setting here, since \eqref{NLS} is particularly relevant for the physical and hydrodynamic applications discussed in Section \ref{sec:hydro}. We restrict our analysis to septic (or higher-order) terms, corresponding to the assumption that $h$ has a zero of order at least two at the origin. The cubic case instead is not covered by our method. In this regime, the quasilinear interactions would have to be incorporated directly into the effective resonant finite-dimensional dynamics described in Section \ref{sec:toy}. Constructing such a resonant dynamics for quasilinear equations appears to be extremely challenging at present.
\end{remark}

\begin{remark}
By the local well-posedness and the continuous dependence on initial data for \eqref{NLS}, the strict inequalities in Theorem \ref{th:main} persist for any initial data in a sufficiently small neighborhood of the constructed $u_0$. Our approximation argument gives a quantitative version of this observation: the explicit neighborhood in \eqref{eq:open} consists of initial data whose solutions exist  up to the same time and exhibit the prescribed Sobolev norm inflation.
\end{remark}

\begin{remark}
The lower bound $ s\geqslant 7$ could be slightly improved, but our methods appear to require at least $s>4$, due to the quasilinear structure of \eqref{NLS} -- see the discussion in Subsection \ref{sec:s_0} of Appendix \ref{app:para}.
\end{remark}

\subsection{Comparison with previous literature}\label{se:comparison}
The question of whether dispersive PDEs on compact manifolds could exhibit unbounded growth of Sobolev norms was raised by Bourgain \cite{Bourgain1996} and Kuksin \cite{Kuksin1997}. A breakthrough result in this direction was achieved by Colliander, Keel, Staffilani, Takaoka, Tao \cite{CKSTT}, who provided the first rigorous proof of \emph{long time strong instability} for the defocusing cubic NLS on $\T^2$. Following this seminal work, an extensive literature has emerged. Guardia-Kaloshin \cite{GK2015} established polynomial upper bounds on the instability time. Subsequently, the weak turbulence mechanism was generalized to several other settings. It was extended to quintic NLS by Haus, Procesi \cite{HP2015} and then to generic analytic NLS by Guardia, Haus, and Procesi \cite{GHP2016}. It was extended to neighborhoods of finite-gap tori by Guardia-Hani-Haus-Maspero-Procesi \cite{GHHMP}. For the cubic NLS on the product space $\T^2\times \R$, Hani-Pausader-Tzvetkov-Visciglia \cite{HPTV2015} obtained the existence of trajectories with unbounded growth, exploiting the dispersion in the $\R$-direction. We point out that the analogous result is unknown on any compact manifold. The framework of irrational tori has been considered by Giuliani-Guardia \cite{GG2022}, Staffilani-Wilson \cite{SW2020}, and Giuliani \cite{G2025}, which also considers NLS with convolution potentials.

Concerning quasilinear NLS, the situation is more involved and few results are available. Indeed, establishing the growth of $H^s$ norms requires controlling the dynamics in regimes where the quasilinear terms can no longer be treated as small perturbations. This feature, combined with the loss of derivatives related to the quasilinear PDEs, poses a technical obstacle for the study of norm inflation. Recently, Maspero-Murgante \cite{MM2024} provided one-dimensional energy cascades for a \emph{fractional} quasilinear NLS. Furthermore, Langella-Maspero-Murgante-Terracina \cite{LMMT2026} have established the transfer of energy for pure-gravity water waves, relying on a nonlinear transport structure.

In the present paper, we address the topological instability for the full quasilinear model \eqref{NLS}, which does not rely on nonlinear transport: in particular, the energy cascade is caused by the interaction of dispersive waves, according to the prediction of weak turbulence theory. 

The fundamental obstacle lies in the fact that the higher-order nonlinearities cannot be treated as small perturbations in any suitable topology. Specifically, tracking the evolution of data that eventually escape from any bounded set in $H^s$ prevents the nonlinear terms from being considered  small in such topologies. On the other hand, the presence of second-order derivatives in the quasilinear term $[\Delta ( h(|u|^2) )] h'(|u|^2) u$ prevents working in weak topologies (such as $\mathcal{F}\ell^1$), which are exploited in the semilinear literature \cite{CKSTT, GHP2016, GK2015}. This rigidity precludes the use of standard Birkhoff normal forms, requiring the development of a different strategy, which we describe
below.

\subsection{Hydrodynamic interpretation}\label{sec:hydro} 
Let $u=|u|e^{\ii\theta}$ be a solution to \eqref{NLS}. Then its Madelung transform 
\begin{equation}\label{eq:madelung}
\rho=|u|^2,\quad v=2\nabla\theta,
\end{equation}
formally solves the Euler-Korteweg system
\begin{equation}\label{eq:EK}
	\begin{cases}
		\partial_t \rho+\operatorname{div}(\rho v)=0, \\
		\partial_t (\rho v)+\operatorname{div}(\rho v \otimes v)+ \nabla p(\rho)=\rho\nabla \Big(\operatorname{div}(k(\rho)\nabla\rho)-{\textstyle{\frac{1}{2}}}k'(\rho)|\nabla\rho|^2\Big),
	\end{cases}
\end{equation}
where $p(\rho)=\rho^2$ is the pressure term, and $k(\rho)=\frac{1}{\rho}+2h'(\rho)^2$ is the so-called capillarity coefficient. System \eqref{eq:EK}, in which the classical compressible Euler equation are augmented by a third order nonlinear term, describes capillarity effect in diffuses interfaces \cite{Kort}. The above connection was first observed in the seminal work by Madelung \cite{Madelung}, and then widely exploited in the mathematical literature \cite{AMQ3,AMQ2,AMHZ-Rims,CDS}. We refer to the review paper \cite{AM2026} and references therein for a detailed discussion. The particular choice $k(\rho)=\frac{1}{\rho}$ in \eqref{eq:EK} gives the so-called quantum hydrodynamic (QHD) system, used to describe the dynamics of compressible fluid exhibiting quantum effects, see e.g.~\cite{Khal}. In this case, the corresponding NLS become semilinear, namely $h=0$.

Local well-posedness for \eqref{eq:EK} on $\T^d$, at sufficiently high Sobolev regularity, has been proved by Berti-Maspero-Murgante \cite{BMM1}. Moreover, Feola-Iandoli-Murgante \cite{FIM2021} established the long-time stability of the QHD system on irrational tori.

The existence of solutions to \eqref{eq:EK} exhibiting energy cascade is a challenging question. A main issue is related to the rigorous application of the Madelung transform in presence of vacuum, namely when $u$ vanishes at some point. First of all, the velocity field $v=2\nabla\theta$ is not well-defined, and \eqref{eq:madelung} must be suitably generalized in order to deal with meaningful hydrodynamic quantities, see \cite{AM2026} for a comprehensive discussion. Moreover, one can not expect to transfer information on high Sobolev norms from \eqref{NLS} to \eqref{eq:EK}: for example, the Nemitskii operator $u\mapsto\sqrt{\rho}=|u|$ is bounded on $H^s$ only when $s<\frac32$, see e.g.~\cite{BM91}. For the QHD system, Giuliani-Scandone \cite{GS2025} proved the existence of solutions with almost constant mass density, displaying arbitrarily large growth of Sobolev norms. To this aim, they construct solutions to the semilinear NLS which exhibits the required norm inflation and are \emph{small amplitude} perturbation of a \emph{plane wave}. This allows to avoid the issues with vacuum, but introduces further technical difficulties, see also \cite{GHHMP}. Extending the results in \cite{GS2025} to \eqref{eq:EK} is a difficult task, in view of the quasilinear nature of the corresponding Schr\"odinger equation. In this perspective, our analysis lay the technical bases for further understanding of instability mechanisms for compressible, capillary fluids.

\subsection{Strategy of the proof}
The proof of Theorem \ref{th:main} is based on a long-time approximation of special unstable trajectories of a finite-dimensional resonant system by solutions of the full quasilinear \eqref{NLS} equation. The resonant system is the same Toy Model introduced by Colliander, Keel, Staffilani, Takaoka and Tao in \cite{CKSTT}, and originates entirely from the semilinear cubic part of the equation. The main difficulty is therefore to prove that these unstable orbits remain a valid approximation of the full quasilinear PDE for a time interval long enough for the transfer of energy to take place.

We first recall the finite-dimensional mechanism. Following \cite{CKSTT} and its subsequent refinements \cite{GHP2016}, we select a finite set of Fourier modes
$\Lambda\subset \Z^2$
 satisfying suitable closure and non-degeneracy properties with respect to the quartic resonances of the cubic NLS. Restricting the resonant quartic dynamics to \(\Lambda\), and then reducing the degrees of freedom by exploiting the symmetry of the cubic NLS interactions, yields the Toy Model of \cite{CKSTT}. This system possesses special trajectories along which the energy is transferred  from modes carrying a relatively small Sobolev weight to modes carrying a much larger one. After exploiting the cubic scaling of the Toy Model, we obtain trajectories \(v^\lambda\), supported on \(\Lambda\), which display an arbitrarily large growth of the \(H^s\)-norm over a time interval of size $
T=\lambda^2 T_0 $, where $T_0>0$ depends on the prescribed growth, but it is independent of $\lambda$.
The scaling parameter $\lambda$ plays a fundamental role below.

The first step towards comparing these trajectories with the full equation is to introduce coordinates in which the dynamics of the modes in $\Lambda$ is sufficiently decoupled from the remaining Fourier modes. A standard Birkhoff normal form would be unsuitable here. Indeed, although the quasilinear part of the Hamiltonian starts at higher degree, we do not want to modify much the paradifferential structure of \eqref{NLS} after the conjugation with the Birkhoff map.  We therefore perform only a weak version of the Birkhoff normal form. More precisely, at quartic order we eliminate only the monomials containing exactly one mode outside $\Lambda$, namely those which directly couple the Toy-Model dynamics to its normal directions. The corresponding generating Hamiltonian involves only finitely many Fourier modes. Consequently, the time one flow map $\Phi$, and its inverse, differs from the identity by finite-rank operators. This is crucial, since conjugating the equation by this map leaves the \eqref{NLS} paradifferential structure essentially unchanged, up to smoothing contributions. In the new coordinates, the equation consists of the truncated Hamiltonian whose restriction to $\Lambda$ produces the Toy Model, plus higher-order terms that we need to show to be perturbative.

We then turn to the main part of the proof, namely the approximation argument. Let \(z(t)\) solve the transformed NLS equation and let \(v^\lambda(t)\) be the prescribed Toy-Model trajectory. Setting $w:=z-v^\lambda$,
we derive an equation for the difference. A direct energy estimate cannot be closed: the quasilinear remainder contains two derivatives, while the reference trajectory itself develops large high Sobolev norms. We therefore paralinearize the difference equation. The order-two quasilinear contribution is incorporated into the principal operator acting on \(w\), rather than being treated as an external forcing. We then perform microlocal changes of variables which diagonalize and symmetrize this principal operator. After this reduction, the leading second-order symbol becomes diagonal and skew-adjoint at principal level, while the first-order part has the appropriate real structure.

The standard \(H^s\) energy still does not allow to close the argument. We introduce instead a modified paradifferential energy, adapted to the symmetrized operator. Its weight is chosen so that the highest-order commutators, generated when differentiating the energy, cancel at the leading order. 

Despite obtaining an energy estimate for the difference equation, an additional difficulty is to propagate the stability estimate in \(H^s\) up to the long time scale on which the Toy Model exhibits energy transfer. This requires keeping the low Sobolev norms sufficiently small and controlling the loss of derivatives falling on the reference trajectory \(v^\lambda\).\\
To handle the former, we choose $\lambda$ in such a way that certain low norms remain small over long time, while the $H^s$ norm is allowed to display an arbitrarily large growth.\\
To deal with the latter,
a key ingredient is the use of Sobolev norms \(\|\cdot\|_{s,M}\) (see \eqref{norma_sM}) adapted to the Fourier geometry of the set \(\Lambda\). This set is contained in a high-frequency annulus, whereas the normal modes may also lie in its interior, at frequencies much smaller than those of the Toy-Model orbit. This causes a difficulty in the stability estimates: when a low-frequency normal mode interacts with modes in \(\Lambda\), the output may have frequency of order \(M\), and in the standard Sobolev topology the corresponding \(H^s\)-weight cannot be assigned entirely to the normal component. One would then be forced to place high derivatives on the reference solution \(v^\lambda\), whose high Sobolev norms grow along the cascade. The modified weight
\[
\langle n\rangle_M^s=(M^2+|n|^2)^{s/2}
\]
removes this mismatch by assigning a weight of order \(M^s\) also to the low frequencies inside the annulus. This allows the full \(s\)-derivative weight in the cubic stability estimate to fall on the perturbation \(w\), while the factors \(v^\lambda\) are controlled only in weak norms; this mechanism is crucial in Lemma \ref{lem:vector_field_diff}. The paradifferential analysis is therefore developed uniformly in this adapted topology, with the corresponding product, composition, paralinearization and smoothing estimates collected in Appendix \ref{app:para}.

Eventually, a bootstrap argument, combined with the modified energy estimate and Gronwall's inequality, then shows that
$$
\sup_{0\leq t\leq T}
\|z(t)-v^\lambda(t)\|_{s,M}
$$
remains much smaller than the size of the Toy-Model orbit up to the full transfer time \(T\).

Finally, the smallness of the trajectory in a weak Fourier norm guarantees that the Birkhoff map \(\Phi\) remains well defined throughout \([0,T]\). Hence, $u(t)=\Phi(z(t))$
is a solution of the original quasilinear NLS on the same interval. Since \(\Phi\) is close to the identity in \(H^s\), the large Sobolev growth of the Toy-Model trajectory is transferred first to \(z(t)\) and then to \(u(t)\).

\subsection*{Organization of the paper}
In Section \ref{sec:funct} we introduce the functional setting and the notation used throughout the paper. In Section \ref{sec:toy} we introduce the Lambda set and construct unstable trajectories for the corresponding Toy model. Section \ref{sec:wb} is devoted to the Birkhoff normal form procedure. In Section \ref{sec:energy} we provide suitable energy estimates, after introducing the necessary definition and tools from paradifferential calculus. In Section \ref{sec:approximation} we provide a suitable approximation argument, eventually proving our main result, Theorem \ref{th:main}.

\subsection*{Statements and Declarations} The authors declare that no artificial intelligence tools were used in the preparation, analysis, or writing of this manuscript.

\section{Notation and functional setting}\label{sec:funct}
We denote by $\langle x\rangle=\sqrt{1+|x|^2}$ the standard Japanese bracket. Moreover, given $M\geq 1$, we set $\langle  x\rangle_{M}=\sqrt{M^2+|x|^2}$. We denote by $C$ a positive constant, which may vary from line to line. We point out that $C$ may depend on fixed parameters, whose dependence will not be tracked explicitly, but is always independent of the parameter $M$ which defines the modified Japanese bracket. Given $A,B>0$, we write $A\lesssim B$ if $A\leqslant CB$. We write $A\sim B$ if $A\lesssim B$ and $B\lesssim A$. We denote by $B_X(r)$ a ball of radius $r>0$ in a Banach space $X$. We write $(\cdot,\cdot)_{L^{2}}$ the standard complex $L^{2}$-scalar product, namely
\begin{equation}\label{scalarproScalare}
	(u,v)_{L^{2}}:=\int_{\mathbb{T}^{2}}u\bar{v}dx\,, 
	\qquad  \mbox{for all}\,\,\, u,v\in L^{2}(\mathbb{T}^{2};\mathbb{C})\,.
\end{equation}
We expand a function $u(x)$, $x\in \mathbb{T}^{2}$, in Fourier series as 
\begin{equation}\label{complex-uU}
	u(x) = \frac{1}{2\pi}
	\sum_{n \in \mathbb{Z}^{2} } \widehat{u}(n)e^{\ii n\cdot x } \, , \qquad 
	\widehat{u}(n) := \frac{1}{2\pi} \int_{\mathbb{T}^{2}} u(x) e^{-\ii n \cdot x } \, dx \, .
\end{equation}
We also use the notation
$u_n := \widehat{u}(n)$ and $
\ov{u_n}:=\ov{\widehat{u}(n)}$. Given a subset of $\Lambda\subseteq \Z^2$, we consider the projections
\[
\Pi_{\Lambda} u=\frac{1}{2\pi}\sum_{n\in\Lambda} u_n \,e^{\mathrm{i} n x}, \qquad \Pi^{\perp}_{\Lambda} u=\frac{1}{2\pi}\sum_{n\notin\Lambda} u_n \,e^{\mathrm{i} n x}.
\]
We also set, for $\beta\geq 0$,
 \[
\Pi_{\le \beta} u=\frac{1}{2\pi}\sum_{|n|\le \beta} u_n \,e^{\mathrm{i} n x}, \qquad \Pi_{>\beta} u=\frac{1}{2\pi}\sum_{|n|> \beta} u_n \,e^{\mathrm{i} n x}.
\]
Given $s\geqslant 0$, we consider the classical Sobolev space $H^{s}(\mathbb{T}^{2};\mathbb{C})$ endowed with the norm 
$$\|u(\cdot)\|_{s}^{2}:=\sum_{n\in \mathbb{Z}^{2}}\langle n\rangle^{2s}|u_{n}|^{2}\,.$$ 
It will be convenient to consider also the equivalent Sobolev norm 
\begin{equation}\label{norma_sM}
	\|u(\cdot)\|_{s,M}^{2}:=\sum_{n\in \mathbb{Z}^{2}}\langle n\rangle_M^{2s}|u_{n}|^{2}\,.
	\end{equation}
We consider the Fourier multipliers
\[(D^su)_n=\langle n\rangle^su_n,\qquad (D_M^su)_n=\langle n\rangle_M^su_n,
\]
so that we have
\[ 
\| D^s u\|_{\ell^2}=\|u\|_{s},\qquad \| D^s_M u\|_{\ell^2}=\|u\|_{s, M}.
\]
 We recall the following interpolation estimate
\begin{equation}\label{interpolo}
	\|u\|_{{s},M}\leq \|u\|^{\theta}_{{s_1,M}}\|u\|^{1-\theta}_{{s_2,M}}\,,
	\qquad \theta\in[0,1]\,,\;\;s_1\leq s_2\,,\quad s=\theta s_1+(1-\theta)s_2\,.
\end{equation}
{We denote by $\ell^1=\ell^1(\Z^2; \C)$ the space of complex-valued sequences $u = \{u_n\}_{n\in\Z^2}$ such that $\|u\|_{\ell^1} := \sum_{n\in\Z^2}|u_n| < \infty$. In what follows, we will often identify a function $u$ defined on $\mathbb{T}^2$ with its sequence of Fourier coefficients $\{u_n\}_{n\in\Z^2}$. Given two sequences $u, v \in \ell^1$, we denote their discrete convolution by $(u * v)_n := \sum_{m \in \Z^2} u_{n-m} v_m$.

For every $s>1$ and $M\ge1$, the Cauchy--Schwarz inequality gives
\begin{equation}\label{smoothMs}
\|u\|_{\ell^1}= \sum_{n\in\mathbb Z^2}
   \langle n\rangle_M^{-s}
   \langle n\rangle_M^s |u_n| \le
\left(\sum_{n\in\mathbb Z^2}
      (M^2+|n|^2)^{-s}\right)^{1/2}
\|u\|_{s,M}
\lesssim_s M^{1-s}\|u\|_{s,M}.
\end{equation}
where in the last inequality we split the sum into $|n|\le M$ and $2^jM<|n|\le 2^{j+1}M$, $j\ge0$.

Finally, given two smooth symbols $a(x,\xi)$ and $b(x,\xi)$ defined on $\mathbb{T}^2 \times \R^2$, we define their Poisson bracket as customary:
\begin{equation}\label{PoissonBra}
	\{a,b\} := \nabla_{\xi}a \cdot \nabla_{x}b - \nabla_{x}a \cdot \nabla_{\xi}b\,.
\end{equation}

\subsection{Hamiltonian structure of \eqref{NLS}}\label{sec:ham} 
Given $u\in H^s$, we set
\begin{equation}\label{fF}
	U := \vect{u}{\overline{u}}\,,
\end{equation}
and consider the space
\begin{equation}\label{RealSobolev}
	\mathcal{H}^s=\mathcal{H}^s(\T^2) = \big\{\, U = \vect{u}{\overline{u}} : u\in H^s(\T^2) \,\big\}\,,
\end{equation}
which is a closed real subspace of $H^s(\T^2)\times H^s(\T^2)$. 
With a slight abuse of notation, we shall denote by $\|\cdot\|_{{s},M}$ the  norm on $\mathcal{H}^{s}$. Note  that  $\|Z\|_{s,M}^2 = 2\|z\|_{s,M}^2$.

Moreover, we can naturally extend to $\mathcal{H}^s$ the scalar product \eqref{scalarproScalare} as
\begin{equation}\label{scalarprod2x2}
	(Z,W)_{L^{2}\times L^{2}} := 2\mathrm{Re}(z,w)_{L^{2}} = \int_{\T^2} (z \overline{w} + \overline{z} w) \, dx\,,
	\qquad Z = \vect{z}{\overline{z}}\,,\; W = \vect{w}{\overline{w}} \in \mathcal{H}^{0}\,.
\end{equation}

Throughout the paper we shall perform some coordinate transformations $\Phi: \mathcal{H}^s \to \mathcal{H}^s$. 
Explicitly, for any $Z = (z, \bar{z}) \in \mathcal{H}^s$, a map preserving the real structure (i.e. it takes values in $\mathcal{H}^s$) necessarily takes the form 
\begin{equation}\label{eq:real_map}
    \Phi(Z)=\big( \Phi^{(1)}(z, \bar{z}), \Phi^{(2)}(z, \bar{z})  \big) = \big(\Phi^{(1)}(z, \bar{z}), \, \overline{\Phi^{(1)}(z, \bar{z})}\big) \, ,
\end{equation}
where $\Phi^{(1)}, \Phi^{(2)}$ denote the first and second component of the transformation. By \eqref{eq:real_map}, the entire map $\Phi$ is uniquely determined by $\Phi^{(1)}$. 
This structural property allows us to introduce a slight abuse of notation: we identify the map $\Phi$ with its first component $\Phi^{(1)}$ and write $\Phi(z)$ to denote the action of this first component on the variable. In other words, $\Phi(z)$ stands for $\Phi^{(1)}(z, \bar{z})$. Consequently, the action of the full transformation on the space $\mathcal{H}^s$ simply reads
$    \Phi(Z) = \big( \Phi(z), \, \overline{\Phi(z)} \big) \, $. Sometimes we shall use some linear coordinate transformations depending on a fixed variable $Z=(z,\bar{z})$, we shall adopt the same notation
   $  \Phi(Z)W = \big( \Phi(z)W, \, \overline{\Phi(z)W} \big) \, ,$ for any $W=(w,\bar{w})$.

%Da uniformare questo sotto con quello sopra\\
%Notice that in Section \ref{sec:energy}, the energy estimates in Proposition \ref{pr:ener_est} is derived for the vector $Z = (z, \overline{z})^T \in \mathcal{H}^s(\mathbb{T}^2)$ in order to account for the $2 \times 2$ matrix of the operators. However, due to the symmetry of the real Hamiltonian flow with respect to its conjugate, the dynamics are determined by the first component. Furthermore, the embedding $z \mapsto (z, \overline{z})^T$ guarantees that the Sobolev norm of the vector state is proportional to the norm of its scalar component, namely $\|Z\|_{s,M}^2 = 2\|z\|_{s,M}^2$. Therefore, the upper bounds obtained in Proposition \ref{pr:ener_est} translate to their scalar counterparts. From this point forward we work with the scalar variables $z, w, v^\lambda \in H^s(\mathbb{T}^2; \mathbb{C})$.       

We also set
\begin{equation}\label{matriciozze}
	E := \sm{1}{0}{0}{-1}\,,\quad
	\id := \sm{1}{0}{0}{1}\,,\quad \diag(b) := b\,\id\,, \quad b\in\mathbb{R}\,.
\end{equation}
We consider the symplectic form 
%{\color{orange} qui va messo il segno - oppure vanno cambiati i segni meno in $dH(u)[h]$ e in $\{F,G\}$. Per ora cambio segno qui}
\[
\Omega(z, w)=-2 \mathrm{Re} (\mathrm{i} z, w)_{L^2}
\]
and we define the vector field of a Hamiltonian $H$ as the unique $L^2$ function $X_H$ such that
%\red{Bisogna capire se mettere le doppie variabili o no} 
%\blu{(Felice) al massimo aggiungerei la versione in doppia variabile dopo
\[
d H(u)[h]=-\Omega(X_H(u), h).
\]
This gives $X_H(u)=- i\nabla_{\overline{u}}H(u,\overline{u})$ and, according to previous notation
\[ X_H(U):= \begin{pmatrix} -i\nabla_{\bar u}H(u,\bar u)\\ i\nabla_uH(u,\bar u) \end{pmatrix}, \qquad U=\begin{pmatrix}u\\ \bar u\end{pmatrix}. \]
This $2$-form also induces the Poisson structure
\[
\{F, G\}=i\sum_n(\partial_{u_n}F \partial_{\overline{u}_n}G- \partial_{\overline{u}_n}F\partial_{u_n}G )=\Omega(X_F, X_G).
\]

The quasilinear Schr\"odinger equation \eqref{NLS} possesses a Hamiltonian structure on the phase space $\mathcal{H}^s$. Indeed, considering the energy Hamiltonian
\begin{equation}\label{Ham:NLS}
	H(u,\overline{u}) = \int_{\T^2} |\nabla u |^2\,dx +\frac{1}{2} \int_{\T^2} |u |^4\,dx + \frac{1}{2}\int_{\T^2} \big|\nabla \big(h(|u|^2)\big) \big|^2 dx\,,
\end{equation}
\eqref{NLS} can be rewritten as
\[
\begin{cases}
	\mathrm{i}\partial_t u = \nabla_{\overline{u}} H(u, \overline{u}),\\[1ex]
	-\mathrm{i}\partial_t \overline{u} = \nabla_{{u}} H(u, \overline{u}),
\end{cases} 
\] 
where $\nabla_{\overline{u}} = \frac{1}{2}(\partial_{\Re(u)} + \mathrm{i}\partial_{\Im(u)})$ and $\nabla_{{u}} = \frac{1}{2}(\partial_{\Re(u)} - \mathrm{i}\partial_{\Im(u)})$
are the standard Wirtinger derivatives. 

In the Fourier expansion \eqref{complex-uU}, the Hamiltonian $H$ reads as 
%\red{H ha $2$ variabili ma le altre hamiltoniane no}
$$H(u,\overline{u})=H^{(2)}(u,\overline{u})+H^{(4)}(u,\overline{u})+H^{(\geq 8)}(u,\overline{u}),$$
where 
{\begin{equation}\label{ham}
\begin{aligned}
H^{(2)}(u,\overline{u})&=\sum_{n\in \Z^2} |n|^2 |u_n|^2, \\
H^{(4)}(u,\overline{u})&=\frac{1}{8\pi^2} \sum_{n_1-n_2+n_3-n_4=0} u_{n_1} \overline{u_{n_2}} u_{n_3} \overline{u_{n_4}},\\
H^{(\geq 8)}(u, \bar{u})&=\frac{1}{2}\int_{\T^2} \big|\nabla \big(h(|u|^2)\big) \big|^2 dx\,.
\end{aligned}
\end{equation}}
 We point out that the restriction $n_1-n_2+n_3-n_4=0$ for the monomials in $H^{(4)}$ arises from the conservation of momentum.

\section{Finite dimensional truncation and unstable trajectories}\label{sec:toy}
In this section we consider a suitable finite dimensional (in Fourier space) approximation of \eqref{NLS} which exhibits unstable trajectories. Our analysis is essentially based on the seminal paper \cite{CKSTT}, with some adaptations that will be discussed below. We start by recalling the main ideas.

%\red{The resonant terms associated to the degree four part of the Hamiltonian are given by the relation $\omega(n_1,n_2,n_3,n_4)=0$, where 
%$$\omega(n_1,n_2,n_3,n_4)=|n_1|^2-|n_2|^2+|n_3|^2-|n_4|^2.$$
%The condition $\omega(n_1,n_2,n_3,n_4)=0$, combined with the restriction $n_1-n_2+n_3-n_4=0$ arising from momentum conservation, readily implies that the $4$-wave resonant modes form (possibly degenerate) rectangles in $\Z^2$.

The effective finite dimensional system will be extracted from the restriction of the resonant part of the quartic Hamiltonian to a subspace of the form
\[
u_n=0 \qquad n\notin \Lambda,
\]
for an appropriately chosen finite set $\Lambda\subset \Z^2$.

We point out that the resonant part of $H^{(4)}$ is given by the sum of monomials $u_{n_1} \overline{u_{n_2}} u_{n_3} \overline{u_{n_4}}$ satisfying
\begin{equation}\label{eq:disp_mom}
\begin{cases}
\omega(n_1,n_2,n_3,n_4):=|n_1|^2-|n_2|^2+|n_3|^2-|n_4|^2=0,\\
n_1-n_2+n_3-n_4=0\,.
\end{cases}
\end{equation}
Any four wave-vectors $n_1, n_2, n_3, n_4$ fulfilling the above conditions form a (possibly degenerate) rectangle in the $\Z^2$-lattice. The non-degenerate four-wave resonances will serve as the building blocks of the set $\Lambda$, described below following \cite{CKSTT} and later refinements from \cite{GHHMP}.

\subsection{Lambda set}
The set $\Lambda$ is the union of $N$ subsets of $\Z^2$
\begin{equation}\label{eq:LLn}
\Lambda=\Lambda_1\cup \Lambda_2\cup\dots\cup\Lambda_N,
\end{equation}
where $N\in\mathbb{N}$ will be set later, and the cardinality of each set $\Lambda_j$ is $2^{N-1}$. In order to state the other properties of $\Lambda$, we need some definitions. First, for a fixed $s>1$, we define
\begin{equation}\label{weight}
	S_{\Lambda_j}:=\sum_{n\in \Lambda_j} |n|^{2s}, \qquad j=1, \dots, N.
\end{equation}
Moreover, we say that $(n_1, n_2, n_3, n_4)\in(\Z^2)^4$ is a \emph{nuclear family} if $n_1, n_3 \in \Lambda_i$ and $n_2, n_4 \in \Lambda_{i+1}$ for some $i=1, \ldots, N-1$ and they form a non-degenerate rectangle in $\mathbb{Z}^2$.

\begin{theorem}\label{thm:set}
	Fix $\eta>0$ small enough, and let $s>1$. For any $N$ large enough, there exists a set $\Lambda\subset\Z^2$, of the form \eqref{eq:LLn}, with the following properties. 
	\begin{itemize}
		\item[$(P1)$] (Closure): If ${n}_1, {n}_2, {n}_3\in\Lambda$ are three vertices of a rectangle then the fourth vertex belongs to $\Lambda$ too.
		\item[$(P2)$] (Existence and uniqueness of spouse and children): For each $1\le i\le N-1$  and every ${n}_1\in\Lambda_i$ there exists a unique spouse ${n}_3\in \Lambda_i$ and unique (up to trivial permutations) children ${n}_2, {n}_4\in\Lambda_{i+1}$ such that $({n}_1, {n}_2, {n}_3, {n}_4)$ is a nuclear family in $\Lambda$.
		\item[$(P3)$] (Existence and uniqueness of parents and sibling): For each $1\le i\le N-1$  and every ${n}_2\in\Lambda_{i+1}$ there exists a unique sibling ${n}_4\in\Lambda_{i+1}$ and unique (up to trivial permutations) parents ${n}_1, {n}_3\in\Lambda_{i}$ such that $({n}_1, {n}_2, {n}_3, {n}_4)$ is a nuclear family in $\Lambda$.
		\item[$(P4)$] (Non-degeneracy): A sibling of any mode $m$ is never equal to its spouse.
		\item[$(P5)$] (Faithfulness): Apart from nuclear families, $\Lambda$ contains no other non-degenerate parallelograms.
        \item[$(P6)$] (Bound on the modes): There exist  $\alpha_0>1$ and $C>0$, independent of $(\eta,N)$, such that for any $\alpha\geq \alpha_0$ and a suitable
    \begin{equation}\label{Rmax}
		\mathbf{R}\in [ \exp(\alpha^N),  \exp(2(1+\eta) \alpha^N)],
	\end{equation}
we have the bound
\begin{equation}\label{size:mode2}
\mathbf{R}_{\mathrm{min}}:= C^{-1} \mathbf{R}  \le |n|\le  C 3^N \mathbf{R}:=\mathbf{R}_{\mathrm{max}} \qquad \forall n\in \Lambda.
	\end{equation}
\item[$(P7)$] (Sobolev norm explosion): We have the estimate
\begin{equation}\label{eq:lower_bound}\frac{S_{\Lambda_{N-2}}}{S_{\Lambda_3}}> 2^{(N-6)(s-1)}\,, \qquad \frac{S_{\Lambda_j}}{S_{\Lambda_i}}\lesssim e^{s N} \qquad 1\le i\le j\le N.
	\end{equation}
    \end{itemize}
\end{theorem}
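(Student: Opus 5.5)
This result is essentially the combination of the construction in \cite{CKSTT} with the refinements of \cite{GK2015, GHHMP}, and I would prove it by retracing that construction. The plan is to build a prototype set in the complex plane first and then realize it in $\Z^2$ by a generic placement argument.

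\textbf{Step 1: the prototype set.} I take the combinatorial prototype $\Sigma=\Sigma_1\cup\dots\cup\Sigma_N\subset\C$. Here $\Sigma_1=\{1,\ii\}$ up to a rotation and a generic perturbation, and $\Sigma_{j}$ consists of $2^{N-1}$ points parametrized by $\{0,1\}^{N-1}$. Passing from $\Sigma_i$ to $\Sigma_{i+1}$, each pair of spouses $\{z_1,z_3\}$ produces the children $\frac{1+\ii}{2}z_1+\frac{1-\ii}{2}z_3$ and its conjugate combination. These are exactly the other two vertices of the square having $z_1,z_3$ as a diagonal. This construction gives (P2), (P3) and (P4) by design.

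\textbf{Step 2: placement in the plane.} I choose a generic placement of $\Sigma$ in $\C$, that is, one outside a finite union of proper algebraic subvarieties. A genericity argument then shows that the only rectangles with vertices in $\Sigma$ are the nuclear squares. This gives (P1) and (P5).

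\textbf{Step 3: passage to $\Z^2$.} The prototype has rational (Gaussian-rational) coordinates. Multiplying by a large integer $\mathbf R$ turns it into a lattice set, and multiplication preserves rectangles. A small perturbation keeps the points in $\Z^2$ while preserving all the exact square relations. I follow the quantitative version in \cite{GK2015} (see also \cite{GHHMP}). There the dilation factor is taken as a product of primes of size comparable to $\alpha^N$, chosen to clear denominators and to exclude the finitely many spurious rectangles. This yields $\mathbf R\in[\exp(\alpha^N),\exp(2(1+\eta)\alpha^N)]$.

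\textbf{Step 4: the size bound (P6).} Each generation changes moduli by at most a factor $\sqrt2$ relative to its parents, since $|\frac{1\pm\ii}{2}|=\frac1{\sqrt2}$ and the points stay in an annulus. After normalization, all points therefore lie between $C^{-1}\mathbf R$ and $C3^N\mathbf R$, which is (P6).

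\textbf{Step 5: norm explosion (P7) — the main obstacle.} I would follow the CKSTT argument. Most points of $\Sigma_j$ for intermediate $j$ have modulus comparable to that of a typical point of the generation. The count of $\Sigma_j$ is constant at $2^{N-1}$, but the modulus distribution spreads out. The key identity is $|z_1|^2+|z_3|^2=|z_2|^2+|z_4|^2$ for each nuclear family, which by itself gives only conservation of the $s=1$ weight. For $s>1$, convexity of $x\mapsto x^s$ applied to $(|n|^2)^s$ shows that spreading increases $\sum|n|^{2s}$. CKSTT quantify this using the fact that the generic placement makes the children asymmetric in modulus: one child has larger modulus and one smaller, so weights shift toward the large end.

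The estimate I would aim for is $S_{\Lambda_{j+1}}\ge 2^{s-1}S_{\Lambda_j}$ for a proportion of generations, which gives the ratio $2^{(N-6)(s-1)}$ between $j=3$ and $j=N-2$. I expect proving this lower bound to be the hardest step. I would argue it via the prototype model in which $\Sigma_j$ is dominated by a single largest element, where the count is exact for the idealized, unperturbed configuration; perturbation stability of that count then handles the generic placement.

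\textbf{Step 6: the upper bound in (P7).} This follows directly from (P6): $S_{\Lambda_j}/S_{\Lambda_i}\le (C^2 3^N)^{2s}\lesssim e^{sN}$ after adjusting constants. Strictly, this gives $e^{2s(\log 3)N+Cs}$, so the exponent $e^{sN}$ would need the sharper modulus control from the prototype.
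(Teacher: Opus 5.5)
The paper does not prove this statement itself; it quotes it from \cite[Theorem~7.3]{GHHMP}, which combines the construction of \cite{CKSTT} with the quantitative bounds of \cite{GK2015}. Your reconstruction, however, has a step that fails.

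In Steps 1--2 you force every nuclear family to be the square $c_+=az_1+\bar a z_3$, $c_-=\bar a z_1+a z_3$, with $a=\frac{1+\ii}{2}$. The only free parameters are then the first-generation points, and some non-nuclear parallelograms hold \emph{identically}. Take two first-generation families $(x,y)$ and $(x',y')$ that differ only in the second coordinate of the combinatorial model. With the model's labelling, their children pair up as spouses $(c_+,c'_+)$ and $(c_-,c'_-)$ in $\Lambda_2$. Let $d=\bar a c_++a c'_+$ and $d'=a c_-+\bar a c'_-$, both in $\Lambda_3$. Since $2a\bar a=1$ and $a^2+\bar a^2=0$,
\[
d+d'=x+y',\qquad d-d'=\ii\,(x'-y),
\]
for every choice of $x,y,x',y'$. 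For generic data these four points form a non-degenerate parallelogram (not even a rectangle) with vertices in $\Lambda_1\cup\Lambda_3$. So (P5) fails however you place the first generation: the bad set is the whole parameter space, not a proper subvariety, and genericity cannot remove it. Note also that your Step 2 only argues about rectangles, which would not give the parallelogram version of (P5) in any case.

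What is missing is an independent rectangle for each family: children $\frac{n_1+n_3}{2}\pm e^{\ii\phi}\frac{n_1-n_3}{2}$ with $\phi$ chosen generically (Gaussian-rational points are dense on each such circle). The identity above then holds only for special values of the angles. Faithfulness for generic choices is what \cite{CKSTT,GHHMP} prove.

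The quantitative properties are not established either.

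\emph{Step 4.} A rectangle bounds the children only from above: $|c_\pm|^2\le |z_1|^2+|z_3|^2$. Children can be arbitrarily small; with your rule, $z_3=\ii z_1$ gives $c_-=0$. The explosion mechanism needs exactly this, so the lower bound $|n|\geq C^{-1}\mathbf R$ in (P6) does not follow from your argument.

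\emph{Step 5.} The configuration to perturb is not one dominated by a single element. It is the degenerate placement with first generation $r\prod_j w_j$, $w\in\{1,\ii\}^{N-1}$. There, nonzero spouses are orthogonal with equal moduli, one child is $0$, and the other is $(1+\ii)$ times a parent. Hence $\Lambda_j$ consists of $2^{N-j}$ points of modulus $2^{(j-1)/2}r$ plus points at the origin, so $S_{\Lambda_j}=2^{N-1}r^{2s}2^{(j-1)(s-1)}$ and the ratio is $2^{(N-5)(s-1)}$. What remains is to perturb this non-injective configuration into one that is simultaneously faithful and integral, keeps every point at distance $\gtrsim 3^{-N}\max|n|$ from the origin, and loses at most a factor $2^{s-1}$. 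Your sketch does not address this. Rescaling by $\mathbf R$ cannot exclude anything, since rectangles and parallelograms are dilation invariant.

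\emph{Step 6.} For the upper bound in (P7), do not go through (P6). By (P2)--(P3), $E=\sum_{\Lambda_i}|n|^2$ is independent of $i$. The power-mean inequality then gives $S_{\Lambda_j}/S_{\Lambda_i}\le \big(2^{N-1}\max_{\Lambda_j}|n|^2/E\big)^{s-1}$. Moduli grow by at most $\sqrt2$ per generation, so if $\Lambda_1$ has comparable moduli this is $\lesssim 2^{N(s-1)}\le e^{sN}$.
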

\begin{proof}
It follows by \cite[Theorem 7.3]{GHHMP}.
\end{proof}

%\begin{remark}
%Property (P5) is slightly more general than the corresponding stated in \cite{CKSTT}, still follows from the same construction, as observed in \cite{GHHMP}.
%\end{remark}
From now on we set
\begin{equation}\label{choose_M}
	M:=\mathbf{R}_{\mathrm{max}}.
\end{equation}

{

\subsection{Toy model} We now introduce the Hamiltonian that allows to construct approximate solutions of \eqref{NLS} exhibiting an arbitrarily large growth of high-order Sobolev norms over a finite time interval $[0,T]$. Its dynamics will be Fourier-supported, for all times, on the set $\Lambda$ provided by Theorem \ref{thm:set}. In the next sections, we will show that this Hamiltonian indeed provides a good approximation of the full NLS dynamics up to the instability time $T$.

To define it, we first split the quartic Hamiltonian of \eqref{NLS} as
\[
H^{(4)}=\sum_{k=0}^4 H^{(4,k)},
\]
where
\[
H^{(4,k)}(u,\bar u)
:=\frac{1}{8\pi^2}
\sum_{(n_1,n_2,n_3,n_4)\in\mathcal A_k}
u_{n_1}\overline{u_{n_2}}u_{n_3}\overline{u_{n_4}},
\]
\begin{equation}\label{Ak}
\begin{split}
\mathcal A_k=
\Bigl\{
(n_1,n_2,n_3,n_4)\in(\mathbb Z^2)^4:&\,
n_1-n_2+n_3-n_4=0,\text{exactly }k\text{ modes belong to }\mathbb Z^2\setminus\Lambda
\Bigr\}.
\end{split}
\end{equation}
We point out that $H^{(4,1)}$ couples the Fourier modes in $\Lambda$ with those outside $\Lambda$. In Section \ref{sec:wb} we will construct a change of coordinates eliminating $H^{(4,1)}$, up to a higher degree reminder. This naturally leads to consider the truncated quartic Hamiltonian
\begin{equation}\label{calH4}
\mathcal H^{(4)}
=
H^{(4,0)}+H^{(4,\geq2)},
\end{equation}
where
\[
H^{(4,\geq2)}
=
H^{(4,2)}+H^{(4,3)}+H^{(4,4)}.
\]
The effective Hamiltonian studied in this section is therefore
\[
H^{(2)}+\mathcal H^{(4)}.
\]

\begin{theorem}\label{thm:toy_model_scaled}
	Let $\lambda > 0$ and fix $\gamma_0 > 0$ large enough. There exist constants $\sigma,\mathbb{K} > 0$ (independent of $\gamma_0$) such that, for any $N$ large enough, the following holds: there exists a time $T_0$ satisfying
	\begin{equation}\label{T0}
		0 < T_0 \leq \mathbb{K} \gamma_0 N^2,
	\end{equation} 
	and a solution 
\begin{equation}\label{eq:scaled_sol}
v^\lambda(t,x)=\frac{1}{2\pi}\sum_{n\in\Lambda}v^\lambda_n(t)e^{\mathrm{i} n\cdot x}   
	\end{equation}
    of the Hamiltonian $H^{(2)}+\mathcal{H}^{(4)}$ defined for $t \in [0, T]$, with 
	\begin{equation}\label{def:T}
		T=\lambda^2 T_0,
	\end{equation}
which satisfies the energy transfer properties
	\begin{align}
		\label{b1}|v_n^\lambda(0)| &> \lambda^{-1}(1 - e^{-\gamma_0 N \sigma})  &&n\in \Lambda_3, \\\label{b1<}
        |v_n^\lambda(0)| &< \lambda^{-1} e^{-\gamma_0 N \sigma}  &&n\in \Lambda_j \,\,\mathrm{ for }\,\, j \neq 3,\\
	\label{b2}	|v_n^\lambda(T)| &> \lambda^{-1}(1 - e^{-\gamma_0 N \sigma})  &&n\in \Lambda_{N-2}, \\\label{b2<}
    |v_n^\lambda(T)| &< \lambda^{-1} e^{-\gamma_0 N \sigma}  &&n\in \Lambda_j\,\, \text{ for }\,\, j \neq N-2.
    \end{align}
	Moreover, we have the following bounds
\begin{equation}\label{eq:l1_bound_scaled}
		\sup_{t \in [0, T]} \|v^\lambda(t)\|_{\ell^1} \le N 2^{N-1} \lambda^{-1}.
	\end{equation}
	\begin{equation}\label{eq:Ms_bound}
		\sup_{t \in [0, T]} \|v^\lambda(t)\|_{s,M} \lesssim  \sqrt{N2^{N-1}} \lambda^{-1} M^s.
	\end{equation}
\end{theorem}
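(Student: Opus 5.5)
The plan is to reduce to the CKSTT Toy Model and then use the cubic scaling. First I will show that the subspace $\{u_n=0,\ n\notin\Lambda\}$ is invariant for the flow of $H^{(2)}+\mathcal H^{(4)}$. Monomials of $H^{(4,k)}$ with $k\ge2$ have at least two factors outside $\Lambda$, so their derivatives vanish on this subspace. The term $H^{(4,1)}$, which could excite modes off $\Lambda$, has been removed from $\mathcal H^{(4)}$. Hence the restricted dynamics is that of $H^{(2)}+H^{(4,0)}$ on $\C^\Lambda$, a finite-dimensional ODE, so any solution starting in $\Lambda$ stays Fourier-supported on $\Lambda$.

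Next I remove the quadratic part with the gauge $v_n=e^{-\ii|n|^2t}b_n$. The quartic part then splits into resonant and nonresonant monomials. By (P1), (P5) and the rectangle characterization \eqref{eq:disp_mom}, the only resonant quadruples within $\Lambda$ are nuclear families and degenerate ones ($\{n_1,n_3\}=\{n_2,n_4\}$). Here I run into a subtlety: $H^{(4,0)}$ contains all momentum-conserving quadruples in $\Lambda$, not only resonant ones. (P5) excludes any other non-degenerate parallelograms in $\Lambda$, and degenerate parallelograms are automatically resonant. So every momentum-conserving quadruple in $\Lambda$ is either a nuclear family or degenerate, and is therefore resonant. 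The gauged system is thus exactly the resonant system of \cite{CKSTT}.

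Using (P2)--(P4) and the symmetric ansatz $b_n=b_j$ for all $n\in\Lambda_j$ (invariant by the uniqueness properties), one gets, after a further phase rotation removing the mass term, the Toy Model
\[
\ii\dot b_j=-|b_j|^2b_j+2\bar b_j\big(b_{j-1}^2+b_{j+1}^2\big),\qquad j=1,\dots,N.
\]
The normalization constants ($1/8\pi^2$, combinatorial factors) are absorbed by a fixed rescaling of $b$. I then invoke the CKSTT / Guardia–Kaloshin instability theorem for this system, in the form used in \cite{GHHMP}. Given $\gamma_0$ large, it provides a solution $b(t)$ and a time $T_0\le \mathbb K\gamma_0N^2$ with $|b_3(0)|>1-e^{-\gamma_0N\sigma}$, $|b_j(0)|<e^{-\gamma_0N\sigma}$ for $j\ne3$, and the analogous statement at $T_0$ with index $N-2$. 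It also preserves the mass $\sum_j|b_j|^2=1$.

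Then I scale. If $b$ solves the cubic system, so does $b^\lambda(t)=\lambda^{-1}b(\lambda^{-2}t)$. Undoing the gauges gives $v^\lambda_n(t)$ with $|v^\lambda_n(t)|=\lambda^{-1}|b_j(\lambda^{-2}t)|$ for $n\in\Lambda_j$, defined on $[0,\lambda^2T_0]$. This yields \eqref{b1}--\eqref{b2<} directly.

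For the bounds, mass conservation gives $|b_j|\le1$. Summing over the $N2^{N-1}$ modes gives \eqref{eq:l1_bound_scaled}. For \eqref{eq:Ms_bound}, (P6) and \eqref{choose_M} give $\langle n\rangle_M\le\sqrt2M$ on $\Lambda$, so $\|v^\lambda\|_{s,M}^2\le 2^sM^{2s}\,N2^{N-1}\lambda^{-2}$.

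The main obstacle is the careful identification of $H^{(4,0)}$ with the resonant CKSTT Hamiltonian, with exact constants and the multiplicities of degenerate terms, so that the known Toy Model theorem applies verbatim. Beyond that the proof reduces to citing the existing instability result.
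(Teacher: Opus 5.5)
Your proposal follows the paper's proof essentially step by step: invariance of $\mathcal U_\Lambda$, the gauge removing $H^{(2,0)}$ and the mass term, the intragenerational reduction to the CKSTT Toy Model, the Guardia--Kaloshin orbit, the cubic rescaling $b_j^\lambda(t)=\lambda^{-1}b_j(\lambda^{-2}t)$, and the same counting for the $\ell^1$ and $\|\cdot\|_{s,M}$ bounds. Your use of (P5) to show that every momentum-conserving quadruple in $\Lambda$ is resonant makes explicit what the paper uses implicitly when it writes the time-independent $H_{TM}$; one cosmetic point is that the normalization constants should be absorbed by a time reparametrization, as in the paper, rather than by an amplitude rescaling, since the latter changes the exact factor $\lambda^{-1}$ in \eqref{b1}--\eqref{eq:l1_bound_scaled} by a fixed constant (the overall sign of your Toy Model relative to the paper's is harmless, via complex conjugation).
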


\begin{proof}
We look for solutions which are Fourier supported on a set $\Lambda$ as in Theorem \ref{thm:set}. To this aim, we consider the Hamiltonian system $H^{(2)}+\mathcal{H}^{(4)}$ restricted to the subspace
\begin{equation}
	\mathcal{U}_{\Lambda} = \big\{ u \in \ell^1(\mathbb{Z}^2; \mathbb{C}) : u_n = 0 \text{ if } n \notin \Lambda \big\}.
\end{equation}
Decomposing the quadratic part of the NLS Hamiltonian as
\[
H^{(2)}
:=
H^{(2,0)}+H^{(2,2)},
\]
where
\[
H^{(2,0)}(u,\bar u)
:=
\sum_{n\in\Lambda}|n|^2|u_n|^2,
\qquad
H^{(2,2)}(u,\bar u)
=
\sum_{n\notin\Lambda}|n|^2|u_n|^2,
\]
we get that the restricted dynamics is governed by the Hamiltonian $H^{(2, 0)}+H^{(4, 0)}$, since the vector field of $H^{(2, 2)}+H^{(4, \geq 2)}$ vanishes on $\mathcal{U}_{\Lambda}$.

Observe moreover that $H^{(2, 0)}+H^{(4, 0)}$ Poisson commutes with the mass Hamiltonian. Hence, if $u(t)$ is a solution to $H^{(2, 0)}+H^{(4, 0)}$ then
\[
z(t):=e^{-\mathrm{i}t\left(\Delta- \frac{1}{2\pi^2} \| \Pi_{\Lambda} u(0)\|_{L^2}^2\right)} u(t)
\]
formally solves the Toy Model system associated with the Hamiltonian function
\[
H_{TM}(z, \bar{z})=-\frac{1}{8\pi^2}\sum_{n\in \Lambda} |z_n|^4+\frac{1}{8\pi^2}\sum_{\substack{n_1-n_2+n_3-n_4=0\\ n_k\in \Lambda,\\ n_1\neq n_2, n_4}} z_{n_1} \overline{z_{n_2}} z_{n_3} \overline{z_{n_4}}.
\]
Arguing as in \cite[Corollary 3.2]{GK2015} (see also the discussion in \cite[Section 2.2]{CKSTT}), we can assume that $\Lambda$ satisfies the so-called intragenerational equality, namely that the subspace
\begin{equation}
\widetilde{\mathcal{U}}_{\Lambda} = \big\{ c \in \mathcal{U}_{\Lambda} : c_n = c_{n'} \quad \forall n, n' \in \Lambda_j, \ j=1, \dots, N \big\}
\end{equation}
is left invariant by the flow of $H_{TM}$. Then, setting 
\[
b_j(t)=z_n(t) \qquad \forall n\in \Lambda_j, \quad j=1, \dots, N,
\]
the Toy model system reduces, after an harmless time re-parametrization, to the following nearest-neighbor system of ODEs
\begin{equation}\label{eq:toy_ode}
	\dot{b}_j = -\mathrm{i} |b_j|^2 b_j + 2\mathrm{i} \overline{b}_j (b_{j-1}^2 + b_{j+1}^2), \quad j=1,\dots, N.
\end{equation}
Theorem 3 in \cite{GK2015} guarantees the existence of $\sigma>0$ and an orbit $(b_j(t))_{j=1, \dots, N}$ of \eqref{eq:toy_ode} satisfying
\begin{align}
		\label{b1'}|b_3(0)| > (1 - e^{-\gamma_0 N \sigma}), \quad &|b_j(0)| < e^{-\gamma_0 N \sigma} \text{ for } j \neq 3, \\
	\label{b2'}	|b_{N-2}(T_0)| >(1 - e^{-\gamma_0 N \sigma}), \quad &|b_j(T_0)| <  e^{-\gamma_0 N \sigma} \text{ for } j \neq N-2\\
	\label{b3'}	 \sum_{j=1}^N |b_j(t)|^2=1, \quad &\forall t\in [0, T_0]
	\end{align}
    with $T_0$ as in \eqref{T0}. By exploiting the scaling associated with the cubic equation \eqref{eq:toy_ode}, we can consider the scaled solution
    \begin{equation}\label{blj}
    b_j^{\lambda}(t)=\lambda^{-1} b_j(\lambda^{-2} t), \qquad t\in [0, T], \qquad j=1, \dots, N.
    \end{equation}
 In particular, the function
\begin{equation}\label{eq:scaled_sol-2}
v^\lambda(t,x)=\frac{1}{2\pi}\sum_{n\in\Z^2} v^\lambda_n(t)e^{\mathrm{i} n\cdot x},   
	\end{equation}
    with 
    \begin{equation}\label{vlj}
v_n^\lambda(t) = \begin{cases}
b_j^\lambda(t) e^{-\mathrm{i} t\big(|n|^2 + (2\pi^2)^{-1} \| v^{\lambda}(0)\|_{L^2}^2\big)} & \text{if } n \in \Lambda_j, \\
			0 & \text{if } n \notin \Lambda,
		\end{cases}
\end{equation}
is a solution to $H^{(2)}+\mathcal{H}^{(4)}$. Estimates \eqref{b1}-\eqref{b2<} follows by \eqref{b1'}-\eqref{b2'}. 

In order to prove the remaining bounds, we first observe that
  \begin{equation}\label{b3}	\sup_{t\in[0,T]}|v_n^\lambda(t)|\leq\lambda^{-1},\qquad\, n\in \Lambda,
    \end{equation}
as follows from \eqref{b3'}, \eqref{blj} and \eqref{vlj}.

    Using \eqref{b3} and the fact that each generation $\Lambda_j$ has cardinality $2^{N-1}$, we obtain
\[
\| v^{\lambda}(t)\|_{\ell^1}=\sum_{j=1}^N \sum_{n \in \Lambda_j} |v_n^{\lambda}(t)|\le \lambda^{-1} 2^{N-1} N,
\]
which proves the bound \eqref{eq:l1_bound_scaled}.

Recalling the definition \eqref{norma_sM} of the $\|\cdot\|_{s,M}$-norm and \eqref{size:mode2}, we similarly have
	\begin{equation*}
		\|v^\lambda(t)\|_{s,M}^2 = \sum_{j=1}^N \sum_{n \in \Lambda_j} |v_n^\lambda(t)|^2 (M^2 + |n|^2)^s\le N 2^{N-1} \lambda^{-2}({M}^2 + \mathbf{R}_{\mathrm{max}}^2)^s.
	\end{equation*}
Since we chose $M = \mathbf{R}_{\mathrm{max}}$, the above estimate readily implies \eqref{eq:Ms_bound}, concluding the proof.
\end{proof}

From now on, we set
\begin{equation}\label{choose_lambda}
\lambda=\mathbf{R}^{s_{\ast}},
\end{equation}
where $s_*$ is a parameter that will be chosen later.

As a consequence of the above result, we can show that the Hamiltonian $H^{(2)}+\mathcal{H}^{(4)}$ admits strong unstable solutions in the $H^s$-topology, for $s$ sufficiently large.

\begin{cor}\label{co:unstable_toy}
	Let $s \geq 7$. Fix $\mu > 0$ sufficiently small and $\mathcal{K} > 0$ sufficiently large. There exist $\gamma_0>0$ and $s_*$ of the form
\begin{equation}\label{forma_star}s_{*}=s+\delta,\quad\delta>0,\,\delta=o(1)\mbox { as }N\to\infty,
\end{equation}
    such that the solution $v^{\lambda}$ to $H^{(2)}+\mathcal{H}^{(4)}$ constructed in Theorem \ref{thm:toy_model_scaled} satisfies, for $N$ large enough, 
	\begin{equation}\label{toy:high_growth}
		\| v^{\lambda}(0) \|_{H^s} \sim \mu, \qquad \| v^{\lambda}(T) \|_{H^s} > \mathcal{K},
	\end{equation}
	\begin{equation}\label{toy:low_smallness}
		\sup_{t \in [0, T]} \| v^\lambda(t) \|_{s_0, M}=o(1)\mbox{ as }N\to\infty,\qquad \forall s_0\le s-1.
	\end{equation}
\end{cor}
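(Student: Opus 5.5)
We compute the three norms directly from the explicit form \eqref{vlj} of $v^\lambda$, using the mode bounds of $(P6)$, the ratio bounds of $(P7)$ and the choice $\lambda=\mathbf{R}^{s_*}$. The parameters $\gamma_0$, $N$ and $\delta$ are then tuned so that the initial norm is of size $\mu$ and the final norm exceeds $\mathcal K$.

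\emph{Initial norm.} By \eqref{b1}--\eqref{b1<}, the mass of $v^\lambda(0)$ is concentrated on $\Lambda_3$, so
\[
\|v^\lambda(0)\|_{H^s}^2=\lambda^{-2}\Big(S_{\Lambda_3}\big(1+O(e^{-\gamma_0N\sigma})\big)+O\big(e^{-2\gamma_0N\sigma}\big)\sum_{j\neq3}S_{\Lambda_j}\Big).
\]
The second bound in \eqref{eq:lower_bound} gives $\sum_{j\ne 3}S_{\Lambda_j}\lesssim N e^{sN}S_{\Lambda_3}$. Choosing $\gamma_0$ large, so that $e^{-2\gamma_0N\sigma}Ne^{sN}\ll1$, we get $\|v^\lambda(0)\|_{H^s}\sim \lambda^{-1}S_{\Lambda_3}^{1/2}$. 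By \eqref{size:mode2}, $S_{\Lambda_3}^{1/2}$ lies between $2^{(N-1)/2}C^{-s}\mathbf R^s$ and $2^{(N-1)/2}(C3^N)^s\mathbf R^s$. Set $\lambda=\mathbf R^{s+\delta}$ and define $\delta$ by requiring
\[
\mathbf R^{\delta}=\mu^{-1}S_{\Lambda_3}^{1/2}\mathbf R^{-s}.
\]
Then $\|v^\lambda(0)\|_{H^s}\sim\mu$. Since $\log\mathbf R\ge\alpha^N$ by \eqref{Rmax}, we have
\[
\delta=\frac{O(N)+\log\mu^{-1}}{\log\mathbf R}=o(1)\quad\text{as } N\to\infty,
\]
which is \eqref{forma_star}. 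To ensure $\delta>0$ we may need to check the sign; if necessary, we absorb a constant by using that $\mu$ is small.

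\emph{Final norm.} By \eqref{b2}, the same computation at time $T$ gives $\|v^\lambda(T)\|_{H^s}\gtrsim\lambda^{-1}S_{\Lambda_{N-2}}^{1/2}$. Combined with the first inequality of \eqref{eq:lower_bound},
\[
\frac{\|v^\lambda(T)\|_{H^s}}{\|v^\lambda(0)\|_{H^s}}\gtrsim 2^{(N-6)(s-1)/2}.
\]
Hence $\|v^\lambda(T)\|_{H^s}\gtrsim\mu\,2^{(N-6)(s-1)/2}>\mathcal K$ once $N\gtrsim 1+\log(\mathcal K/\mu)/(s-1)$. This is also the relation that later produces the time bound \eqref{time_bound}.

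\emph{Low norms.} The estimate \eqref{eq:Ms_bound} holds with any $s_0$ in place of $s$, giving
\[
\sup_t\|v^\lambda(t)\|_{s_0,M}\lesssim \sqrt{N2^{N-1}}\,\lambda^{-1}M^{s_0}.
\]
Since $M=C3^N\mathbf R$ and $\lambda=\mathbf R^{s+\delta}$, this is bounded by
\[
\sqrt{N2^{N}}\,(C3^N)^{s_0}\,\mathbf R^{s_0-s-\delta}\le \sqrt{N2^N}\,(C3^N)^{s}\,\mathbf R^{-1}.
\]
Because $\mathbf R\ge\exp(\alpha^N)$ grows doubly exponentially while the prefactor grows only exponentially in $N$, the right-hand side is $o(1)$ as $N\to\infty$, uniformly in $s_0\le s-1$.

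The main obstacle is keeping all the choices consistent. The $N$-dependent prefactors, namely the factors $3^{Ns}$ from $M$ and the cardinalities $2^N$, must be absorbed by powers of $\mathbf R$, while $\delta$ must still be $o(1)$ and positive. The key point making this work is that $\log\mathbf R\ge\alpha^N$ dominates every $O(N)$ logarithmic loss.
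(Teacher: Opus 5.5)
Your proposal is correct and follows essentially the same route as the paper: concentrate the $H^s$ norm on $\Lambda_3$ (resp.\ $\Lambda_{N-2}$) via \eqref{b1}--\eqref{b2<}, with $\gamma_0$ large enough that $e^{-2\gamma_0N\sigma}Ne^{sN}\ll 1$. You then fix $\lambda=\mathbf R^{s+\delta}$ through $\lambda^{-2}S_{\Lambda_3}\sim\mu^2$ and \eqref{size:mode2}, obtain the growth from the first bound in \eqref{eq:lower_bound}, and control the low norms by $\mathbf R^{s_0-s_*}\le\mathbf R^{-1}$ against the only exponentially growing prefactors. The sign check you left open is immediate, since your definition gives $\mathbf R^{\delta}\ge\mu^{-1}2^{(N-1)/2}C^{-s}>1$ for $N$ large, hence $\delta>0$.
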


\begin{proof}
We start by proving the first estimate in \eqref{toy:high_growth}. By \eqref{b1} and \eqref{eq:lower_bound} we have that
\begin{align*}
\| v^{\lambda} (0)\|_{H^s}^2 &\le \lambda^{-2} S_{\Lambda_3}+\lambda^{-2} e^{-2\gamma_0 N \sigma} \sum_{j\neq 3} S_{\Lambda_j}\le \lambda^{-2} S_{\Lambda_3} \left( 1+e^{-2\gamma_0 N \sigma}\sum_{j\neq 3} \frac{S_{\Lambda_j}}{S_{\Lambda_3}}  \right)\\
&\le \lambda^{-2} S_{\Lambda_3} \left( 1+C(N-1)e^{-2\gamma_0 N \sigma} e^{sN}  \right).
\end{align*}
By choosing, for instance,
\[
\gamma_0=\frac{s}{2\sigma}+1,
\]
and taking $N$ large enough, we get
\[
\| v^{\lambda} (0)\|_{H^s}^2\le \frac{3}{2} \lambda^{-2} S_{\Lambda_3}.
\]
Next, we need a lower bound for the $H^s$-norm of the initial datum. Taking $N$ large enough we have
\[
\| v^{\lambda}(0)\|_{H^s}^2\geq  \sum_{n\in \Lambda_3} |v_n^{\lambda}(0)|^2 |n|^{2s}\geq \lambda^{-2} (1-e^{-\gamma_0 N \sigma})^2 S_{\Lambda_3}\geq \frac{1}{2} \lambda^{-2} S_{\Lambda_3}.
\]
Hence, the first estimate in \eqref{toy:high_growth} follows by choosing $s_*$ such that
\[
	\lambda^{-2} S_{\Lambda_3} \sim \mu^2.
\]
By \eqref{size:mode2}
\[
	2^{N-1} C^{-2s} \mathbf{R}^{2s} \le S_{\Lambda_3} \le 2^{N-1} 3^{2Ns} C^{2s} \mathbf{R}^{2s}.
\]
Therefore, we need to choose $\lambda$ such that
\begin{equation}\label{last}
	a\mu^{-2} 2^{N-1} C^{-2s} \mathbf{R}^{2s} \le \lambda^2 \le b\mu^{-2} 2^{N-1} 3^{2Ns} C^{2s} \mathbf{R}^{2s}
\end{equation}
for some universal constants $a, b>0$. We observe that for $N$ large enough
\[
	a C^{-2s} < b\, 3^{2Ns} C^{2s} ,
\]
thus the interval \eqref{last} is non-empty.
We write 
\begin{equation}\label{s+delta}
	s_* = s + \delta
\end{equation}
where $\delta=\delta(N, \mu)>0$. Considering that $\lambda=\mathbf{R}^{s_*}$, the condition \eqref{last} reads
\[
	a\mu^{-2} 2^{N-1} C^{-2s} \le \mathbf{R}^{2\delta} \le b\mu^{-2} 2^{N-1} 3^{2Ns} C^{2s},
\]
which in turn is equivalent to
\begin{equation}\label{delta}
	\frac{1}{2 \log(\mathbf{R})} \log(a\mu^{-2} 2^{N-1} C^{-2s}) \le \delta \le \frac{1}{2 \log(\mathbf{R})} \log(b\mu^{-2} 2^{N-1} 3^{2Ns} C^{2s}).
\end{equation}
Choosing $\delta$ in the above interval we have the desired $s_*$. 
Observe that
\[
	\lim_{N\to +\infty} \delta=0.
\]
Concerning the second bound in \eqref{toy:high_growth} we have, by \eqref{b2} and \eqref{eq:lower_bound},
\[
\frac{\| v^{\lambda}(T)\|_{H^s}^2}{\| v^{\lambda}(0)\|_{H^s}^2}\geq \frac{\lambda^{-2} (1-e^{-\gamma_0 N \sigma})^2 S_{\Lambda_{N-2}}}{\frac{3}{2}\lambda^{-2} S_{\Lambda_3}}\geq \frac{1}{3} \frac{S_{\Lambda_{N-2}}}{S_{\Lambda_3}}>\frac{1}{3} 2^{(N-6)(s-1)}.
\]
Recall moreover that $\|v^{\lambda}(0)\|_{H^s}\sim\mu$. Then, by taking 
\begin{equation}\label{choice:N}
    N\geq \frac{\log(3 C \mathcal{K}^2 \mu^{-2})}{(s-1)\log(2)}+7
\end{equation}
for a suitable universal constant $C>0$,
we get
\[
\| v^{\lambda}(T)\|_{H^s}>\mathcal{K},
\]
as desired. Finally, by \eqref{eq:Ms_bound}, 
\[
\| v^{\lambda} \|_{s_0, M}\le \sqrt{N2^{N-1}} \lambda^{-1} M^{s_0}\lesssim 3^{N s_0} \sqrt{2^{N-1}} \mathbf{R}^{s_0-s_*}.
\]
Using $s_* > s_0+1$ and \eqref{Rmax}, we get $\mathbf{R}^{s_0-s_*} \le \mathbf{R}^{-1} \le \exp(-\alpha^N)$. Thus, the right-hand side of the above inequality is bounded by $C 3^{N s_0} 2^{N/2} \exp(-\alpha^N)$, which converges to zero as $N \to \infty$.
\end{proof}

\begin{remark}
The scaling parameter $\lambda$ is used to ensure the smallness of the solution in $\ell^1$. In our context, the nonlinearity involves derivatives, which implies a potential loss of regularity. Therefore, $\lambda$ cannot be chosen freely but must be related to the frequency cutoff $M$ to balance the growth of the vector field estimates in the $\|\cdot\|_{s,M}$-norm.
\end{remark}

\section{Weak Birkhoff normal form}\label{sec:wb}
In this section, we use a normal form argument to eliminate the first order non-resonant nonlinear
interactions that couple the dynamics of the modes in $\Lambda$ with the remaining ones. This is achieved by constructing a suitable system of coordinates which shows that the
NLS Hamiltonian differs from the Hamiltonian $H^{(2)}+\mathcal{H}^{(4)}$ only by a
remainder term, which will be shown to be perturbative in our analysis. In turn, this will allow us to show the existence of NLS solutions which behave as the unstable trajectory $v^{\lambda}$ constructed in Corollary \ref{co:unstable_toy}.

\smallskip

The key point is to perform a weak normalization of the Hamiltonian, in order to obtain
a change of coordinates that equals the identity up to smoothing operators. Maps of this
type preserve the paradifferential structure of the original equation, and this will be fundamental to manage the effects of the remainder term, which carries derivatives of order two, over long time intervals.

The main result of this section is the following (we recall that $M$ is fixed, and given by \eqref{choose_M}).

\begin{theorem}\label{thm:wbnf}
Fix $s>1$. There exists $r_0>0$ with the following property: for any given $r\in (0, r_0)$, we can find a symplectic, invertible change of coordinates 
$$\Phi\colon B_{\ell^1}(r)\cap H^{s}\to B_{\ell^1}(2r)\cap H^{s},\quad v=\Phi(z),$$
such that 
\begin{equation}\label{ham:bnf}
{H}\circ \Phi = H^{(2)}+\mathcal{H}^{(4)} + R^{(\geqslant 6)} + {H}^{(\geq 8)}\circ \Phi,
\end{equation}
where 
\begin{equation}\label{bound:rem}
	\| X_{R^{(\geqslant 6)}}(z)\|_{s,M}\lesssim  \| z \|_{\ell^1}^4 \| z \|_{s,M} \qquad \forall z\in B_{\ell^1}(r)\cap H^{s}.
\end{equation}
Moreover, for all $z\in B_{\ell^1}(r)\cap H^{s}$, we have 
\begin{align}\label{close1}
\|\Phi(z)-z\|_{\ell^1}&\lesssim  \| z \|^3_{\ell^1},\\\label{close11} \|\Phi(z)-z\|_{s,M}&\lesssim  \| z \|^2_{\ell^1} \| z \|_{s,M},
\\\label{Phi:equiv}
\| \Phi(z) \|_{s, M}&\lesssim \| z \|_{s, M},
\\ \label{close2}
\| d \Phi(z)[h]-h \|_{\ell^1}&\lesssim  \| z \|^2_{\ell^1} \| h\|_{\ell^1},\\ \label{close3}
\| d \Phi(z)[h]-h \|_{s,M}&\lesssim \| z\|_{\ell^1}^2 \| h \|_{s,M}\,.
\end{align}
Finally, 
\begin{itemize}
\item[(i)] the maps $\Phi-\id$ and $d \Phi(z)-\id$ are of finite rank, more precisely
\begin{equation}\label{treM}
\Phi(z)-z=\Pi_{\le 3 M} [\Phi(z)-z], \qquad d\Phi(z)[h]-h=\Pi_{\le 3M} [d\Phi(z)[h]-h]
\end{equation}
for all $z, h\in B_{\ell^1}(r)\cap H^s$.
\item[(ii)] ${H}\circ \Phi$ Poisson commutes with the mass Hamiltonian $\mathcal{M}$.
\end{itemize}
The same properties and estimates  of $\Phi$ hold for the inverse $\Phi^{-1}$.
\end{theorem}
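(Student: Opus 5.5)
The plan is to take $\Phi$ as the time-one flow of a quartic Hamiltonian $\chi$ solving the homological equation $\{H^{(2)},\chi\}+H^{(4,1)}=0$. In Fourier variables we set
\[
\chi=\frac{1}{8\pi^2}\sum_{(n_1,\dots,n_4)\in\mathcal A_1}\frac{u_{n_1}\overline{u_{n_2}}u_{n_3}\overline{u_{n_4}}}{\ii\,\omega(n_1,n_2,n_3,n_4)},
\]
with the sign fixed by the Poisson bracket convention. This requires $\omega\neq0$ on $\mathcal A_1$. If $\omega=0$ and momentum is conserved, the four modes form a possibly degenerate rectangle. A degenerate rectangle ($n_1=n_2$ or $n_1=n_4$, etc.) has modes appearing in pairs, so exactly one mode outside $\Lambda$ is impossible. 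A non-degenerate rectangle with three vertices in $\Lambda$ has its fourth vertex in $\Lambda$ by (P1). Hence $|\omega|\ge1$ on $\mathcal A_1$, since $\omega\in\Z$.

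The key structural fact is that, in each monomial of $\chi$, the unique mode outside $\Lambda$ equals $\pm n_a\pm n_b\pm n_c$ with $n_a,n_b,n_c\in\Lambda$. By (P6) and \eqref{choose_M} this mode has size at most $3M$. So $\chi$ depends only on finitely many modes, all in $\{|n|\le 3M\}$, and $X_\chi$ is Fourier supported in $\{|n|\le3M\}$. It follows that $\Phi-\id$ and $d\Phi-\id$ take values in $\Pi_{\le3M}$, which gives \eqref{treM}.

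For the estimates, we bound the coefficients by $|\omega|^{-1}\le1$ and use Young's inequality for convolutions:
\[
\|X_\chi(z)\|_{\ell^1}\lesssim\|z\|_{\ell^1}^3.
\]
For the $\|\cdot\|_{s,M}$ bound, the output lies in $|n|\le3M$, where $\langle n\rangle_M^s\sim M^s\lesssim\langle m\rangle_M^s$ for every input frequency $m$, because $\langle\cdot\rangle_M\ge M$. Therefore we can put the whole weight on any one factor, obtaining
\[
\|X_\chi(z)\|_{s,M}\lesssim\|z\|_{\ell^1}^2\|z\|_{s,M},
\]
and similarly for $dX_\chi$. The implicit constants must be independent of $M$, and this is exactly what the modified bracket provides. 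A standard Picard/Gronwall argument on the flow $\Phi^t_\chi$, $t\in[0,1]$, in $\ell^1$ gives existence for $\|z\|_{\ell^1}<r_0$, with image in $B_{\ell^1}(2r)$, together with \eqref{close1}, \eqref{close2}. The same argument in the $\|\cdot\|_{s,M}$ norm, with the $\ell^1$ norm kept small, gives \eqref{close11}, \eqref{close3} and \eqref{Phi:equiv}. The inverse is the time $-1$ flow, so it satisfies the same bounds. The map is symplectic because it is a Hamiltonian flow. Since every monomial of $\chi$ has two $u$'s and two $\bar u$'s, $\chi$ commutes with $\mathcal M$. Property (ii) then follows because $\mathcal M\circ\Phi=\mathcal M$ and $H$ commutes with $\mathcal M$ (gauge invariance of \eqref{NLS}).

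For the expansion, Lie series give
\[
(H^{(2)}+H^{(4)})\circ\Phi=H^{(2)}+H^{(4)}+\{H^{(2)},\chi\}+R^{(\ge6)},
\]
with
\[
R^{(\ge6)}=\int_0^1\{H^{(4)}+(1-t)\{H^{(2)},\chi\},\chi\}\circ\Phi^t\,dt=\int_0^1\{H^{(4)}-(1-t)H^{(4,1)},\chi\}\circ\Phi^t\,dt.
\]
The homological equation cancels $H^{(4,1)}$, which leaves $H^{(2)}+\mathcal H^{(4)}+R^{(\ge6)}$, while $H^{(\ge8)}\circ\Phi$ is kept as it is.

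The main obstacle is \eqref{bound:rem}. The vector field of $R^{(\ge6)}$ is
\[
X_{R^{(\ge6)}}=(d\Phi^t)^{-1}\,[X_{G},X_\chi]\circ\Phi^t,\qquad G=H^{(4)}-(1-t)H^{(4,1)},
\]
up to sign. The bracket involves $dX_\chi\,X_G$ and $dX_G\,X_\chi$. Here $X_G$ is a cubic convolution-type field with bounded coefficients and no derivatives, so it satisfies tame bounds
\[
\|X_G(z)\|_{s,M}\lesssim\|z\|_{\ell^1}^2\|z\|_{s,M},\qquad\|dX_G(z)h\|_{s,M}\lesssim\|z\|_{\ell^1}^2\|h\|_{s,M}+\|z\|_{\ell^1}\|z\|_{s,M}\|h\|_{\ell^1}.
\]
Combining these with the bounds for $X_\chi$, the $\ell^1$ bound $\|X_\chi\|_{\ell^1}\lesssim\|z\|^3_{\ell^1}$, and the bounds for $\Phi^t$ yields $\|z\|_{\ell^1}^4\|z\|_{s,M}$. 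The point that needs care is to verify that no factor of $M$ appears. This holds because $H^{(4)}$ has no derivatives and the $\langle\cdot\rangle_M$ weights satisfy $\langle n_1+n_2-n_3\rangle_M^s\lesssim\max_i\langle n_i\rangle_M^s$ with a constant depending only on $s$.
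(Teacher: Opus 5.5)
Your proposal is correct and follows essentially the same route as the paper. Both arguments take the time-one flow of the quartic generator solving the homological equation on $\mathcal A_1$, with non-resonance coming from (P1). Both use Young-type convolution bounds in $\ell^1$ and in $\|\cdot\|_{s,M}$, exploiting that all modes of the generator lie in $\{|n|\le 3M\}$, and a contraction/Gronwall argument for the flow and its differential. Both bound the Lie-series remainder built from $\mathcal H^{(4)}+tH^{(4,1)}$ by $\|z\|_{\ell^1}^4\|z\|_{s,M}$. Your explicit exclusion of degenerate rectangles, where modes pair up so that exactly one mode outside $\Lambda$ cannot occur, is slightly more careful than the paper. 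Writing $X_{R^{(\ge 6)}}$ through the commutator $[X_G,X_\chi]$ only repackages the paper's direct Young bound on $X_{\{F,\mathcal H^{(4)}\}}$.
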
 
The rest of this section is devoted to the proof of the above theorem.

\subsection*{Existence of a local flow and estimates}
We consider the generating function
\[
F(z)=\sum_{\substack{n_1-n_2+n_3-n_4=0\\ (n_1, n_2, n_3, n_4)\in \mathcal{A}_1}} F_{n_1 n_2 n_3 n_4}\,z_{n_1} \overline{z_{n_2}} z_{n_3} \overline{z_{n_4}},
\]
where $\mathcal{A}_1$ is defined in \eqref{Ak} and we set 
\[
F_{n_1 n_2 n_3 n_4}= \dfrac{\mathrm{i}}{8\pi^2\omega(n_1,n_2,n_3,n_4)}
\]
We claim that the above expression is well-defined, and moreover
\begin{equation}\label{upper_uno}
\sup | F_{n_1 n_2 n_3 n_4} | \le 1.
\end{equation}
Indeed, if $n_1-n_2+n_3-n_4=0$ and $(n_1,n_2,n_3,n_4)\in \mathcal{A}_1$, then by the closure property $(P1)$ of the $\Lambda$ set, the configuration cannot form a rectangle, meaning $\omega(n_1,n_2,n_3,n_4)\in\Z\setminus\{0\}$, which yields the bound \eqref{upper_uno}. A direct computation shows that the Hamiltonian $F$ satisfies the following homological equation
\begin{equation}\label{homolog}
\{F, H^{(2)} \} + H^{(4, 1)} = 0.
\end{equation}
We will construct $\Phi$ as the time-one flow map of $F$. In the following, we show that this flow is indeed well-defined. First, we provide suitable estimates for the vector field $X_F$ on $\ell^1$ and $H^{s}$. Set
\begin{equation}\label{p}
p_n = |z_n| \qquad n\in \Z^2.
\end{equation}
By Young's inequality,
\begin{equation}\label{XF1}
\| X_F(z) \|_{\ell^1} \lesssim \sum_{n\in \Z^2} \sum_{n_1-n_2+n_3=n} |z_{n_1}| \,|{z_{n_2}}|\, |z_{n_3} | \lesssim \sum_{n\in \Z^2} (p*p*p)_n \lesssim \| p*p*p \|_{\ell^1} \lesssim \| z \|_{\ell^1}^3.
\end{equation}
For the Sobolev norm, we have
\begin{align*}
\| X_F(z) \|^2_{s, M} &\lesssim \sum_{n\in \Z^2} \Big| \sum_{n_1-n_2+n_3=n} z_{n_1} \overline{z_{n_2}} z_{n_3} \Big|^2 \langle n \rangle_{M}^{2s} \lesssim \sum_{n\in \Z^2} \Big( \sum_{n_1-n_2+n_3=n} |z_{n_1} |\,|{z_{n_2}}|\, |z_{n_3}|  \langle n \rangle_{M}^{s} \Big)^2 \\
&\lesssim \sum_{n\in \Z^2} \Big( \sum_{\substack{n_1-n_2+n_3=n\\ |n_1|\geq \max\{ |n_2|, |n_3| \}}} |z_{n_1}| \,|{z_{n_2}}| \,|z_{n_3}| \langle n_1 \rangle_{M}^{s} \\&\quad+ \sum_{\substack{n_1-n_2+n_3=n\\ |n_2|\geq \max\{ |n_1|, |n_3| \}}} |z_{n_1}| \,|{z_{n_2}}| \,|z_{n_3}| \langle n_2 \rangle_{M}^{s}\\
&\quad + \sum_{\substack{n_1-n_2+n_3=n\\ |n_3|\geq \max\{ |n_1|, |n_2| \}}} |z_{n_1}| \,|{z_{n_2}}| \,|z_{n_3}| \langle n_3 \rangle_{M}^{s} \Big)^2,
\end{align*}
and applying again Young's inequality we obtain
\begin{equation}\label{XFs}
\| X_F(z) \|^2_{s, M} \lesssim \| D_M^s p* p*p \|_{\ell^2}^2 \lesssim \|D_M^s p \|_{\ell^2}^2 \| p\|^4_{\ell^1} \lesssim \| z \|^4_{\ell^1}\| z \|^2_{s, M}.
\end{equation}
We fix $z\in B_{\ell^1}(r)$, for some $r>0$ to be chosen, and consider the operator
\begin{equation}\label{opL}
L_z [w](t) = z + \int_0^t X_F(w(\tau))\,d\tau
\end{equation}
defined on 
\[
Y_r = \Big\{ w \in \mathcal{C}([0, 1] ; \ell^1) : \sup_{t\in [0, 1]} \| w(t) \|_{\ell^1} \le 2 \|z\|_{\ell^1} \Big\}.
\]
We have that, for all $w\in Y_r$ and $t\in[0,1]$,
\begin{equation}\label{Ll13}
\| L_z[w](t) - z\|_{\ell^1} \le \int_0^t \| X_F(w(\tau))\|_{\ell^1}\,d\tau \stackrel{\eqref{XF1}}{\lesssim} \int_0^t \| w(\tau) \|^3_{\ell^1}\,d\tau \lesssim  \| z \|_{\ell^1}^3.
\end{equation}
In particular, 
$$\| L_z[w](t) - z\|_{\ell^1}\lesssim r^2\|z\|_{\ell^1},$$ and by choosing $r>0$ small enough we guarantee that $L_z$ maps $Y_r$ into itself. Now let us prove that $L_z$ is a contraction on $Y_r$. Indeed, for all $w_1, w_2\in Y_r$ and $t\in[0,1]$,
\begin{align*}
\| L_z[w_1](t) - L_z[w_2](t) \|_{\ell^1} &\le \int_0^t \| X_F(w_1(\tau)) - X_F(w_2(\tau)) \|_{\ell^1}\,d\tau\\
&\lesssim \int_0^t (\| w_1(\tau) \|_{\ell^1} + \| w_2(\tau)\|_{\ell^1})^2 \|w_1(\tau) - w_2(\tau)\|_{\ell^1}  \,d\tau\\
&\lesssim r^2 t \|w_1(\tau) - w_2(\tau)\|_{\ell^1}.
\end{align*}
Therefore, choosing $r>0$ possibly smaller, we obtain that $L_z$ is a contraction on $Y_r$. By the Banach contraction theorem, there exists a unique fixed point $w_z\in Y_r$ of the map \eqref{opL}. 

Setting $\Phi^t(z):=w_z(t,\cdot)$ for all $z\in B_{\ell^1}(r)$ and $t\in [0, 1]$, we obtain a well-defined local flow for $F$, such that $\Phi^t\colon B_{\ell^1}(r)\to B_{\ell^1}(2r)$. Note that, by construction,
\begin{align}\label{eq:integral_identity_Phi}
&\Phi^t(z) = z + \int_0^t X_F(\Phi^{\tau}(z))\,d\tau  &&\forall t\in [0, 1], \forall z\in B_{\ell^1}(r)\\
\label{claim1}
&\| \Phi^t(z) \|_{\ell^1} \le 2 \| z \|_{\ell^1}  &&\forall t\in [0, 1],\;\forall z\in B_{\ell^1}(r).
\end{align}
We define the symplectic change of coordinates $\Phi$ by
\[
\Phi:=\Phi^{t=1}.
\]
Now we show that the local flow $\Phi^t$ propagates the Sobolev regularity. Given $z\in B_{\ell^1}(r)\cap H^s$ and $t\in [0, 1]$ we have
\begin{equation}\label{sky2}
\begin{aligned}
\| \Phi^t(z)-z\|_{s, M} \le \int_0^t \| X_F(\Phi^{\tau}(z)) \|_{s, M}\,d\tau &\stackrel{\eqref{XFs}}{\lesssim}  \int_0^t \| \Phi^{\tau}(z) \|_{\ell^1}^2 \| \Phi^{\tau}(z) \|_{s, M}\,d\tau\\
&\, \lesssim \|z\|_{\ell^1}^2 \| \Phi^{\tau}(z) \|_{s, M}\,.
\end{aligned}
\end{equation}
Therefore, $\| \Phi^t(z)-z\|_{s, M}\lesssim r^2\| \Phi^{\tau}(z) \|_{s, M}$, and by choosing $r>0$ possibly smaller we deduce
\begin{equation}\label{claim2}
\|\Phi^t(z)\|_{s, M} \le 2  \| z \|_{s, M}\,.
\end{equation}
Finally, estimates \eqref{close1}, \eqref{close11} and \eqref{Phi:equiv} follows respectively from \eqref{Ll13}, \eqref{sky2} and \eqref{claim2}.

\subsection*{Estimates on the differential}
Let $z\in B_{\ell^1}(r)$. Since $\Phi^t(z)\in\mathcal{C}([0,1],\ell^1)$, we deduce from \eqref{eq:integral_identity_Phi} that for every fixed $t\in[0,1]$ the map $z\mapsto\Phi^t(z)$ is Frech\'et differentiable in $\ell^1$, and by setting $\Psi_z^t(h) = d\Phi^t(z) [h]$ we have the identity
\begin{equation}\label{int:diff}
\Psi_z^t(h)  = h+ \int_0^t d X_F(\Phi^{\tau}(z))[\Psi_z^{\tau}(h)]\,d\tau.
\end{equation}
Let us prove estimates \eqref{close2} and \eqref{close3}. To this end, we first show some bounds on the vector field $X_F$. 
\begin{equation}\label{bound:dXF}
\begin{aligned}
\|dX_F(z)[h]\|_{\ell^1} &\lesssim \sum_{n\in \Z^2} \Big| \sum_{n_1-n_2+n_3=n} h_{n_1} \overline{z_{n_2}} z_{n_3} + z_{n_1} \overline{h_{n_2}} z_{n_3} + z_{n_1} \overline{z_{n_2}} h_{n_3} \Big|\\
&\lesssim \sum_{n\in \Z^2} \sum_{n_1-n_2+n_3=n} \big(|z_{n_1}| |z_{n_2}| |h_{n_3}| + |z_{n_1}| |h_{n_2}| |z_{n_3}| + |h_{n_1}| |z_{n_2}| |z_{n_3}|\big)\\
&\lesssim \| p*p*\tilde{h}\|_{\ell^1} \lesssim \| z\|_{\ell^1}^2 \| h \|_{\ell^1},
\end{aligned}
\end{equation}
where we denoted by $\tilde{h}=(|h_n|)_{n\in \Z^2}$ and $p=(p_n)_{n\in \Z^2}$ is defined in \eqref{p}. 
Assuming in addition $z\in H^s$, and reasoning as above with Young's inequality, we also obtain
\begin{equation}\label{blu}
\| d X_F(z)[h] \|_{s, M} \lesssim M^{s} \| z \|_{\ell^1}^2 \| h \|_{L^2} \lesssim \| z \|_{\ell^1}^2 \| h \|_{s, M}.
\end{equation}
Then we have
\begin{equation*}
\begin{split}
\|\Psi_z^{t}(h)-h\|_{\ell^1}&=\Big\|\int_0^t dX_F(\Phi^{\tau}(z))[\Psi_z^{\tau}(h)]\Big\|_{\ell^1}\\
&\lesssim \|\Phi^{\tau}(z)\|_{\ell^1}^2\|\Psi_z^{\tau}(h)\|_{\ell^1}\lesssim \|z\|_{\ell^1}^2\big(\|\Psi_z^{\tau}(h)-h\|_{\ell^1}+\|h\|_{\ell^1}\big)\\
&\lesssim r^2\|\Psi_z^{\tau}(h)-h\|_{\ell^1}+\|z\|_{\ell^1}^2\|h\|_{\ell^1},
\end{split}
\end{equation*}
where we used identity \eqref{int:diff} in the first step, \eqref{bound:dXF} in the second step and \eqref{claim1} in the third step. By choosing $r$ possibly smaller, the bound above yields estimate \eqref{close2}.

Similarly, assuming $z\in H^s$ and using \eqref{int:diff}, \eqref{blu} and \eqref{claim1} we obtain
\begin{equation*}
\begin{split}
\|\Psi_z^{t}(h)-h\|_{s,M}&=\Big\|\int_0^t dX_F(\Phi^{\tau}(z))[\Psi_z^{\tau}(h)]\Big\|_{s,M}\\
&\lesssim \|\Phi^{\tau}(z)\|_{\ell^1}^2\|\Psi_z^{\tau}(h)\|_{s,M}\lesssim \|z\|_{\ell^1}^2\big(\|\Psi_z^{\tau}(h)-h\|_{s,M}+\|h\|_{s,M}\big)\\
&\lesssim r^2\|\Psi_z^{\tau}(h)-h\|_{s,M}+\|z\|_{\ell^1}^2\|h\|_{s,M},
\end{split}
\end{equation*}
and by choosing $r$ possibly smaller we deduce estimate \eqref{close3}.

\subsection*{Lie Series Expansion}
By the Lie series expansion and \eqref{homolog}, the new Hamiltonian is given by
\begin{align*}
\mathcal{H} \circ \Phi &= H^{(2)} + H^{(4)} + \{ F, H^{(2)} \} + R^{(\geq 6)} + H^{(\geq 8)}\circ \Phi\\
&= H^{(2)} + H^{(4, 0)} + H^{(4, \geq 2)} + \underbrace{(\{ F, H^{(2)} \} + H^{(4, 1)})}_{=0} + R^{(\geq 6)} + H^{(\geq 8)}\circ\Phi,
\end{align*}
where 
\[
R^{(\geq 6)}(z) = \int_0^1 \{ F, \mathcal{H}^{(4)} \}\circ \Phi^{\tau}(z)\,d\tau +  \int_0^1 \tau\, \{ F, H^{(4, 1)} \}\circ \Phi^{\tau}(z)\,d\tau\,.
\]
Then the associated vector field is
\begin{equation}\label{versei}
\begin{split}
X_{R^{(\geq 6)}}(z) &= \int_0^1 [d \Phi^{\tau}(z)]^{-1} X_{\{ F, \mathcal{H}^{(4)} \}}\circ \Phi^{\tau}(z)\,d\tau \\
&+ \int_0^1 \tau [d \Phi^{\tau}(z)]^{-1} X_{ \{ F, H^{(4, 1)} \}}\circ \Phi^{\tau}(z)\,d\tau.
\end{split}
\end{equation}
By Young's inequality, for all $z\in H^s$,
\begin{equation}\label{casei}
\|X_{\{ F, \mathcal{H}^{(4)}\}}(z)\|_{s, M} \lesssim \|z\|^4_{\ell^1} \| z\|_{s, M}, \qquad \|X_{\{ F, {H}^{(4, 1)}\}}(z)\|_{s, M} \lesssim \|z\|^4_{\ell^1} \|z\|_{s, M}.
\end{equation}
The bound \eqref{bound:rem} then follows by \eqref{close2}, \eqref{close3}, \eqref{claim1}, \eqref{claim2}, \eqref{versei} and \eqref{casei}.

\subsection*{Finite rank maps}
Given $n_1, n_2, n_3\in \Lambda$, we have
\[
|n_1-n_2+n_3|\le |n_1|+|n_2|+|n_3|\le 3 M,
\]
since $\operatorname{max}_{n\in\Lambda}|n|\leqslant M$ in view of \eqref{size:mode2}. Thus, if $(n_1, n_2, n_3, n_4)\in\mathcal{A}_1$, each $|n_i|\le 3 M$ and therefore
\[
\Pi_{\le 3M} X_F(z) = \Pi_{\le 3 M} X_F( \Pi_{\le 3M} z) = X_F(z) \qquad \forall z\in B_{\ell^1}(r)\cap H^s.
\]
Hence,
\[
\Pi_{\le 3M} (\Phi^t(z)-z) = \int_0^t \Pi_{\le 3 M} X_F(\Phi^{\tau}(z)) d\tau = \int_0^t X_F(\Phi^{\tau}(z)) d\tau = \Phi^t(z)-z,
\]
and
\[
\Pi_{\le 3 M} d X_F(z) [h] = d \Big( \Pi_{\le 3 M} X_F(z) \Big) [h] = d X_F(z) [h].
\]
In view of \eqref{int:diff} we also have that
\[
\Pi_{\le 3 M} (\Psi_z^t(h)-h) = \Psi_z^t(h)-h.
\]
Thus \eqref{treM} is proved, and in particular $\Phi-\id$ and $d\Phi(z)-\id$ are finite rank maps.

\subsection*{Commutation with the mass functional}
We observe that
\[
\frac{d}{dt} \mathcal{M}\circ\Phi^t = \{F, \mathcal{M} \}\circ\Phi^t = 0 \qquad \forall t\in [0, 1].
\]
Then $\mathcal{M}\circ\Phi^t = \mathcal{M}\circ\Phi^0 = \mathcal{M}$ for all $t\in [0, 1]$. Since the Hamiltonian ${H}$ is mass preserving and $\Phi$ is symplectic, we have that
\[
\{ {H}\circ \Phi, \mathcal{M} \} = \{ {H}, \mathcal{M}\circ \Phi^{-1}\} \circ \Phi = \{ {H}, \mathcal{M}\} \circ \Phi = 0.
\]
}
\subsection*{Inverse of $\Phi$}
By taking $r_0$ possibly smaller, $\Phi$ is invertible (onto its image), the inverse $\Phi^{-1}$ being given by the backward flow of $F$ from time $t=0$ to time $t=-1$. In particular, all the properties and estimates satisfied by $\Phi$ hold true also for $\Phi^{-1}$.

\section{Energy estimates}\label{sec:energy}
In this section we show suitable energy estimates for the difference between solutions to the NLS equation, in the normal form coordinates, and trajectories of the truncated Hamiltonian $H^{(2)}+\mathcal{H}^{(4)}$, as the energy cascade orbit provided by Theorem \ref{thm:toy_model_scaled}.

\smallskip

In this section, we consider functional equations on the real Sobolev spaces $\mathcal{H}^s$ defined in \eqref{RealSobolev} and we systematically use the variables $Z=(z, \bar{z})$. We recall the discussion at the beginning of section \ref{sec:ham} concerning the convention used to denote the changes of variables.

With a slight abuse of notation, when we write that $Z=Z(t)$ solves an equation associated to a Hamiltonian $H$ we mean that $z=z(t)$ does it.
We also recall that the constant $M$ has been fixed in \eqref{choose_M}.

The main result of this section is the following.

\begin{theorem}
    [\emph{Energy estimate}]\label{pr:ener_est}
    Fix $s_0>4$, $s\geqslant s_0$ and $T>0$. There exists $\eps>0$ sufficiently small such that the following holds: if {$V^\lambda \in \mathcal{C}([0,T],\mathcal{H}^{s+2}(\T^2))$} is a solution to $H^{(2)}+\mathcal{H}^{(4)}$, and $Z\in \mathcal{C}([0,T],\mathcal{H}^s(\T^2))\cap \mathcal{C}^1([0, T], \mathcal{H}^{s-2}(\T^2))$ is a solution to $H \circ \Phi$ (see \eqref{ham:bnf}) such that $\Phi(Z)(t)$ exists for $t\in[0,T]$ and satisfies the smallness assumption
	\begin{equation}\label{piccolezza-norma-bassa}
		\sup_{t\in[0,T]}\|\Phi(Z)(t)\|_{s_0,M} < \eps,
	\end{equation}
	{then, for every $t\in[0,T]$, the difference $W := Z - V^\lambda$ satisfies the following bound:}
	{
	\begin{equation}\label{energia_finale}
		\begin{aligned}
			\|W(t)\|_{s,M}^2 \lesssim {}& \|W(0)\|_{s,M}^2 + \int_{0}^t M \|W(\tau)\|_{s,M}^2\|Z(\tau)\|_{s_0,M}^6 \\
			&+ \|W(\tau)\|_{s,M} \Big( \|Z(\tau)\|_{s_0-2,M}^6\big[\|(Z-\Phi(Z))(\tau)\|_{s+2,M}+\|V^{\lambda}(\tau)\|_{s+2,M}\big] \\
			&+ \|Z(\tau)\|_{s_0-1,M}^6\big[\|(Z-\Phi(Z))(\tau)\|_{s+1,M}+\|V^{\lambda}(\tau)\|_{s+1,M}\big] \\
			&+ \|X_{\mathcal{H}^{(4)}}(W+V^{\lambda}) - X_{\mathcal{H}^{(4)}}(V^{\lambda})\|_{s,M} +{ \|R(Z)(\tau)\|_{s,M}} + \|R_5(Z)(\tau)\|_{s,M} \\
			&+ \|Z(\tau)\|_{s_0,M}^6\|W(\tau)\|_{s,M} + \|\widetilde{G}(Z)(\tau)\|_{s,M} \Big) d\tau,
		\end{aligned}
	\end{equation}
	}
	where the remainder terms $R$, $R_5$ and $\widetilde{G}$ satisfy 
	\begin{align}
      \label{new_est_r}  \|R(Y)\|_{s,M} &\lesssim \|Y\|_{s_0,M}^{6}\|Y\|_{s,M},\\
	\label{new_est_5}	\|R_5(Y)\|_{s,M} &\lesssim \|Y\|_{\ell^1}^4 \|Y\|_{s,M}, \\
	\label{new_est_G}	\|\widetilde{G}(Y)\|_{s,M} &\lesssim M^2\|Y\|^8_{s_0,M}\|Y\|_{s,M},
	\end{align}
    for any $Y\in \mathcal{H}^s$.
\end{theorem}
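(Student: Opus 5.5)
We will write the equation for $W$ directly from the Hamiltonian structure \eqref{ham:bnf}. Since $Z$ solves $H\circ\Phi$ and $V^\lambda$ solves $H^{(2)}+\mathcal{H}^{(4)}$, we have
\[
\partial_t W = X_{H^{(2)}}(W) + \big[X_{\mathcal{H}^{(4)}}(W+V^\lambda)-X_{\mathcal{H}^{(4)}}(V^\lambda)\big] + X_{R^{(\geqslant 6)}}(Z) + X_{H^{(\geq 8)}\circ\Phi}(Z).
\]
The first term is skew-adjoint, and the second and third terms are forcing terms already present in \eqref{energia_finale}. We set $R_5(Z):=X_{R^{(\geqslant6)}}(Z)$, so that \eqref{new_est_5} follows from \eqref{bound:rem}. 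The only real work is the quasilinear term. Using symplecticity, $X_{H^{(\geq8)}\circ\Phi}(Z)=[d\Phi(Z)]^{-1}X_{H^{(\geq8)}}(\Phi(Z))$. By \eqref{treM} and \eqref{close3}, $[d\Phi(Z)]^{-1}-\id$ is a finite rank operator, projected onto $|n|\le 3M$. Its contribution applied to $X_{H^{(\ge8)}}$ costs at most $M^2$ derivatives, which gives the term $\widetilde G$ with bound \eqref{new_est_G}; this uses $\|\Phi(Z)\|_{s_0,M}\lesssim\|Z\|_{s_0,M}$ from \eqref{Phi:equiv}. We are then left with $X_{H^{(\geq 8)}}(\Phi(Z))$.

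Next we paralinearize $X_{H^{(\ge8)}}(U)$ at $U=\Phi(Z)$, using the results of Appendix \ref{app:para} in the adapted $\|\cdot\|_{s,M}$ topology. The result is a $2\times2$ paradifferential operator $\opbw(A_2(U;x,\xi)+A_1(U;x,\xi))U$ plus a smoothing remainder $R(U)$ obeying \eqref{new_est_r}. Here $A_2$ is of order two and has size $\|U\|^6$, since $h$ vanishes to order two. We then write
\[
U=\Phi(Z)=W+V^\lambda+(\Phi(Z)-Z).
\]
The operator acting on $W$ is kept in the principal part. The operator applied to $V^\lambda+(\Phi(Z)-Z)$ is treated as forcing, and by the paraproduct bounds with coefficients in $s_0-2$ (respectively $s_0-1$) norms this forcing yields precisely the terms $\|Z\|^6_{s_0-2,M}[\ldots]_{s+2,M}$ (respectively $\|Z\|^6_{s_0-1,M}[\ldots]_{s+1,M}$) in \eqref{energia_finale}. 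Coefficients evaluated at $\Phi(Z)$ are controlled by those at $Z$ via \eqref{Phi:equiv}.

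We then reduce the principal part $-\ii E(|\xi|^2+A_2)+A_1$. The order-two symbol has off-diagonal entries, so we first apply a paradifferential change of variables $\Psi(Z)=\opbw(P)$, with $P$ close to $\id$ since the symbols are small by \eqref{piccolezza-norma-bassa}, which diagonalizes $A_2$ at principal order. The new leading symbol is real, of the form $(1+a(x))|\xi|^2+\ldots$ (up to the precise structure of $A_2$). A second step makes the order-one part self-adjoint up to order zero. We then define the modified energy $\|\opbw(\langle\xi\rangle_M^{s}\,\Theta(x,\xi))\Psi W\|_{L^2}^2$, where the weight $\Theta=(1+a)^{s/2}$-type is chosen so that the commutator with the order-two operator vanishes at orders $2s+1$. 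The remaining commutator terms are then of order $2s$ and are bounded by $\|Z\|^6_{s_0,M}\|W\|^2_{s,M}$. The time derivatives of the coefficients involve $\partial_t Z$; these contain two derivatives, and since coefficient norms are measured in $s_0$ with $\|\cdot\|_{s,M}$ weights, they produce the term $M\|W\|^2_{s,M}\|Z\|^6_{s_0,M}$. This energy is equivalent to $\|W\|^2_{s,M}$ by smallness. Integrating in time and applying Cauchy--Schwarz to each forcing term against $\|W\|_{s,M}$ gives \eqref{energia_finale}.

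We expect the main obstacle to be the symmetrization and modified energy step. Uniformly in $M$, we must check that the symbolic calculus in the $\langle\xi\rangle_M$ weights loses nothing. In particular, the order-one cancellation has to produce only bounded commutators, not $M$-dependent ones, except for the single factor $M$ allowed in \eqref{energia_finale}. We would first try to establish a composition theorem for $\opbw$ with symbols in the class $\langle\xi\rangle_M^m$, with remainders gaining $\langle\xi\rangle_M^{-\rho}$. This follows the standard proof, with $\langle\xi\rangle$ replaced by $\langle\xi\rangle_M$.
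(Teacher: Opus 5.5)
Your outline follows the paper's proof essentially step by step. It uses the same $R_5=X_{R^{(\geqslant 6)}}$ and the same $\widetilde G=([d\Phi(Z)]^{-1}-\id)X_{H^{(\geq 8)}}(\Phi(Z))$, controlled through \eqref{treM}. It uses the same splitting $\Phi(Z)=W+V^\lambda+(\Phi(Z)-Z)$, the same diagonalization by the eigenvector matrix of $E(\id+A_2)$, and the same energy symbol $\mu^s\langle\xi\rangle_M^{2s}$. No second conjugation is needed for the order-one part: after diagonalization its coefficient $\vec a_1^{(1)}=\vec a_1+2\mu\operatorname{Im}(\overline{s_2}\nabla_x s_2)$ is already real.

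The substantive problem is in the step you single out as the main obstacle. With your weight $\Theta=(1+a)^{s/2}=\mu^{s/2}$, the principal commutator does \emph{not} cancel uniformly in $M$:
\[
\{\mu^s\langle\xi\rangle_M^{2s},\,\mu|\xi|^2\}=2s\mu^s\langle\xi\rangle_M^{2s-2}(\xi\cdot\nabla_x\mu)\big(|\xi|^2-\langle\xi\rangle_M^2\big)=-2sM^2\mu^s\langle\xi\rangle_M^{2s-2}(\xi\cdot\nabla_x\mu).
\]
The reason is that the inhomogeneous weight $\langle\xi\rangle_M$ does not match the homogeneous dispersion $|\xi|^2$. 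For fixed $M$ this symbol is of order $2s-1$. Uniformly in $M$, however, it is comparable to $\langle\xi\rangle_M^{2s+1}$ at $|\xi|\sim M$. Relative to the energy weight its size is $\sup_{r\geq0}M^2r/(M^2+r^2)=M/2$.

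So your claim that the remaining commutator terms are bounded by $\|Z\|^6_{s_0,M}\|W\|^2_{s,M}$ cannot be proved. What is true is the bound $M\|Z\|^6_{s_0,M}\|W\|^2_{s,M}$. The paper obtains it by computing this bracket explicitly and applying Lemma \ref{azione}(ii) with $m=0$ to the symbol $\mu^s\nabla_x\mu\cdot M^2\xi\langle\xi\rangle_M^{-2}$, whose $\mathcal N^0_{s_0-1,M,0}$ seminorm is $\lesssim M\|Z\|^6_{s_0,M}$. This commutator is the actual source of the first term in \eqref{energia_finale}.

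You also attribute the factor $M$ to the wrong term. The time derivative of the coefficients costs no $M$. Since $\|\partial_t\Phi(Z)\|_{s_0-2,M}\lesssim\|\Phi(Z)\|_{s_0,M}$, one gets $|\partial_t\mu^s|_{\mathcal N^0_{s_0-2,M,0}}\lesssim\|\Phi(Z)\|^6_{s_0,M}$. Lemma \ref{azione}(ii) then applies because $s_0-2>2$, and this contribution is $\lesssim\|Z\|^6_{s_0,M}\|W\|^2_{s,M}$.

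Because \eqref{energia_finale} allows exactly one term $M\|W\|^2_{s,M}\|Z\|^6_{s_0,M}$, your final inequality is the right one. The argument is repaired by redoing this bookkeeping as the paper does, but the mechanism you describe for the key step is wrong as stated.

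If you wanted the top-order bracket to vanish identically, the weight would have to be a function of $M^2+\mu|\xi|^2$, e.g. $(M^2+\mu|\xi|^2)^s$, since any function of $p=\mu|\xi|^2$ Poisson-commutes with $p$. No product $f(x)\langle\xi\rangle_M^{2s}$ achieves this unless $\mu$ is constant.
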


%\begin{remark}
 %   Since both $Z(t)$ and $V^\lambda(t)$ are defined on $[0,T]$, their difference $W(t)$ is well defined on the same interval. The smallness condition \eqref{piccolezza-norma-bassa} will be used to symmetrize the equation and to prove the equivalence of the modified energy with the Sobolev norm.
%\end{remark}

In view of the quasilinear nature of \eqref{NLS}, the estimates above cannot be directly derived through standard integration by parts techniques, as they would result in a loss of derivatives. Here we rely instead on tools from paradifferential calculus.

{Our strategy is the following:
\begin{itemize}
\item We show that the equation for $Z$ inherits the paradifferential structure of the original equation \eqref{NLS}, thanks to the fact that the Birkhoff map $\Phi$ is a smooth perturbation of the identity. See Proposition \ref{NLSparapara}.
\item We write the equation for $W$ and we show that the only unbounded term, which does not allow to close immediately the energy estimate, is a paradifferential linear operator. Then, we simmetrize it by means of a microlocal change of coordinates.
\item We construct a modified paradifferential energy (see \eqref{new_modified_energy} below) which is equivalent to the $\|\cdot\|_{s,M}$ norm. Under the smallness assumption \eqref{piccolezza-norma-bassa}, this modified functional allows us to close the energy estimates by absorbing the derivative loss.
\end{itemize}
}

%\begin{remark}\label{rem:lower}
	%In order to perform the paradifferential estimates and close the bootstrap assumption for the quasilinear equation, it is necessary to maintain a priori smallness at a low-regularity level. This is the role of the smallness assumption \eqref{piccolezza-norma-bassa} on the $H^{s_0}$-norm, which ensures that the high-frequency interactions do not destroy the energy bounds during the \blue{(large) instability time interval $[0, T]$.} 
%\end{remark}
We start by recalling from \cite{FIJMPA} the main notions and results of paradifferential calculus that will be used throughout this section, adapting them when necessary to the weighted norm $\|\cdot\|_{s,M}$ defined in \eqref{norma_sM}. Some technical facts and their proofs are collected in Appendix \ref{app:para}.

We shall consider symbols 
$\mathbb{T}^{2}\times \mathbb{R}^{2}\ni (x,\xi)\mapsto a(x,\xi)$
in the spaces 
$\mathcal{N}_{s}^{m}$, $m,s\in \mathbb{R}$, $s\ge0$,
defined by the seminorms
\begin{equation}\label{normaSimbo}
	|a|_{\mathcal{N}_{s,M,n}^{m}} := \sup_{|\beta|\le n}\sup_{ \xi\in \mathbb{R}^{2}}
	{\langle \xi\rangle_M^{-m+|\beta|}}\|\partial_{\xi}^{\beta}a(\cdot,\xi)\|_{s,M}\,.
\end{equation}
The constant $m\in \mathbb{R}$ indicates the \emph{order} of the symbol, while
$s$ denotes its differentiability in the space variable.
\begin{comment}
{\color{blue}
For a pure Fourier multiplier $p=p(\xi)$ we use instead the seminorm
\[
	|p|_{\mathcal{M}_{M,n}^m}
	:=\sup_{|\beta|\leq n}\sup_{\xi\in\mathbb{R}^2}
	\langle\xi\rangle_M^{-m+|\beta|}
	|\partial_\xi^\beta p(\xi)|.
\]
Whenever one factor is a pure Fourier multiplier, the action and composition estimates below remain valid with its $\mathcal{N}$-seminorm replaced by $|\cdot|_{\mathcal{M}_{M,n}^m}$. The proof is the same Fourier-kernel argument, using that the multiplier has only zero spatial frequency; the constants are uniform for $M\geq1$.
}
\end{comment}
Let $0<\eps< 1/4$ and consider
a smooth function $\chi : \mathbb{R}\to[0,1]$ satisfying 
\begin{equation}\label{cutofffunct}
	\chi(y) = \begin{cases}
		1 & \text{if } |y| \le 5/4, \\
		0 & \text{if } |y| \ge 8/5.
	\end{cases}
\end{equation}
We define the cut-off function
\begin{equation}\label{cutofffunctepsilon}
	\chi_{\eps}(\xi) := \chi(|\xi|/\eps)\,.
\end{equation}
For a symbol $a(x,\xi)$ in $\mathcal{N}_{s}^{m}$
we define its Bony-Weyl quantization as 
\begin{equation}\label{quantiWeyl}
	\opbw(a(x,\xi))h := T_{a}h := \frac{1}{(2\pi)^{2}}\sum_{j\in \mathbb{Z}^{2}}e^{\mathrm{i} j\cdot x}
	\sum_{k\in\mathbb{Z}^{2}}
	\chi_{\eps}\Big(\frac{|j-k|}{\langle j+k\rangle}\Big)
	\widehat{a}\Big(j-k,\frac{j+k}{2}\Big)\widehat{h}(k),
\end{equation}
where $\widehat{a}(\eta,\xi)$ denotes the $\eta$-Fourier coefficient 
of $a(x,\xi)$ in the variable $x\in \mathbb{T}^{2}$.

The definition of $T_a$ is independent of the choice of the cut-off function $\chi_{\eps}$,
up to regularizing operators (satisfying estimates of the form \eqref{diffQuanti}), as a consequence of Lemma \ref{azione}.

\subsection{Paralinearization} 
In this section, we provide a paralinearization for the vector fields driving the NLS equation and the difference equation. 

We emphasize that the algebraic structure of the paralinearization is well-established and borrowed directly from the work of Feola-Gr\'ebert-Iandoli \cite{FGI}. However, to close our energy estimates, we need to control the action of these operators using the modified Sobolev norms $\|\cdot\|_{s,M}$. While the algebraic identities are carried from \cite{FGI} without modifications, the quantitative tame estimates and remainder bounds in the $\|\cdot\|_{s,M}$ topology require an adaptation of the standard paradifferential calculus. {The proofs of these adapted estimates are collected in Appendix \ref{app:para}, in particular Subsections \ref{app:para_calculus} and \ref{app:bony}.}

Following \cite{FGI}, we focus on the paralinearization of the vector field generated by the high-order Hamiltonian $H^{(\geq 8)}$. We define the scalar symbols
\begin{equation}\label{simboa2}
	\begin{aligned}
		a_2(V) &:= \big[h'(|v|^2)\big]^2 |v|^2\,, \qquad b_2(V) := \big[h'(|v|^2)\big]^2 v^2,\\
		\vec{a}_1(V) \cdot \xi &:= 2\big[h'(|v|^2)\big]^2 \sum_{j=1}^2 \Im(v \overline{\partial_{x_j} v})\xi_j\,, \quad \xi=(\xi_1, \xi_2)\,.
	\end{aligned}
\end{equation}
Next, we introduce the operator matrices
\begin{gather}\label{matriceA2}
	A_2(V) := \begin{pmatrix} a_2(V) & b_2(V) \\ \overline{b_2(V)} & a_2(V) \end{pmatrix},\\
	\label{matriceA}
	A(V; \xi) := -\mathrm{i} E A_2(V)|\xi|^2 - \mathrm{i} \big(\vec{a}_1(V) \cdot \xi\big) \id.
\end{gather}
By exploiting this exact algebraic structure, we can decompose the quasilinear vector field as
\begin{equation}\label{paralin:8}
X_{H^{(\ge 8)}}(V) = \opbw(A(V; \xi))V + R_{\mathrm{para}}(V),
\end{equation}
where the remainder satisfies the following bound {(whose  justification relies on the paralinearization and paradifferential tools developed in Subsections \ref{app:para_calculus} and \ref{app:bony} of Appendix \ref{app:para})}:
\begin{equation}\label{bound:Rpara}
\|R_{\mathrm{para}}(V)\|_{s,M} \lesssim \|V\|_{s_0,M}^6 \|V\|_{s,M}, \quad s \ge s_0 > 4.
\end{equation}

A key feature of our approach is that we choose to write the paralinearized system \eqref{QLNLS444} using the Bony-Weyl quantization $\opbw$, rather than the classical Bony quantization $\opb$. We decided to use the Weyl quantization, because it better captures the  skew-adjoint character of the Schr\"odinger equation.

{For the complete algebraic derivation of
\eqref{paralin:8}, we refer to Feola--Gr\'ebert--Iandoli
\cite[Section 4]{FGI} (see also \cite{FIJMPA}). The weighted
Bony--Weyl paralinearization and the product, composition, and
remainder estimates used here are proved in Subsections
\ref{app:para_calculus} and \ref{app:bony} of Appendix
\ref{app:para}.}

\begin{proposition}[\emph{Paralinearization in Birkhoff Coordinates}]\label{NLSparapara}
The equation associated to the Hamiltonian $H\circ\Phi$ can be written as the following paradifferential system:
\begin{equation}\label{QLNLS444}
	\dot{Z} = \mathrm{i} E \Delta Z + \opbw\big(A(\Phi(Z);\xi)\big)\Phi(Z) + X_{\mathcal{H}^{(4)}}(Z) + R(Z) + R_{5}(Z) + \widetilde{G}(Z)\,,
\end{equation}
where the remainder terms satisfy
\begin{align}\label{stimaRRR}
	\|R(Z)\|_{s,M} &\lesssim \|Z\|_{s_0,M}^{6}\|Z\|_{s,M} \qquad \forall s\ge s_0>4, \\
	\label{stima5}
	\|R_5(Z)\|_{s,M} &\lesssim \|Z\|_{\ell^1}^4 \|Z\|_{s,M}, \\
	\label{stima_G}
	\|\widetilde{G}(Z)\|_{s,M} &\lesssim M^2\|Z\|^8_{s_0,M}\|Z\|_{s,M}.
\end{align}
{Furthermore, the seminorms of the symbols satisfy the following
bounds: given $1<\widetilde{s}_0\leq s-1$, we have
\begin{equation}\label{realtaAAA2}
	\begin{aligned}
		|a_{2}(\Phi(Z))|_{\mathcal{N}^0_{s',M,0}} + |b_{2}(\Phi(Z))|_{\mathcal{N}^0_{s',M,0}} &\lesssim \|\Phi(Z)\|^{6}_{s',M}\,,
		\qquad \quad s'\in[\widetilde s_0,s], \\
		|\vec{a}_{1}(\Phi(Z))\cdot\xi|_{\mathcal{N}^1_{s',M,0}} &\lesssim \|\Phi(Z)\|^{6}_{s'+1,M}\,,
		\qquad s'\in[\widetilde s_0,s-1].
	\end{aligned}
\end{equation}}
\end{proposition}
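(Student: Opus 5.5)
The plan is to compute the vector field of $H\circ\Phi$ term by term using the decomposition \eqref{ham:bnf}:
\[
H\circ\Phi=H^{(2)}+\mathcal H^{(4)}+R^{(\geq6)}+H^{(\geq8)}\circ\Phi .
\]
The first two pieces give $\mathrm{i}E\Delta Z+X_{\mathcal H^{(4)}}(Z)$ directly. The third gives $R_5(Z):=X_{R^{(\geq 6)}}(Z)$, and \eqref{stima5} is exactly \eqref{bound:rem}. The substantive part is the last piece. Since $\Phi$ is symplectic, we have
\[
X_{H^{(\geq8)}\circ\Phi}(Z)=[d\Phi(Z)]^{-1}X_{H^{(\geq 8)}}(\Phi(Z)).
\]
Applying \eqref{paralin:8} at $V=\Phi(Z)$ gives
\[
X_{H^{(\geq8)}\circ\Phi}(Z)=\opbw(A(\Phi(Z);\xi))\Phi(Z)+R_{\mathrm{para}}(\Phi(Z))+\bigl([d\Phi(Z)]^{-1}-\id\bigr)X_{H^{(\geq 8)}}(\Phi(Z)).
\]

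I will set $R(Z):=[d\Phi(Z)]^{-1}R_{\mathrm{para}}(\Phi(Z))$. Using \eqref{bound:Rpara}, \eqref{close3} for $\Phi^{-1}$, and \eqref{Phi:equiv} (valid at both levels $s_0$ and $s$), one gets
\[
\|R(Z)\|_{s,M}\lesssim \|\Phi(Z)\|_{s_0,M}^6\|\Phi(Z)\|_{s,M}\lesssim\|Z\|_{s_0,M}^6\|Z\|_{s,M},
\]
which is \eqref{stimaRRR}. I will then set
\[
\widetilde G(Z):=\bigl([d\Phi(Z)]^{-1}-\id\bigr)\opbw(A(\Phi(Z);\xi))\Phi(Z).
\]
This is the step where the finite-rank property matters. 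By \eqref{treM} for $\Phi^{-1}$ (using $[d\Phi(Z)]^{-1}=d\Phi^{-1}(\Phi(Z))$), the operator $[d\Phi(Z)]^{-1}-\id$ has range in $\Pi_{\le 3M}$. Its output is therefore supported on frequencies $|n|\le 3M$, where $\langle n\rangle_M\sim M$. The argument of $\Pi_{\le 3M}$ is only ever estimated in a low norm. Concretely, I apply \eqref{close3} at regularity $s_0-2$ (or use the $\ell^1$ version \eqref{close2} together with $\ell^1\hookrightarrow$ low Sobolev norms at frequencies $\lesssim M$). Then I use that $\opbw(A)$ has order two, with symbol seminorms bounded by $\|\Phi(Z)\|^6_{s_0,M}$ by \eqref{realtaAAA2}. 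This gives
\[
\|\opbw(A)\Phi(Z)\|_{s_0-2,M}\lesssim\|\Phi(Z)\|_{s_0,M}^7 .
\]

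Passing from $s_0-2$ to $s$ on frequencies $\le 3M$ costs a factor $M^{s-s_0+2}$. I must redistribute this as $M^2$ times $\|Z\|_{s,M}$, using $M^{s-s_0}\|u\|_{s_0,M}\le\|u\|_{s,M}$, which holds since $\langle n\rangle_M\ge M$. Combined with the $\|Z\|^2_{\ell^1}\lesssim \|Z\|^2_{s_0,M}$ factor from \eqref{close3}, this yields \eqref{stima_G}. The main obstacle I expect is precisely this bookkeeping: ensuring the two derivatives lost by $\opbw(A)$ appear only as $M^2$ and are never placed on a high norm. I would first try to arrange this via the frequency localization $\Pi_{\le 3M}$ and the inequality $\langle n\rangle_M\le \sqrt{10}\,M$ on its range. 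If needed, I would instead dualize, using that $d\Phi^{-1}-\id$ is a finite sum of $X_F$-type trilinear terms, whose $\ell^1$-bounds \eqref{bound:dXF} allow all derivatives to be absorbed by the cutoff.

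Finally, the symbol bounds \eqref{realtaAAA2} follow from the explicit formulas \eqref{simboa2}. The functions $a_2,b_2$ are smooth functions of $v,\bar v$ vanishing to order six, since $h'$ vanishes to first order, so $(h')^2|v|^2$ is of degree at least six. Tame Moser composition estimates in the $\|\cdot\|_{s',M}$ norm (Appendix \ref{app:para}), valid for $s'\ge\widetilde s_0>1$, then give $\lesssim\|\Phi(Z)\|^6_{s',M}$. For $\vec a_1$, one extra derivative falls on $v$, giving the index $s'+1$. The factor $|\xi|\le\langle\xi\rangle_M$ accounts for order one, and only $n=0$ derivatives in $\xi$ are needed.
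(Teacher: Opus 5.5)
Your proof is correct. It differs from the paper's mainly in where you split the term $[d\Phi(Z)]^{-1}X_{H^{(\geq 8)}}(\Phi(Z))$.

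The paper first writes this term as $X_{H^{(\geq8)}}(\Phi(Z))+([d\Phi(Z)]^{-1}-\id)X_{H^{(\geq8)}}(\Phi(Z))$, and only then paralinearizes the first piece. Thus $R(Z)=R_{\mathrm{para}}(\Phi(Z))$, and $\widetilde G$ is $([d\Phi(Z)]^{-1}-\id)$ applied to the full, non-paralinearized quasilinear vector field. The paper then loses only $M^2$ via \eqref{treM}, passing from $s$ to $s-2$, and closes with the tame Moser-type bound \eqref{tame:X8} for $X_{H^{(\geq8)}}$ in $H^{s-2}$.

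You paralinearize first. You put $([d\Phi(Z)]^{-1}-\id)R_{\mathrm{para}}(\Phi(Z))$ into $R$, which is harmless by \eqref{close3}, and let $([d\Phi(Z)]^{-1}-\id)$ act on $\opbw(A)\Phi(Z)$. The sum $R+\widetilde G$ is identical in both versions, so nothing downstream changes.

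What your route buys is that the bound on $\widetilde G$ needs only three ingredients:
\begin{itemize}
\item the action estimate of Lemma \ref{azione}(ii) at low regularity;
\item the symbol bounds \eqref{realtaAAA2}, which you prove independently via Moser, so there is no circularity;
\item the elementary inequality $M^{s-s_0}\|u\|_{s_0,M}\le\|u\|_{s,M}$, which follows from $\langle n\rangle_M\ge M$.
\end{itemize}
This avoids the tame estimate \eqref{tame:X8} for the full nonlinear vector field altogether.

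You could shorten your argument by skipping the detour through $s_0-2$. Drop from $s$ to $s-2$ as the paper does, and apply Lemma \ref{azione}(ii) directly. This gives $\|\opbw(A(\Phi(Z);\xi))\Phi(Z)\|_{s-2,M}\lesssim\|\Phi(Z)\|_{s_0,M}^6\|\Phi(Z)\|_{s,M}$, using index $s_0$ for the order-two part and $s_0-1>2$ for the order-one part. Your fallback \emph{dualization} argument is not needed. Writing $[d\Phi(Z)]^{-1}=d\Phi^{-1}(\Phi(Z))$ before invoking \eqref{close3} and \eqref{treM} for the inverse map is a small point the paper leaves implicit, and you handled it correctly.
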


\begin{proof}

%%%%%%%%%%%%

By \eqref{ham:bnf} the normalized NLS equation is given by
\begin{equation}\label{eqforZ}
\dot{Z} = \mathrm{i} E \Delta Z + X_{\mathcal{H}^{(4)}}(Z) + X_{R^{(\ge 6)}}(Z) + [d\Phi(Z)]^{-1} X_{H^{(\ge 8)}}(\Phi(Z)).
\end{equation}
By \eqref{paralin:8}
\begin{align*}
[d\Phi(Z)]^{-1} X_{H^{(\ge 8)}}(\Phi(Z)) &= X_{H^{(\ge 8)}}(\Phi(Z)) + \big([d\Phi(Z)]^{-1} - \id\big)X_{H^{(\ge 8)}}(\Phi(Z))\\
&=\opbw(A(\Phi(Z); \xi))\Phi(Z) + R_{\mathrm{para}}(\Phi(Z))+\widetilde{G}(Z),
\end{align*}
where
\[
\widetilde{G}(Z) := \big([d\Phi(Z)]^{-1} - \id\big)X_{H^{(\ge 8)}}(\Phi(Z)).
\]
Substituting this into \eqref{eqforZ} yields \eqref{QLNLS444}, with 
\begin{align*}
    R(Z) := R_{\mathrm{para}}(\Phi(Z)), \qquad
    R_5(Z) := X_{R^{(\ge 6)}}(Z).
\end{align*}
We prove the remainder estimates \eqref{stimaRRR}-\eqref{stima_G}. By the bounds \eqref{bound:Rpara} and \eqref{Phi:equiv} 
$$\|R(Z)\|_{s,M} \lesssim \|\Phi(Z)\|_{s_0,M}^6 \|\Phi(Z)\|_{s,M} \lesssim \|Z\|_{s_0,M}^6 \|Z\|_{s,M}.$$
By the smallness condition \eqref{piccolezza-norma-bassa} and the estimate \eqref{bound:rem},
$$\|R_5(Z)\|_{s,M} = \|X_{R^{(\ge 6)}}(Z)\|_{s,M} \lesssim \|Z\|_{\ell^1}^4 \|Z\|_{s,M}.$$
In order to bound $\|\widetilde{G}(Z)\|_{s, M}$ we first provide an estimate for the non homogeneous vector field $X_{H^{(\geq 8)}}$. By Lemma \ref{lem:Moser:sM} and Corollary \ref{cor:tame:prod} {we have, for $s_1>1$,
\begin{equation}\label{tame:X8}
\begin{aligned}
\| \Delta(h(|u|^2)) h'(|u|^2) u  \|_{s-2, M}\lesssim& \| \Delta(h(|u|^2)) \|_{s-2, M} \| h'(|u|^2) \|_{s_1, M} \| u\|_{s_1, M}\\
&+\| \Delta(h(|u|^2)) \|_{s_1, M} \| h'(|u|^2) \|_{s-2, M} \| u\|_{s_1, M}\\
&+\| \Delta(h(|u|^2)) \|_{s_1, M} \| h'(|u|^2) \|_{s_1, M} \| u\|_{s-2, M}\\
\lesssim& \| h(|u|^2) \|_{s, M}  \| |u|^2\|_{s_1, M} \| u\|_{s_1, M}\\
&+\| h(|u|^2) \|_{s_1+2, M} \| |u|^2\|_{s-2, M} \| u\|_{s_1, M}\\
&+\| h(|u|^2) \|_{s_1+2, M} \| |u|^2\|_{s_1, M} \| u\|_{s-2, M}\\
\lesssim& \| u\|_{L^{\infty}}^2 \| u\|_{s, M} \| u\|_{s_1, M}^4\\
&+\| u\|_{L^{\infty}}^2 \| u\|_{s_1+2, M} \| u\|^3_{s_1, M} \| u\|_{s-2, M}\\
\lesssim& \| u\|_{s_1+2, M}^6 \| u\|_{s, M}\lesssim \| u\|_{s_0, M}^6 \| u\|_{s, M}.
%&+\| u\|_{L^{\infty}}^2 \| u\|_{s_0+2, M} \| u\|^3_{s_0, M} \|u\|_{s-2, M}
\end{aligned}
\end{equation}
}
By Theorem \ref{thm:wbnf}-(i)
\begin{align*}
\|\big([d\Phi(Z)]^{-1} - \id\big) h\|_{s,M} &\lesssim M^2 \|\big([d\Phi(Z)]^{-1} - \id\big) h\|_{s-2,M}\\
&\stackrel{\eqref{close3}}{\lesssim} M^2 \|Z\|_{s_0,M}^2 \|h\|_{s-2,M}.
\end{align*}

Set $h = X_{H^{(\ge 8)}}(\Phi(Z))$. Applying the tame estimate \eqref{tame:X8} evaluated at $\Phi(Z)$,
$$\|X_{H^{(\ge 8)}}(\Phi(Z))\|_{s-2,M} \lesssim \|\Phi(Z)\|_{s_0,M}^6 \|\Phi(Z)\|_{s,M} \lesssim \|Z\|_{s_0,M}^6 \|Z\|_{s,M}.$$
Combining these inequalities yields
$$\|\widetilde{G}(Z)\|_{s,M} \lesssim M^2 \|Z\|_{s_0,M}^2 \big(\|Z\|_{s_0,M}^6 \|Z\|_{s,M}\big) = M^2 \|Z\|_{s_0,M}^8 \|Z\|_{s,M}.$$
{It remains to verify \eqref{realtaAAA2}. The algebraic formulas
for the symbols are those derived in \cite{FGI}. Since the double
zero of $h$ gives $h'(r)=r q(r)$ with $q$ smooth, the coefficients
$a_2$, $b_2$, and $\vec a_1$ in \eqref{simboa2} vanish to order at
least six in $V$ (counting one spatial derivative in $\vec a_1$).
The weighted Moser estimate \eqref{Moser2} and the tame product
estimate in Corollary \ref{cor:tame:prod} therefore give
\eqref{realtaAAA2} at $V=\Phi(Z)$. The constants are uniform in
$M$ on the fixed small ball determined by
\eqref{piccolezza-norma-bassa}.}
\end{proof}

In view of the above result, we can write the paralinearized equation for the difference $W = Z - V^\lambda$:
\begin{equation}\label{eq:differenza}
	\begin{aligned}
		\partial_t W &= \opbw\big(-\mathrm{i} E|\xi|^2 + A(\Phi(Z);\xi)\big)W \\
		&\quad + \big( X_{\mathcal{H}^{(4)}}(W+V^{\lambda}) - X_{\mathcal{H}^{(4)}}(V^{\lambda}) \big) \\
		&\quad + \opbw\big(A(\Phi(Z);\xi)\big)(\Phi(Z)-Z) \\
		&\quad + \opbw\big(A(\Phi(Z);\xi)\big)V^{\lambda} \\
		&\quad + R(Z) + R_5(Z) + \widetilde{G}(Z).
	\end{aligned}
\end{equation}

%\begin{remark}\label{re:canc} {\color{red} Da sviluppare}
%	In principle, evaluating the difference between the two equations yield a paralinearized operator acting on $\Phi(Z)$, which induces a loss of derivatives. To bypass this obstruction, we rewrite the term by adding and subtracting $Z - V^{\lambda}$. In this way, the resulting terms can be managed . The loss of derivatives on $W = Z - V^\lambda$ is bypassed through paradifferential symmetrization, the term involving $\Phi(Z)-Z$ is of lower order (smoothing), and the derivative loss on $V^{\lambda}$ can be absorbed since the $\|\cdot\|_{s_0,M}$ norms of the Toy model solutions remain small in time.
%\end{remark}

\begin{remark}\label{re:canc}
A direct subtraction of the equations for $Z$ and $V^\lambda$
produces the order-two term
\[
    \opbw\big(A(\Phi(Z);\xi)\big)\Phi(Z),
\]
which cannot be treated directly as a forcing term in $H^s$, since
this would require two additional derivatives of $\Phi(Z)$. We
instead use the identity
\[
    \Phi(Z)
    = W+V^\lambda+\big(\Phi(Z)-Z\big),
    \qquad W:=Z-V^\lambda,
\]
and decompose
\[
\begin{aligned}
    \opbw\big(A(\Phi(Z);\xi)\big)\Phi(Z)
    ={}& \opbw\big(A(\Phi(Z);\xi)\big)W \\
    &+\opbw\big(A(\Phi(Z);\xi)\big)\big(\Phi(Z)-Z\big) \\
    &+\opbw\big(A(\Phi(Z);\xi)\big)V^\lambda .
\end{aligned}
\]
These three contributions are treated differently. The first one is
included in the principal operator acting on $W$ and is controlled by
the paradifferential symmetrization introduced below. The second one
is perturbative because $\Phi-\id$ is smoothing. Finally, the last
term acts on the prescribed Toy-model solution rather than on the
unknown $W$: the required higher Sobolev norms of $V^\lambda$ are
estimated explicitly, while the coefficients of
$A(\Phi(Z);\xi)$ remain small in the low norm. Thus, no loss of
derivatives is imposed on the unknown $W$.
\end{remark}

\subsection{Proof of energy estimates}
To perform the energy estimates, we symmetrize the principal part of the paradifferential operator in \eqref{eq:differenza}. From the definition \eqref{matriceA}, the principal symbol (of order $2$ in $\xi$) driving the linear and quasilinear parts is given by $-\mathrm{i}E(\id + A_2(\Phi(Z)))|\xi|^2$. Therefore, we focus on the matrix $E(\id+A_2(\Phi(Z)))$. 

We define the scalar functions
\begin{equation}\label{autovalori}
	\mu=\mu(\Phi(Z)) := \sqrt{1+2|\Phi(Z)|^2[h'(|\Phi(Z)|^2)]^2}\,, 
	\qquad a_2^{(1)}=a_2^{(1)}(\Phi(Z)) := \mu(\Phi(Z))-1\,,
\end{equation}
and we note that $\pm\mu$ are the eigenvalues 
of the matrix $E(\id+{A}_2(\Phi(Z)))$. 
We denote by $S$ the matrix of the eigenvectors of $E(\id+A_{2}(\Phi(Z)))$, more explicitly,
\begin{equation}\label{matriceS}
	\begin{aligned}
		S &= \begin{pmatrix}
			s_1 & s_2 \\
			\overline{s}_2 & s_1
		\end{pmatrix}\,, 
		\qquad S^{-1} = \begin{pmatrix}
			s_1 & -s_2 \\
			-\overline{s}_2 & s_1
		\end{pmatrix}\,, \\
		s_1 &:= \frac{1+|\Phi(Z)|^2[h'(|\Phi(Z)|^2)]^2+\mu}
		{\sqrt{2\mu (1+[h'(|\Phi(Z)|^2)]^2|\Phi(Z)|^2+\mu)}}\,, \\
		s_2 &:= \frac{-\Phi(Z)^2[h'(|\Phi(Z)|^2)]^2}{\sqrt{2\mu (1+[h'(|\Phi(Z)|^2)]^2|\Phi(Z)|^2+\mu)}}\,.
	\end{aligned}
\end{equation}
Since $S$ is the matrix of eigenvectors 
of $E(\id+A_{2}(\Phi(Z)))$, we have that
\begin{equation}\label{diago_algebra}
	S^{-1}E(\id+A_{2}(\Phi(Z)))S = E\diag(\mu)\,,\qquad s_1^2-|s_2|^2 = 1
\end{equation}
where we have used the notation \eqref{matriciozze}.

\begin{lemma}\label{stime_diago_max}
	Let $\widetilde{s}_0> 1$. 
	The symbols $a_2^{(1)}$ defined in \eqref{autovalori}, 
	and $s_1-1, s_2$ defined in \eqref{matriceS}, 
	satisfy the following estimates
	\begin{equation*}
		|a_2^{(1)}|_{\mathcal{N}^0_{s',M,0}}+|s_1-1|_{\mathcal{N}^0_{s',M,0}}
		+|s_2|_{\mathcal{N}^0_{s',M,0}} \lesssim \|\Phi(Z)\|^{6}_{s',M}\,, 
		\qquad \forall \, s' \in [\widetilde{s}_0, s] \,.
	\end{equation*}
\end{lemma}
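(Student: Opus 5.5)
The plan is to write each of the three symbols as $F(|u|^2)\cdot(\text{polynomial in } u,\bar u)$ with $u=\Phi(Z)$ and $F$ smooth. Since the symbols carry no $\xi$-dependence (order zero, $n=0$ seminorm), the $\mathcal{N}^0_{s',M,0}$ seminorm reduces to the $\|\cdot\|_{s',M}$ norm of a function of $x$. The estimates then follow from the weighted Moser estimate \eqref{Moser2} (Lemma \ref{lem:Moser:sM}) and the tame product estimate (Corollary \ref{cor:tame:prod}), exactly as for \eqref{realtaAAA2}.

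First, use the double zero of $h$: $h'(r)=r\,q(r)$ with $q$ smooth. Set $g(r):=r[h'(r)]^2=r^3q(r)^2$, which vanishes to order three at $r=0$. Then $\mu=\sqrt{1+2g(|u|^2)}$, and
\[
a_2^{(1)}=\mu-1=G_1(|u|^2),\qquad G_1(r):=\sqrt{1+2g(r)}-1=r^3\widetilde G_1(r),
\]
with $\widetilde G_1$ smooth near $0$. Similarly, with $D(r):=\sqrt{2\mu(1+g(r)+\mu)}$ (smooth and close to $2$ near $r=0$), we get
\[
s_1-1=\frac{1+g+\mu}{D}-1=G_2(|u|^2),
\]
where $G_2$ is smooth with $G_2(0)=0$. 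In fact $s_1^2-|s_2|^2=1$ gives $s_1-1=|s_2|^2/(s_1+1)$, so $G_2$ vanishes to order six. Finally,
\[
s_2=-u^2\,[h'(|u|^2)]^2/D=-u^2|u|^4\,G_3(|u|^2),
\]
with $G_3=q^2/D$ smooth. Thus every symbol is a product of a smooth function of $|u|^2$ with a monomial of degree at least six in $(u,\bar u)$.

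Second, apply the estimates. For $a_2^{(1)}$ and $s_1-1$, write $G_j(r)=r^3\widetilde G_j(r)$ and $|u|^6\widetilde G_j(|u|^2)=|u|^6\widetilde G_j(0)+|u|^6(\widetilde G_j(|u|^2)-\widetilde G_j(0))$. The algebra/tame property of $\|\cdot\|_{s',M}$ for $s'>1$ bounds the polynomial factor $|u|^6$ (and $u^2|u|^4$) by $\|u\|_{s',M}^6$, uniformly in $M$; this uses $\langle n\rangle_M\le\langle n-m\rangle_M+\langle m\rangle_M$ and \eqref{smoothMs}. The Moser estimate gives $\|\widetilde G_j(|u|^2)-\widetilde G_j(0)\|_{s',M}\lesssim C(\|u\|_{L^\infty})\|u\|_{s',M}^2$. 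The $L^\infty$ norm is controlled by the $\ell^1$ norm and hence by $\|u\|_{s',M}$, which stays small by \eqref{piccolezza-norma-bassa}, so the higher powers are absorbed and the overall bound is $\|u\|_{s',M}^6$. The same argument handles $s_2$.

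The expected obstacle is the treatment of the constant term in the Moser estimate in the $M$-weighted norm. A nonzero constant $c$ has $\|c\|_{s',M}=M^{s'}|c|$, which is huge, so one must never estimate a factor like $\widetilde G_j(|u|^2)$ or $1/D$ directly in $\|\cdot\|_{s',M}$. The fix is to always split off the constant, as above, and keep it as a scalar multiplier. Any product then contains at least one factor vanishing at $u=0$, and the tame product estimate applies with all weighted norms falling on functions with zero constant part. With this bookkeeping, the constants are uniform in $M$ on the small ball given by \eqref{piccolezza-norma-bassa}, which proves the lemma for all $s'\in[\widetilde s_0,s]$.
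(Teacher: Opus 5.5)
Your argument is correct and is essentially the paper's: the paper uses the explicit formulas \eqref{autovalori}, \eqref{matriceS} to write $\mu-1$, $s_1-1$, $s_2$ as smooth functions of the coefficients $a_2$, $b_2$, which are already bounded in \eqref{realtaAAA2} by the same Moser/tame-product reasoning you use; it relies on the double zero of $h$ and on the smallness \eqref{piccolezza-norma-bassa}, and your splitting-off of constants is the right bookkeeping in the $M$-weighted norm. The one point to tighten is that \eqref{piccolezza-norma-bassa} makes $\|\Phi(Z)\|_{s',M}$ small only for $s'\le s_0$. So for $s'>s_0$ you should not absorb $\|u\|_{s',M}^{8}$ into $\|u\|_{s',M}^{6}$; instead, let the extra factors fall on low norms via the tame form of Corollary \ref{cor:tame:prod}, or apply \eqref{Moser2} with $N=5$ directly, which gives $\|u\|_{L^\infty}^5\|u\|_{s',M}\lesssim\|u\|_{s',M}^6$.
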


\begin{proof}
{The proof follows by using the explicit expressions \eqref{autovalori}, \eqref{matriceS} and the estimates \eqref{realtaAAA2} combined with the fact that $h$ has a zero at the origin of order at least two and $\Phi(Z)$ satisfies the smallness condition \eqref{piccolezza-norma-bassa}.}
\end{proof}
We now study how the system \eqref{QLNLS444} transforms under the action of the diagonalizing maps 
\begin{equation}\label{diago_para}
	\begin{aligned}
		\mathcal{F}(\Phi(Z)) &:= \opbw(S^{-1}(\Phi(Z);\xi))\,, \qquad 
		\mathcal{G}(\Phi(Z)) &:= \opbw(S(\Phi(Z);\xi))\,.
	\end{aligned}
\end{equation}

\begin{lemma}\label{propMAPPA}
For any $s \ge s_0 > 4$,
	we have the following.
	
	\noindent
	$(i)$ The following bounds hold 
	\begin{equation}\label{stime-descentTOTA}
		\begin{aligned}
			\|\mathcal{F}(\Phi(Z))W\|_{s,M} + \|\mathcal{G}(\Phi(Z))W\|_{s,M} &\lesssim 
			\|W\|_{s,M}\big(1 + \|\Phi(Z)\|^{6}_{s_0,M}\big)\,,\\
			\|(\mathcal{F}(\Phi(Z))-\id)W\|_{s,M} + \|(\mathcal{G}(\Phi(Z))-\id)W\|_{s,M} &\lesssim
			\|W\|_{s,M}\|\Phi(Z)\|^{6}_{s_0,M}\,,
		\end{aligned}
	\end{equation} 
	for any $W\in \mathcal{H}^{s}(\T^2;\mathbb{C})$.
	
	\noindent
	$(ii)$ We have $\mathcal{G}(\Phi(Z))\circ\mathcal{F}(\Phi(Z)) = \id+R(\Phi(Z))$
	where the remainder $R$ satisfies 
	\begin{equation}\label{achille10}
		\|R(\Phi(Z))W\|_{s+2,M} \lesssim \|W\|_{s,M}\|\Phi(Z)\|^{6}_{s_0,M}\,,
	\end{equation}
		for any $W\in \mathcal{H}^{s}(\T^2;\mathbb{C})$.
        
	\noindent
	$(iii)$ For any $t\in [0,T]$ and $\widetilde{s}_0>1$,
	we have 
	$\partial_{t}\mathcal{F}(\Phi(Z))[\cdot] = \opbw(\partial_{t}S^{-1}(\Phi(Z)))$. Furthermore, for all $s' \in [\widetilde{s}_0, s-2]$,
	\begin{equation}\label{achille11}
		|\partial_{t}S^{-1}(\Phi(Z))|_{\mathcal{N}^{0}_{s',M,0}} \lesssim
		\|\Phi(Z)\|^{6}_{s'+2,M}\,,
		\quad
		\|\partial_{t}\mathcal{F}(\Phi(Z))W\|_{s,M} \lesssim \|W\|_{s,M}\|\Phi(Z)\|^{6}_{s_0,M}\,.
	\end{equation}
\end{lemma}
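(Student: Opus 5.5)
The plan is to treat all three items as consequences of the weighted paradifferential calculus of Appendix \ref{app:para}: the action estimate (Lemma \ref{azione}), the composition theorem, and the symbol bounds of Lemma \ref{stime_diago_max}. We use that $S^{\pm1}$ are matrices of order-zero symbols independent of $\xi$, namely $S^{\pm1}=\id+(S^{\pm1}-\id)$ with entries $s_1-1$, $\pm s_2$. The smallness assumption \eqref{piccolezza-norma-bassa} lets us drop lower powers of $\|\Phi(Z)\|_{s_0,M}$ throughout.

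\emph{Item (i).} We write
\[
\mathcal{F}(\Phi(Z))-\id=\opbw(S^{-1}-\id)+\big(\opbw(\id)-\id\big).
\]
The second term is zero, since the symbol $1$ has only the zero Fourier mode in $x$ and $\chi_\eps(0)=1$. For the first term we apply the action estimate for order-zero symbols. Its operator norm on $\|\cdot\|_{s,M}$ is controlled by $|s_1-1|_{\mathcal{N}^0_{\widetilde s_0,M,0}}+|s_2|_{\mathcal{N}^0_{\widetilde s_0,M,0}}$ for some $\widetilde s_0>1$; only low regularity of the symbol is needed, since no derivatives in $\xi$ or high norms are required for Bony--Weyl operators with $x$-frequency cutoff. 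Lemma \ref{stime_diago_max} with $s'=\widetilde s_0\le s_0$ bounds this by $\|\Phi(Z)\|_{s_0,M}^6$. The first bound in \eqref{stime-descentTOTA} then follows by the triangle inequality, and the same argument applies to $\mathcal G$.

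\emph{Item (ii).} We use the symbolic composition theorem for Bony--Weyl operators:
\[
\opbw(a)\opbw(b)=\opbw(ab)+\opbw\big(\tfrac{1}{2\ii}\{a,b\}\big)+R_{\rm comp}.
\]
Since $S$ and $S^{-1}$ do not depend on $\xi$, the Poisson bracket and all higher-order terms in the expansion vanish, and $SS^{-1}=\id$ pointwise by \eqref{diago_algebra}. The main part is therefore $\opbw(\id)=\id$, and what remains is the composition remainder. For symbols of order $0$ and $0$ this remainder is smoothing of order $\rho$ for any $\rho$ allowed by the $x$-regularity of the symbols. Taking $\rho=2$ costs two extra derivatives on the symbols, which is available because the seminorms are taken at $s_0>4$, that is, $\widetilde s_0+2\le s_0$ with $\widetilde s_0>1$. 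We then bound the remainder bilinearly by
\[
|S-\id|\,|S^{-1}|+|S^{-1}-\id|\,|S|
\]
(the product $\id\cdot\id$ contributes no remainder) and apply Lemma \ref{stime_diago_max} to get \eqref{achille10}. The main obstacle is the precise remainder estimate in the $M$-weighted calculus: one must check that the smoothing gain is measured with $\langle\xi\rangle_M$, so that $\|\cdot\|_{s+2,M}$ is controlled uniformly in $M$. This is where I would use the appendix composition lemma as stated, counting regularity so that $s_0>4$ suffices.

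\emph{Item (iii).} The quantization \eqref{quantiWeyl} is linear in the symbol and the cutoff does not depend on $t$, so $\partial_t$ commutes with $\opbw$, giving $\partial_t\mathcal F=\opbw(\partial_tS^{-1})$. By the chain rule, $\partial_t S^{-1}$ is a smooth function of $\Phi(Z)$ and its conjugate, multiplied by $\partial_t\Phi(Z)=d\Phi(Z)[\dot Z]$. We use \eqref{QLNLS444} together with \eqref{close3} and the finite-rank property \eqref{treM} to bound $\|\partial_t\Phi(Z)\|_{s'-?,M}$. Since $\dot Z$ loses two derivatives, $\|d\Phi(Z)\dot Z\|_{s',M}\lesssim\|Z\|_{s'+2,M}(1+\dots)$. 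The coefficient functions vanish to order at least six at the origin, so with the Moser and tame product estimates (Lemma \ref{lem:Moser:sM}, Corollary \ref{cor:tame:prod}) we obtain the bound $\|\Phi(Z)\|^6_{s'+2,M}$, using \eqref{Phi:equiv} to pass between $Z$ and $\Phi(Z)$. Finally, the operator bound in \eqref{achille11} follows from the order-zero action estimate with $s'=\widetilde s_0$ and $\widetilde s_0+2\le s_0$, exactly as in (i).
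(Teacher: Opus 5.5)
Your plan follows the paper's proof item by item: the action estimate plus Lemma \ref{stime_diago_max} for (i), Proposition \ref{prop:compo} for (ii), and the chain rule plus the order-zero action estimate for (iii). However, the remainder bound you propose in (ii) does not give \eqref{achille10} uniformly in $M$.

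\textbf{The gap in (ii).} In the weighted seminorms the identity part of $S^{\pm1}$ is large: $|\id|_{\mathcal{N}^0_{\sigma,M,n}}=\|1\|_{\sigma,M}\sim M^{\sigma}$, so $|S^{\pm1}|_{\mathcal{N}^0_{\sigma,M,n}}\gtrsim M^{\sigma}$. Feeding $|S-\id|\,|S^{-1}|+|S^{-1}-\id|\,|S|$ into \eqref{composit2}, with symbol indices $s_0-2$ and $s_0$, therefore puts a factor as large as $M^{s_0}$ in front of $\|\Phi(Z)\|^6_{s_0,M}$. All constants must be independent of $M$, which is super-exponentially large in $N$, so this step fails as written.

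\textbf{The fix.} Observing that $\id\cdot\id$ produces no remainder is not enough. What you need is that $R_\rho(a,b)=T_aT_b-T_{a\#_\rho b}$ is bilinear and vanishes identically whenever either argument is the constant symbol $\id$, because $T_{\id}=\id$, $\id\#_\rho b=b$ and $a\#_\rho\id=a$. Hence
\[
\opbw(S)\opbw(S^{-1})-\id=R_2(S-\id,\,S^{-1}-\id).
\]
Only $|S-\id|$ and $|S^{-1}-\id|$ then enter \eqref{composit2}. This gives a bound by $\|\Phi(Z)\|^{12}_{s_0,M}\lesssim\|\Phi(Z)\|^{6}_{s_0,M}$ under \eqref{piccolezza-norma-bassa}. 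This is exactly the separation of the constant identity parts made in the paper's proof. You did this correctly in (i), where you split off $\opbw(\id)=\id$, but dropped it in (ii).

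\textbf{Regularity count in (ii).} Proposition \ref{prop:compo} requires its base index to exceed $2$. With $\rho=2$ the symbols $S^{\pm1}-\id$ are therefore measured at index $s_0$ with base $s_0-2>2$, and this is precisely where $s_0>4$ is used. Your count ``$\widetilde s_0+2\le s_0$ with $\widetilde s_0>1$'' is not what the appendix provides; it is harmless here only because $s_0>4$.

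\textbf{A simpler route in (iii).} You do not need to pass $\partial_t\Phi(Z)=d\Phi(Z)[\dot Z]$ through \eqref{QLNLS444} and leave the index as ``$s'-?$''. Since $\Phi$ is symplectic, $d\Phi(Z)[\dot Z]=X_H(\Phi(Z))$, i.e.\ $\Phi(Z)$ solves \eqref{NLS}. This gives $\|\partial_t\Phi(Z)\|_{s',M}\lesssim\|\Phi(Z)\|_{s'+2,M}$ directly, as in the paper.
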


\begin{proof}
	$(i)$ The bounds \eqref{stime-descentTOTA} follow by the action estimates in Lemma \ref{azione} and the bounds in Lemma \ref{stime_diago_max}.
	
	{
	\noindent
	$(ii)$ We apply Proposition \ref{prop:compo} 
	for the composition of the operators in \eqref{diago_para} and we use the bounds in Lemma \ref{stime_diago_max}. Before applying the remainder estimate, we separate the constant identity parts of $S$ and $S^{-1}$, whose quantizations compose exactly; thus only $S-\id$ and $S^{-1}-\id$ enter in the bound.
	
	\noindent
	$(iii)$ We note that the time derivative of the symbol $S^{-1}(\Phi(Z))$ involves the time derivative of $\Phi(Z)$. By the chain rule, $\partial_t s_1(\xi) = d_v s_1 [\partial_t \Phi(Z)] + d_{\overline{v}} s_1 [\partial_t \overline{\Phi(Z)}]$. Since $\Phi(Z)$ solves the equation \eqref{NLS}, we have 
    \begin{equation}\label{time:deriv}
    \|\partial_t \Phi(Z)\|_{s',M} \lesssim \|\Phi(Z)\|_{s'+2, M}.
    \end{equation}
    Direct inspection using the explicit structure of the symbols $s_1,s_2$ in \eqref{matriceS} gives the first estimate in \eqref{achille11} for every admissible $s'$. Taking then $s'=s_0-2>2$ and applying \eqref{actionSob} gives the second estimate.
	}
\end{proof}

We are now in position to state the following proposition.

\begin{lemma}\label{diagovera}
	Recalling \eqref{matriceA}, \eqref{matriceS}, and \eqref{autovalori}, we have
	\begin{multline*}
	    \mathcal{F}(\Phi(Z))\opbw(-\mathrm{i} E |\xi|^2+A(\Phi(Z);\xi))\mathcal{G}(\Phi(Z))[\cdot] = \\
			\quad -\mathrm{i} E\opbw\big(\mu(\Phi(Z))|\xi|^{2}\big)[\cdot]
			- \mathrm{i} \opbw\big(\vec{a}_1^{(1)}(\Phi(Z))\cdot\xi\big)[\cdot] \id + Q(\Phi(Z))[\cdot],
	\end{multline*}
	where $\vec{a}_1^{(1)}(\Phi(Z))$ has real components and $Q(\Phi(Z))$ is a linear operator satisfying
	\begin{equation}\label{stima-smooth}
		\|Q(\Phi(Z))Y\|_{s,M} \lesssim \|\Phi(Z)\|_{s_0,M}^6 \|Y\|_{s,M}
	\end{equation}
    for any $Y\in \mathcal{H}^s$.
\end{lemma}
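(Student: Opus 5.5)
We compose the three paradifferential operators with the symbolic calculus of Appendix~\ref{app:para} (Proposition~\ref{prop:compo}), tracking the expansion only to the first two orders. All symbols involved ($S$, $S^{-1}$, $A$) have spatial regularity controlled by Lemma~\ref{stime_diago_max} and \eqref{realtaAAA2}. Write the full symbol as $P(\xi):=-\mathrm{i}E(\id+A_2(\Phi(Z)))|\xi|^2-\mathrm{i}(\vec a_1\cdot\xi)\id$. The Bony--Weyl composition formula gives
\[
\opbw(a)\circ\opbw(b)=\opbw\big(ab+\tfrac{1}{2\mathrm{i}}\{a,b\}\big)+\mathcal R,
\]
where $\mathcal R$ gains two derivatives relative to the total order. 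This holds because in Weyl quantization the second-order term of the expansion is the first one beyond the Poisson bracket.

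We apply this twice. First we compute $P\circ\opbw(S)$, and then $\opbw(S^{-1})\circ(\cdot)$. Since $S$ and $S^{-1}$ are independent of $\xi$ (order $0$), the zeroth-order product is $S^{-1}PS$. By \eqref{diago_algebra} its principal part equals $-\mathrm{i}E\mu|\xi|^2$. The first-order part is $-\mathrm{i}(\vec a_1\cdot\xi)S^{-1}S=-\mathrm{i}(\vec a_1\cdot\xi)\id$, up to the identity correction of Lemma~\ref{propMAPPA}(ii), which is $2$-smoothing. The Poisson-bracket corrections are of order $1$, since $\nabla_\xi|\xi|^2=2\xi$ is paired with $\nabla_x S$. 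They have the form
\[
\tfrac{1}{2\mathrm{i}}\big(\{S^{-1},-\mathrm{i}E(\id+A_2)|\xi|^2 S\}+S^{-1}\{-\mathrm{i}E(\id+A_2)|\xi|^2,S\}\big),
\]
and together with the terms from $\vec a_1$ they produce a matrix symbol of order one. We would then define $\vec a_1^{(1)}$ as the diagonal part of this matrix. Its off-diagonal part must be shown to vanish or to be absorbable.

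The main obstacle is this first-order structure: diagonality, and reality of $\vec a_1^{(1)}$. For reality we would exploit the symmetry $S^{-1}=ES^{*}E$ (from $s_1^2-|s_2|^2=1$, $s_1$ real) and the fact that $\opbw(P)$ is the paralinearization of a Hamiltonian vector field, so $\opbw(-\mathrm{i}\,\cdot)$ of a self-adjoint-type symbol. In the Weyl quantization, conjugation by a matrix $S$ that is $E$-unitary preserves $E$-self-adjointness up to order $0$. Hence the order-one part is $-\mathrm{i}$ times an $E$-Hermitian symbol, whose diagonal entries are real. For the off-diagonal first-order entries, we would compute directly using $\nabla_x s_1 \,s_1-\nabla_x \bar s_2\, s_2$-type identities from differentiating $s_1^2-|s_2|^2=1$. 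If they do not cancel exactly, they are still of order one with coefficients $O(\|\Phi(Z)\|^6)$ and small; they could then be removed by an additional order-$(-1)$ off-diagonal paradifferential correction. We note, however, that the statement asserts the form with only diagonal first-order terms, so we expect the Weyl symmetry to give the cancellation, as in \cite{FIJMPA,FGI}. This is the step we would verify by explicit computation.

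Finally, $Q$ collects three contributions. The first is the composition remainders, which have order $0$ after two derivatives are gained. The second is the order-$0$ symbolic terms (second derivatives of $S$, and products of $\nabla_x S$ with $\vec a_1$). The third is the term $\opbw(S^{-1})\opbw(\mathrm{i}E|\xi|^2)$ combined with the $-\mathrm{i}E|\xi|^2$ part, whose principal parts are already included above. Each of these is bounded on $H^s$ with the $\|\cdot\|_{s,M}$ weight by Lemma~\ref{azione} and Proposition~\ref{prop:compo}, with seminorms controlled via Lemma~\ref{stime_diago_max} and \eqref{realtaAAA2} at $s'=s_0-2>2$. Since every such term contains at least one factor among $S-\id$, $S^{-1}-\id$, $A_2$, $\vec a_1$, this yields \eqref{stima-smooth}, uniformly in $M$.
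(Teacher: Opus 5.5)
Your overall scheme coincides with the paper's: compose with Proposition~\ref{prop:compo} at $\rho=2$ on the order-two block, use $S^{-1}E(\id+A_2)S=\mu E$ from \eqref{diago_algebra}, and send all order-zero contributions to $Q$ via Lemma~\ref{stime_diago_max}. The gap is that you never prove the actual content of the lemma. That content is that the first-order part of the conjugated symbol is exactly $-\mathrm{i}(\vec a_1^{(1)}\cdot\xi)\id$ with $\vec a_1^{(1)}$ real. You defer this to an ``explicit computation'' you do not carry out. Your fallback, an extra order $-1$ off-diagonal correction, would change the conjugating maps $\mathcal F,\mathcal G$, so it does not prove the statement as written. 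Your $E$-unitarity heuristic does not close the gap either. An $E$-Hermitian first-order matrix symbol may have nonzero off-diagonal entries and two different real diagonal entries. So it gives neither diagonality nor the scalar form proportional to $\id$; the latter would also need the real-to-real structure, which you do not invoke. Taking ``the diagonal part'' as $\vec a_1^{(1)}$ is therefore not enough.

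The missing step is a short algebraic identity. Set $B:=E(\id+A_2)$, $D:=S^{-1}BS=\mu E$ and $K_j:=S^{-1}\partial_{x_j}S$. Using $\partial_{x_j}S^{-1}=-S^{-1}(\partial_{x_j}S)S^{-1}$, the bracket terms you wrote reduce to
\[
\sum_{j=1}^2\big((\partial_{x_j}S^{-1})BS-S^{-1}B\,\partial_{x_j}S\big)\xi_j=-\mu\sum_{j=1}^2\big(K_jE+EK_j\big)\xi_j .
\]
For any $2\times2$ matrix $K=(k_{il})$ one has $KE+EK=\sm{2k_{11}}{0}{0}{-2k_{22}}$, so the off-diagonal first-order terms vanish identically. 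Here $k_{11}=s_1\partial_{x_j}s_1-s_2\partial_{x_j}\overline{s_2}$ and $k_{22}=s_1\partial_{x_j}s_1-\overline{s_2}\partial_{x_j}s_2$. Differentiating $s_1^2-|s_2|^2=1$, with $s_1$ real, gives $k_{11}=-k_{22}=\mathrm{i}\Im(\overline{s_2}\partial_{x_j}s_2)$. Hence the bracket contribution is $-2\mathrm{i}\mu\Im(\overline{s_2}\nabla_xs_2)\cdot\xi\,\id$.

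Add $S^{-1}(-\mathrm{i}\vec a_1\cdot\xi)S=-\mathrm{i}\vec a_1\cdot\xi\,\id$. This is an exact pointwise identity, so Lemma~\ref{propMAPPA}(ii) plays no role here. The result is $\vec a_1^{(1)}=\vec a_1+2\mu\Im(\overline{s_2}\nabla_xs_2)$, which is real. This is precisely the paper's computation; with it in place, your treatment of $Q$ goes through as in the paper.
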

{
\begin{proof}
	Set
	\[
		B:=E(\id+A_2(\Phi(Z))),\qquad
		D:=S^{-1}BS=\mu E,\qquad K_j:=S^{-1}\partial_{x_j}S.
	\]
	We apply Proposition \ref{prop:compo} with $\rho=2$ to the order-two block, and with $\rho=1$ to the order-one block. Besides the algebraic identity \eqref{diago_algebra}, the first-order contribution generated by the conjugation of the order-two block is
	\[
		\sum_{j=1}^2\Big((\partial_{x_j}S^{-1})BS
		-S^{-1}B(\partial_{x_j}S)\Big)\xi_j
		=-\sum_{j=1}^2(K_jD+DK_j)\xi_j.
	\]
	A direct computation from \eqref{matriceS} gives
	\[
		-\sum_{j=1}^2(K_jD+DK_j)\xi_j
		=-2\mathrm{i}\mu\sum_{j=1}^2
		\operatorname{Im}\big(\overline{s_2}\,\partial_{x_j}s_2\big)\xi_j\,\id.
	\]
	Consequently,
	\[
		\vec a_1^{(1)}
		=\vec a_1+2\mu\operatorname{Im}(\overline{s_2}\nabla_xs_2)
	\]
	has real components. All the terms left after this order-one contribution are absorbed into the remainder. Indeed,  they satisfy \eqref{stima-smooth} by Proposition \ref{prop:compo} with  index $s_0-2>2$,  and by Lemma \ref{stime_diago_max}.
\end{proof}
}

\begin{proposition}[{\bf Diagonalization at order $2$}]\label{diago2max}
	Let $s\geq s_0>4$ and $W\in \mathcal{C}([0,T],\mathcal{H}^s(\T^2))\cap \mathcal{C}^1([0, T], \mathcal{H}^{s-2}(\T^2))$ be a solution of the equation \eqref{eq:differenza} and set 
	\begin{equation}\label{cambio1}
		W_1 = \mathcal{F}(\Phi(Z))W\,,
	\end{equation}
	with $\mathcal{F}$ defined in \eqref{diago_para}. 
	Then $W_1\in \mathcal{C}([0,T],\mathcal{H}^s(\T^2))\cap \mathcal{C}^1([0, T], \mathcal{H}^{s-2}(\T^2))$ solves the equation
	\begin{equation}\label{QLNLS444KK}
		\begin{aligned}
			\dot{W}_1 ={}& \opbw(A^{(1)}(\Phi(Z);\xi))W_{1} \\
			&+ \mathcal{F}(\Phi(Z))\big[X_{\mathcal{H}^{(4)}}(W+V^{\lambda}) - X_{\mathcal{H}^{(4)}}(V^{\lambda})\big] \\
			&+ \mathcal{F}(\Phi(Z))\opbw(A(\Phi(Z);\xi))[\Phi(Z)-Z] \\
			&+ \mathcal{F}(\Phi(Z))\opbw(A(\Phi(Z);\xi))V^{\lambda} \\
			&+ \mathcal{F}(\Phi(Z))[R(Z)+R_5(Z)+\widetilde{G}(Z)] \\
			&+ Q(\Phi(Z))W_1 + \widetilde{Q}(\Phi(Z))W,
		\end{aligned}
	\end{equation}
	where the new principal symbol is 
    \[
    A^{(1)}(\Phi(Z);\xi) := -\mathrm{i} E\mu(\Phi(Z))|\xi|^{2} - \mathrm{i}\big(\vec{a}_1^{(1)}(\Phi(Z))\cdot\xi\big)\id,
    \]
    while $Q$ and $\widetilde{Q}$ are {bounded remainders} satisfying 
	\begin{equation}\label{stimaRRRKK}
		\|Q(\Phi(Z))Y\|_{s,M} + \|\widetilde{Q}(\Phi(Z))Y\|_{s,M} \lesssim \|Z\|_{s_0,M}^{6}\|Y\|_{s,M}\,
	\end{equation}
for any $Y \in \mathcal{H}^s(\mathbb{T}^2; \mathbb{C})$.

    The scalar symbol $a_2^{(1)}$ and the vector symbol $\vec{a}_1^{(1)}$ are real valued 
	and satisfy the following estimates: given $\widetilde{s}_0>1$,
	\begin{equation}\label{realtaAAA2KK}
		\begin{aligned}
		|a_{2}^{(1)}|_{\mathcal{N}^{0}_{s',M,0}} &\lesssim \|\Phi(Z)\|^{6}_{s',M}\,, 
			\quad &\forall \, s' \in [\widetilde{s}_0, s]\,,\\
			|\vec{a}_{1}^{(1)}\cdot\xi|_{\mathcal{N}^1_{s',M,0}} &\lesssim \|\Phi(Z)\|^{6}_{s'+1,M}\,,
			\quad &\forall \, s' \in [\widetilde{s}_0, s-1]\,.
		\end{aligned}
	\end{equation} 

\end{proposition}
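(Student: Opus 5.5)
We differentiate $W_1=\mathcal{F}(\Phi(Z))W$ in time, which is legitimate in $\mathcal{H}^{s-2}$ since $W\in\mathcal{C}^1([0,T],\mathcal{H}^{s-2})$. By Lemma \ref{propMAPPA}(iii), $\mathcal{F}$ is differentiable in time, and
\[
\dot W_1=\opbw(\partial_t S^{-1}(\Phi(Z)))W+\mathcal{F}(\Phi(Z))\dot W .
\]
The first term goes into $\widetilde{Q}(\Phi(Z))W$. Its bound follows from the second estimate in \eqref{achille11} combined with \eqref{Phi:equiv}, which gives $\|\Phi(Z)\|_{s_0,M}\lesssim\|Z\|_{s_0,M}$.

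For the second term we substitute \eqref{eq:differenza}. Applying $\mathcal{F}(\Phi(Z))$ to the forcing terms (the $\mathcal{H}^{(4)}$ difference, the $\Phi(Z)-Z$ term, the $V^\lambda$ term, and $R+R_5+\widetilde G$) produces exactly the corresponding lines of \eqref{QLNLS444KK}, so nothing needs to be done there.

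The principal term is $\mathcal{F}\opbw(-\ii E|\xi|^2+A)W$. To handle it we insert the approximate inverse: by Lemma \ref{propMAPPA}(ii),
\[
W=\mathcal{G}\mathcal{F}W-R(\Phi(Z))W=\mathcal{G}W_1-R(\Phi(Z))W .
\]
This gives
\[
\mathcal{F}\opbw(-\ii E|\xi|^2+A)W=\mathcal{F}\opbw(-\ii E|\xi|^2+A)\mathcal{G}W_1-\mathcal{F}\opbw(-\ii E|\xi|^2+A)R(\Phi(Z))W .
\]
Lemma \ref{diagovera} rewrites the first piece as $\opbw(A^{(1)})W_1+Q(\Phi(Z))W_1$, and $Q$ satisfies \eqref{stima-smooth}.

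The second piece is added to $\widetilde Q W$. The key point is that $R(\Phi(Z))$ gains two derivatives by \eqref{achille10}. The order-two operator $\opbw(-\ii E|\xi|^2+A)$ maps $\mathcal{H}^{s+2}$ to $\mathcal{H}^s$ with a constant that is uniform on the small ball \eqref{piccolezza-norma-bassa}; this uses Lemma \ref{azione} together with \eqref{realtaAAA2} at index $s_0-2$. Hence
\[
\|\mathcal{F}\opbw(\cdots)R(\Phi(Z))W\|_{s,M}\lesssim\|W\|_{s,M}\|Z\|_{s_0,M}^6 .
\]

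The main obstacle is making sure no step costs more than the order-two gain available from \eqref{achille10}. One concern is the constant part $-\ii E|\xi|^2$. It is not multiplied by a small coefficient, so its contribution to $\widetilde Q W$ is small only because $R(\Phi(Z))$ carries the factor $\|\Phi(Z)\|^6_{s_0,M}$. A second concern is that the weighted norm $\langle\xi\rangle_M$ keeps all constants independent of $M$, as required by the conventions of Section \ref{sec:funct}. Finally, the regularity $W_1\in\mathcal{C}([0,T],\mathcal{H}^s)\cap\mathcal{C}^1([0,T],\mathcal{H}^{s-2})$ follows from \eqref{stime-descentTOTA}, applied at levels $s$ and $s-2$, together with the time-derivative bound above.

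It remains to prove the symbol estimates \eqref{realtaAAA2KK}. The bound for $a_2^{(1)}=\mu-1$ is Lemma \ref{stime_diago_max}. For the first-order symbol we use
\[
\vec a_1^{(1)}=\vec a_1+2\mu\operatorname{Im}(\overline{s_2}\nabla_x s_2),
\]
from the proof of Lemma \ref{diagovera}. Its reality is shown there, and $\mu$ is real since it is a square root of a positive real quantity. We then apply \eqref{realtaAAA2} to $\vec a_1$. For the correction we use the tame product estimate (Corollary \ref{cor:tame:prod}) and Lemma \ref{stime_diago_max} at index $s'+1$. The derivative in $\nabla_x s_2$ costs one Sobolev index, which is why the range is restricted to $s'\le s-1$, and the product is at least of degree six because $s_2$ is itself of degree six or higher.
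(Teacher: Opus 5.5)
Your proposal is correct and follows the paper's proof: you differentiate $W_1=\mathcal{F}(\Phi(Z))W$ and put $[\partial_t\mathcal{F}]W$ into $\widetilde{Q}W$ via Lemma \ref{propMAPPA}(iii). You then insert $\mathcal{G}\mathcal{F}=\id+R$ from Lemma \ref{propMAPPA}(ii), absorbing the resulting error into $\widetilde{Q}W$ through the two-derivative gain \eqref{achille10}, and conclude with Lemma \ref{diagovera}. Your additional checks of the regularity of $W_1$ and of \eqref{realtaAAA2KK}, via $\vec a_1^{(1)}=\vec a_1+2\mu\operatorname{Im}(\overline{s_2}\nabla_x s_2)$, fill in details the paper leaves implicit. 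One caution: a constant has $\|\cdot\|_{s',M}$-norm of size $M^{s'}$, so treat the multiplier $|\xi|^2$ directly and write $\mu=1+a_2^{(1)}$ before applying the tame product estimate, rather than feeding constants into the weighted seminorms.
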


\begin{proof}
By a direct computation
	\begin{align}
		\dot{W}_1 ={}& [\partial_t\mathcal{F}(\Phi(Z))]W + \mathcal{F}(\Phi(Z))\dot{W} \nonumber\\ 
		={}& [\partial_t\mathcal{F}(\Phi(Z))]W + \mathcal{F}(\Phi(Z))\opbw(-\mathrm{i} E|\xi|^2+A(\Phi(Z);\xi))W \label{mare1}\\
		&+ \mathcal{F}(\Phi(Z))\big[X_{\mathcal{H}^{(4)}}(W+V^{\lambda}) - X_{\mathcal{H}^{(4)}}(V^{\lambda})\big] \nonumber \\
		&+ \mathcal{F}(\Phi(Z))\opbw(A(\Phi(Z);\xi))(\Phi(Z)-Z) \nonumber \\
		&+ \mathcal{F}(\Phi(Z))\opbw(A(\Phi(Z);\xi))V^{\lambda} \nonumber \\
		&+ \mathcal{F}(\Phi(Z))[R(Z)+R_5(Z)+\widetilde{G}(Z)]. \nonumber
	\end{align}
	The first term in \eqref{mare1} is a contribution to the remainder $\widetilde{Q}$ because of item $(iii)$ in Lemma \ref{propMAPPA}. Concerning the second term in \eqref{mare1}, we use item $(ii)$ in Lemma \ref{propMAPPA} to rewrite it as
    \begin{multline}\label{principale}
        \mathcal{F}(\Phi(Z))\opbw(-\mathrm{i} E |\xi|^2+A(\Phi(Z);\xi))W = \\\mathcal{F}(\Phi(Z))\opbw(-\mathrm{i} E|\xi|^2+A(\Phi(Z);\xi))\mathcal{G}(\Phi(Z))\mathcal{F}(\Phi(Z))W,
    \end{multline}
	modulo a term which is absorbed into $\widetilde{Q}(\Phi(Z))W$. By applying Lemma \ref{diagovera}, the principal block in \eqref{principale} equals $\opbw(A^{(1)}(\Phi(Z);\xi))W_{1}$, modulo a final contribution which is absorbed into the remainder $Q(\Phi(Z))W_1$.
\end{proof}
We define the following modified energy functional
\begin{equation}\label{new_modified_energy}
	\|W\|_{Z,s,M}^2 := \left(\opbw(\mu(\Phi(Z))^s)\langle D\rangle_M^{s} W_1, \, \langle D\rangle_M^{s} W_1\right)_{L^2},
\end{equation}
where $W_1 = \mathcal{F}(\Phi(Z))W$ as defined in \eqref{cambio1}, $\langle D \rangle_M$ is the Fourier multiplier with symbol $\langle \xi \rangle_M$, and the scalar product is the standard one in $L^2(\mathbb{T}^2;\mathbb{C}^2)$.

In the following we prove that, under the smallness assumption \eqref{piccolezza-norma-bassa}, the norm $\| \cdot\|_{Z, s, M}$ is equivalent to the Sobolev norm $\|\cdot\|_{s,M}$.

\begin{proposition}\label{modified_energy}
	Assume \eqref{piccolezza-norma-bassa}. Then there are universal constants $c_1,c_3>0$ depending only on $s$ such that 
	\begin{equation}\label{equivalenze_piccole}
		c_1\|W\|_{s,M}^2 \leq \|W\|_{Z,s,M}^2 \leq  c_3\|W\|_{s,M}^2
	\end{equation}
	for any $W \in \mathcal{H}^{s}(\mathbb{T}^2)$.
\end{proposition}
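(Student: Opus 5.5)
The plan is to compare the modified energy with the flat quantity $\|W_1\|_{s,M}^2=(\langle D\rangle_M^sW_1,\langle D\rangle_M^sW_1)_{L^2}$, and then to compare $W_1$ with $W$ through the bounds on $\mathcal F(\Phi(Z))$.

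\emph{Splitting off the perturbation.} We write $\mu(\Phi(Z))^s=1+(\mu^s-1)$. Since $\opbw(1)=\id$, we get
\[
\|W\|_{Z,s,M}^2=\|W_1\|_{s,M}^2+\big(\opbw(\mu^s-1)\langle D\rangle_M^{s}W_1,\langle D\rangle_M^{s}W_1\big)_{L^2}.
\]
The symbol $\mu^s-1$ has order zero. By \eqref{autovalori} it is a smooth function of $|\Phi(Z)|^2[h'(|\Phi(Z)|^2)]^2$ vanishing at $0$. Arguing as in Lemma \ref{stime_diago_max}, using the Moser estimate and the double zero of $h$, gives
\[
|\mu^s-1|_{\mathcal N^0_{\widetilde s_0,M,0}}\lesssim\|\Phi(Z)\|^6_{\widetilde s_0,M}\qquad\text{for some }1<\widetilde s_0\le s_0 .
\]
The action estimate of Lemma \ref{azione} at Sobolev index $0$ then yields $\|\opbw(\mu^s-1)\|_{\mathcal L(L^2)}\lesssim\|\Phi(Z)\|^6_{s_0,M}$. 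Here the relevant symbol norm is the $L^\infty$-type control of the coefficient, which is bounded by the $\widetilde s_0$-norm since $\widetilde s_0>1$. By Cauchy--Schwarz the correction term is bounded by $C\eps^6\|W_1\|_{s,M}^2$, using \eqref{piccolezza-norma-bassa}. For $\eps$ small this gives
\[
\tfrac12\|W_1\|_{s,M}^2\le\|W\|_{Z,s,M}^2\le2\|W_1\|_{s,M}^2 .
\]

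\emph{Comparing $W_1$ with $W$.} The upper bound follows from Lemma \ref{propMAPPA}(i): $\|W_1\|_{s,M}\le\|W\|_{s,M}(1+C\eps^6)$. For the lower bound, the same lemma gives
\[
\|(\mathcal F(\Phi(Z))-\id)W\|_{s,M}\lesssim\eps^6\|W\|_{s,M},
\]
hence $\|W_1\|_{s,M}\ge(1-C\eps^6)\|W\|_{s,M}$. Together with the previous step this yields \eqref{equivalenze_piccole} with, for instance, $c_1=1/4$ and $c_3=4$, after shrinking $\eps$. The approximate inverse $\mathcal G$ from Lemma \ref{propMAPPA}(ii) is not needed for this argument.

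\emph{Main obstacle.} The delicate point is ensuring that all constants are uniform in $M$. This requires applying the weighted action estimate for $\opbw$ of an order-zero symbol and the Moser bound for $\mu^s-1$ in the $\|\cdot\|_{s,M}$ scale, as collected in Appendix \ref{app:para}. In particular, we must check that $|\cdot|_{\mathcal N^0_{\widetilde s_0,M,0}}$ controls the $L^2\to L^2$ norm with an $M$-independent constant. This holds because $\langle n\rangle_M\ge\langle n\rangle$, so the $\|\cdot\|_{\widetilde s_0,M}$-norm dominates the $\ell^1$ norm of the Fourier coefficients of the symbol, which controls the operator norm through the Fourier-kernel representation \eqref{quantiWeyl}.
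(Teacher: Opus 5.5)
Your proposal is correct and follows the paper's proof essentially step by step. You split $\opbw(\mu^s)=\id+\opbw(\mu^s-1)$, bound the correction via the action estimate of Lemma \ref{azione} together with \eqref{piccolezza-norma-bassa}, and then compare $W_1$ with $W$ through the second bound in \eqref{stime-descentTOTA}. The one point worth stating explicitly, as the paper does, is that $\mu^s$ is real, so $\opbw(\mu^s)$ is self-adjoint. This makes $\|W\|_{Z,s,M}^2$ real-valued, which the two-sided inequality requires.
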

{
\begin{proof}
Recall \eqref{autovalori}.
By Lemma \ref{stime_diago_max} and the  Moser
estimate \eqref{Moser2:base},
\[
|\mu(\Phi(Z))^s-1|_{\mathcal{N}^0_{s_0,M,0}}
\lesssim \|\Phi(Z)\|_{s_0,M}^6.
\]
Since $\mu^s$ is real, $\opbw(\mu^s)$ is self-adjoint. Moreover,
$\opbw(\mu^s)=\id+\opbw(\mu^s-1)$, and Lemma \ref{azione} gives
\[
\big|\big(\opbw(\mu^s-1)Y,Y\big)_{L^2}\big|
\lesssim \|\Phi(Z)\|_{s_0,M}^6\|Y\|_{L^2}^2.
\]
Choosing $\eps$ in \eqref{piccolezza-norma-bassa} so that the last
constant is at most $1/2$, we obtain
	\begin{multline*}
		\frac12 \| \langle D \rangle_M^s \mathcal{F}(\Phi(Z)) W \|_{L^2}^2
		\le \left( \opbw(\mu^s) \langle D \rangle_M^s \mathcal{F}(\Phi(Z)) W, \, \langle D \rangle_M^s \mathcal{F}(\Phi(Z)) W \right)_{L^2}
        \\\le \frac32 \| \langle D \rangle_M^s \mathcal{F}(\Phi(Z)) W \|_{L^2}^2\,.
	\end{multline*}
Finally, the second bound in \eqref{stime-descentTOTA} directly yields
\[
(1-C\|\Phi(Z)\|_{s_0,M}^6)\|W\|_{s,M}
\leq \|\mathcal{F}(\Phi(Z))W\|_{s,M}
\leq(1+C\|\Phi(Z)\|_{s_0,M}^6)\|W\|_{s,M}.
\]
Possibly choosing $\eps$ smaller enough, this proves \eqref{equivalenze_piccole}.
\end{proof}
}

We are now in position to prove the main energy estimate.

\begin{proof}[Proof of Proposition \ref{pr:ener_est}]
We start with the following computation, which is  justified by first considering smooth solutions and then appealing to a standard density argument:
\begin{equation}\label{comincio}
	\begin{aligned}
		\frac{d}{dt}\|W\|_{Z,s,M}^2 &= \big( \opbw(\partial_t(\mu(\Phi(Z))^s)) \langle D \rangle_M^s W_1, \, \langle D \rangle_M^s W_1 \big)_{L^2} \\
		&\quad + 2\Re \big( \opbw(\mu(\Phi(Z))^s) \langle D \rangle_M^s \dot{W}_1, \, \langle D \rangle_M^s W_1 \big)_{L^2}.
	\end{aligned}
\end{equation}
Along the proof we will systematically use the inequality 
\[
\| W_1 \|_{s, M}\lesssim \| W\|_{s, M}
\]
provided by Lemma \ref{propMAPPA}-(i) in combination with the smallness condition \eqref{piccolezza-norma-bassa}.

The first term on the right-hand side of \eqref{comincio} involves the time derivative of the symbol $\mu^s$. By the chain rule
\[
\begin{aligned}
\partial_t\bigl(\mu(\Phi(Z))^s\bigr)
={}&
s\,\mu(\Phi(Z))^{s-2}
h'(|\Phi(Z)|^2)
\left(
h'(|\Phi(Z)|^2)
+2|\Phi(Z)|^2h''(|\Phi(Z)|^2)
\right)
\\
&
\left(
\overline{\Phi(Z)}\,\partial_t\Phi(Z)
+
\Phi(Z)\,\partial_t\overline{\Phi(Z)}
\right).
\end{aligned}
\]
{
Therefore, by Corollary \ref{cor:tame:prod}, the weighted
Moser estimate \eqref{Moser2}, the bound
\eqref{time:deriv}, Lemma \ref{stime_diago_max} and recalling
\eqref{autovalori}, since $s_0-2>2$ we have
\[
\big|\partial_t \mu^s(\Phi(Z))\big|_{\mathcal{N}^0_{s_0-2,M,0}}
=\|\partial_t \mu^s(\Phi(Z))\|_{s_0-2,M}
\lesssim \| \Phi(Z) \|_{s_0, M}^6.
\]
Hence, by Lemma \ref{azione} and Cauchy--Schwarz inequality, the first term in \eqref{comincio} can be estimated by $\|W\|_{s,M}^2\|Z\|_{s_0,M}^6$, up to constant factors.
}

We now focus on the second line in \eqref{comincio}. By using \eqref{QLNLS444KK} we infer that the second line in \eqref{comincio} is equal to 

\begin{align}
	&\quad 2\Re\big( \opbw(\mu(\Phi(Z))^s)\langle D\rangle_M^{s}\opbw(A^{(1)}(\Phi(Z);\xi)){W}_1, \, \langle D\rangle_M^{s} W_1\big)_{L^2}\label{a1}\\
	&+2\Re\big( \opbw(\mu(\Phi(Z))^s)\langle D\rangle_M^{s}\mathcal{F}(\Phi(Z))[X_{\mathcal{H}^{(4)}}(W+V^{\lambda})-X_{\mathcal{H}^{(4)}}(V^{\lambda})], \, \langle D\rangle_M^{s} W_1\big)_{L^2}\label{a2}\\
	&+2\Re\big( \opbw(\mu(\Phi(Z))^s)\langle D\rangle_M^{s}\mathcal{F}(\Phi(Z))\opbw(A(\Phi(Z);\xi))(\Phi(Z)-Z), \, \langle D\rangle_M^{s} W_1\big)_{L^2}\label{a3}\\
	&+2\Re\big( \opbw(\mu(\Phi(Z))^s)\langle D\rangle_M^{s}\mathcal{F}(\Phi(Z))\opbw(A(\Phi(Z);\xi))V^{\lambda}, \, \langle D\rangle_M^{s} W_1\big)_{L^2}\label{a7}\\
    &+2\Re\big( \opbw(\mu(\Phi(Z))^s)\langle D\rangle_M^{s} \mathcal{F}(\Phi(Z))  R(Z), \, \langle D\rangle_M^{s} W_1\big)_{L^2}\label{a9}\\
	&+2\Re\big( \opbw(\mu(\Phi(Z))^s)\langle D\rangle_M^{s} \mathcal{F}(\Phi(Z)) R_{5}(Z), \, \langle D\rangle_M^{s} W_1\big)_{L^2}\label{a4}\\
	&+2\Re\big( \opbw(\mu(\Phi(Z))^s)\langle D\rangle_M^{s}Q(Z)W_1, \, \langle D\rangle_M^{s} W_1\big)_{L^2}\label{a5}\\
	&+2\Re\big( \opbw(\mu(\Phi(Z))^s)\langle D\rangle_M^{s}\widetilde{Q}(Z)W, \, \langle D\rangle_M^{s} W_1\big)_{L^2}\label{a6}\\
	&+2\Re\big( \opbw(\mu(\Phi(Z))^s)\langle D\rangle_M^{s} \mathcal{F}(\Phi(Z)) \widetilde{G}(Z), \, \langle D\rangle_M^{s} W_1\big)_{L^2}\label{a8}.
\end{align}
By the Cauchy--Schwarz inequality, Lemma \ref{azione} and the bounds \eqref{stime-descentTOTA}, \eqref{realtaAAA2}, we estimate the order-two and order-one parts of $A$ separately. The term \eqref{a7} is bounded from above by
\[
\|W\|_{s,M}\Big(
\|Z\|_{s_0-2,M}^6\|V^{\lambda}\|_{s+2,M}
+\|Z\|_{s_0-1,M}^6\|V^{\lambda}\|_{s+1,M}\Big).
\]
We emphasize that the operator $\opbw(A)$ is of order 2, this reflects on the fact that $V^\lambda$ is estimated in the norm $\| \cdot \|_{s+2, M}$

Reasoning exactly in the same way, we infer that 
\begin{equation*}
	\begin{aligned}
	\eqref{a2}+\eqref{a3} \lesssim \|W\|_{s,M}\Big(&\|Z\|^6_{s_0-2,M}\|\Phi(Z)-Z\|_{s+2,M}
	+\|Z\|^6_{s_0-1,M}\|\Phi(Z)-Z\|_{s+1,M}\\
	&+\|X_{\mathcal{H}^{(4)}}(W+V^{\lambda})-X_{\mathcal{H}^{(4)}}(V^{\lambda})\|_{s,M}\Big).
	\end{aligned}
\end{equation*}

Analogously, by also using \eqref{stimaRRRKK}, we obtain 
	\begin{align*}{\eqref{a9}}&+\eqref{a4}+\eqref{a5}+\eqref{a6}+\eqref{a8}\\
    &\lesssim\|W_1\|_{s,M}\Big({\|R(Z)\|_{s,M}}+\|R_5(Z)\|_{s,M}+\|Z\|^6_{s_0,M}(\|W_1\|_{s,M}+\|W\|_{s,M})+\|\widetilde{G}(Z)\|_{s,M}\Big)\\
    &\lesssim\|W\|_{s,M}\Big({\|R(Z)\|_{s,M}}+\|R_5(Z)\|_{s,M}+\|Z\|^6_{s_0,M}\|W\|_{s,M}+\|\widetilde{G}(Z)\|_{s,M}\Big).
\end{align*}
%{where we also used \eqref{cambio1} and \eqref{stime-descentTOTA}. }
We are left with the estimate of the term \eqref{a1} which involves the leading order operators. Here, we need to see a cancellation, at the microlocal level, in order to overcome a loss of derivatives. Recall the principal symbol 
\[
A^{(1)}(\Phi(Z);\xi) = -\mathrm{i} E\mu(\Phi(Z))|\xi|^2 - \mathrm{i} \big(\vec{a}_1^{(1)}(\Phi(Z))\cdot\xi\big)\id
\]
defined in Proposition \ref{diago2max}. Therefore, we have 
\begin{equation*}
	\begin{split}
		\eqref{a1} =& -2\Re\big(\opbw(\mu(\Phi(Z))^s)\langle D\rangle_M^{s}\opbw(\mathrm{i} E\mu(\Phi(Z))|\xi|^2\\
    &+\mathrm{i}\big(\vec{a}_1^{(1)}(\Phi(Z))\cdot\xi\big)\id){W}_1, \, \langle D\rangle_M^{s} W_1\big)_{L^2}.
	\end{split}
\end{equation*}
The term of order $1$ is bounded from above by $\|Z\|_{s_0,M}^6\|W\|_{s,M}^2$ thanks to Proposition \ref{prop:compo}, used with $\rho=1$. For the principal term of order $2$, we use Proposition \ref{prop:compo} with $\rho=2$, and we explicitly evaluate the principal symbol of the commutator arising from the Poisson bracket:
\begin{align*}
	\{ \mu^s (M^2 + |\xi|^2)^s, \, \mu |\xi|^2 \} 
	&= \nabla_\xi \big( \mu^s \langle\xi\rangle_M^{2s} \big) \cdot \nabla_x \big( \mu |\xi|^2 \big) 
	\,-\, \nabla_x \big( \mu^s \langle\xi\rangle_M^{2s} \big) \cdot \nabla_\xi \big( \mu |\xi|^2 \big) \\
	&= \Big( \mu^s \cdot 2s \langle\xi\rangle_M^{2s-2} \xi \Big) \cdot \Big( |\xi|^2 \nabla_x \mu \Big) 
	\,-\, \Big( s \mu^{s-1} \langle\xi\rangle_M^{2s} \nabla_x \mu \Big) \cdot \Big( 2\mu \xi \Big) \\
	&= 2s \mu^s \langle\xi\rangle_M^{2s-2} (\xi \cdot \nabla_x \mu) \Big[ |\xi|^2 - \langle\xi\rangle_M^2 \Big] \\
	&= -2s M^2 \mu^s \langle\xi\rangle_M^{2s-2} (\xi \cdot \nabla_x \mu)\,.
\end{align*}
To bound the action of this operator on the modified energy norm, we evaluate the supremum of its symbol relative to the Sobolev weight $\langle\xi\rangle_M^{2s}$: 
\[
\sup_{\xi \in \mathbb{R}^2} \frac{\big| M^2 \langle\xi\rangle_M^{2s-2} \xi \big|}{\langle\xi\rangle_M^{2s}} = \sup_{r \ge 0}  \frac{M^2\,r}{M^2 + r^2} = \frac{M}{2}\,.
\]
Let
\[
a(x,\xi)
=
\mu(x)^s\nabla_x\mu(x)\cdot
\frac{M^2\xi}{\langle\xi\rangle_M^2}.
\]
Since $s_0-1>2$, we have
\[
\begin{aligned}
|a|_{\mathcal N^0_{s_0-1,M,0}}
\lesssim
M\|\mu^s\nabla_x\mu\|_{s_0-1,M}\lesssim
M\|\mu-1\|_{s_0,M}
\lesssim
M\|Z\|_{s_0,M}^6.
\end{aligned}
\]
Therefore, by item $(ii)$ of Lemma~\ref{azione}, applied with
$m=0$, the commutator is bounded by 
\[
C M\|Z\|_{s_0,M}^6\|W\|_{s,M}^2.
\]
Altogether, we proved that 
\begin{equation*}
	\begin{aligned}
		\frac{d}{dt}\|W\|_{Z,s,M}^2 &\lesssim M \|W\|_{s,M}^2\|Z\|_{s_0,M}^6+\|W\|_{s,M}\|Z\|^6_{s_0-2,M}\big(\|\Phi(Z)-Z\|_{s+2,M}+\|V^{\lambda}\|_{s+2,M}\big)\\
		&\quad +\|W\|_{s,M}\|Z\|^6_{s_0-1,M}\big(\|\Phi(Z)-Z\|_{s+1,M}+\|V^{\lambda}\|_{s+1,M}\big)\\
		&\quad + \|W\|_{s,M}\Big(\|X_{\mathcal{H}^{(4)}}(W+V^{\lambda})-X_{\mathcal{H}^{(4)}}(V^{\lambda})\|_{s,M}+{\|R(Z)\|_{s,M}}+\|R_5(Z)\|_{s,M}\\
		&\quad + \|Z\|^6_{s_0,M}\|W\|_{s,M}+\|\widetilde{G}(Z)\|_{s,M}\Big).
	\end{aligned}
\end{equation*}
Integrating both sides on $[0,t)$ and using the $L^2$ equivalence \eqref{equivalenze_piccole}, we conclude the proof.
\end{proof}

\section{Approximation argument and conclusion of the proof}\label{sec:approximation}
In this final section we conclude the proof of Theorem \ref{th:main} on the long time strong instability for \eqref{NLS}. The key step is a suitable approximation argument between orbits of $H\circ \Phi$ and the Toy model, which we describe below.

Let $v^{\lambda}\in\mathcal{C}([0,T],H^s)$ be a solution of $H^{(2)}+\mathcal{H}^{(4)}$ provided by Theorem \ref{thm:toy_model_scaled}, where $T$ is given by \eqref{def:T}. We recall that $\lambda=\mathbf{R}^{s_{\ast}}$, as given in \eqref{choose_lambda}, and $M\sim 3^N\mathbf{R}$ by \eqref{choose_M} and \eqref{size:mode2}. We will repeatedly use that $\mathbf{R}$ is super-exponential in $N$, as follows by \eqref{Rmax}. In particular, positive powers of $\mathbf{R}$ absorb any exponential (or slower) growth in $N$. Throughout this section, we write $A\ll B$ whenever $A=o(B)$ as $N\to\infty$.

\begin{theorem}[Approximation argument]\label{thm:approx}
	Fix $s\geq 7$, $s\geq s_0> 4$, $\sigma >0$ sufficiently small, and $N>0$ sufficiently large. If  \begin{equation}\label{cond:lambdaq}
		s_* > \frac{6 s_0+1}{4}\,
	\end{equation}
	then the following holds. Let $z_0 \in H^s$ such that
	\begin{equation}\label{initial}
		\| z_0-v^{\lambda}(0) \|_{s, M} <  \lambda^{-1-2\sigma} M^{s-1}. 
	\end{equation}
	Then there exists a unique solution $z \in \mathcal{C}([0,T];H^s)$ to $H\circ \Phi$ with $z(0)=z_0$, which satisfies
	\begin{align}\label{final_a}
		\sup_{t\in [0, T]} \| z(t)-v^{\lambda}(t) \|_{s, M} &\leq \lambda^{-1-\sigma} M^{s-1},\\
		\label{l1a}
		\sup_{t\in [0, T]} \|z(t) \|_{\ell^1}&\ll 1.
	\end{align}
	Moreover, $u(t):=\Phi(z(t))$ is well-defined for $t\in[0,T]$, and solves \eqref{NLS}.
\end{theorem}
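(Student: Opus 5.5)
The plan is a bootstrap (continuity) argument on $[0,T]$ built on the energy estimate of Theorem \ref{pr:ener_est}. By the local well-posedness theory for \eqref{NLS}, transported to $z$ through $\Phi$ (a finite-rank perturbation of the identity by Theorem \ref{thm:wbnf}), there is a maximal solution $z$ with $z(0)=z_0$. Let $T^*\le T$ be the supremal time for which \eqref{final_a} holds on $[0,T^*]$ with an extra factor $2$ on the right-hand side. The goal is to improve this to the bound without the factor $2$, which forces $T^*=T$.

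First, the low norms are controlled on $[0,T^*]$. Write $z=v^\lambda+w$. Corollary \ref{co:unstable_toy} gives $\|v^\lambda\|_{s_0,M}=o(1)$. For $w$, the interpolation \eqref{interpolo} yields $\|w\|_{s_0,M}\le M^{s_0-s}\|w\|_{s,M}\lesssim \lambda^{-1-\sigma}M^{s_0-1}$, which is $o(1)$ because $\lambda=\mathbf{R}^{s_*}$ with $s_*>s_0$ and $M\sim 3^N\mathbf{R}$. Similarly, \eqref{smoothMs} gives $\|w\|_{\ell^1}\lesssim M^{1-s}\|w\|_{s,M}\le\lambda^{-1-\sigma}$, and \eqref{eq:l1_bound_scaled} gives $\|v^\lambda\|_{\ell^1}\le N2^N\lambda^{-1}$. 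Together these prove \eqref{l1a}, so $z$ stays in $B_{\ell^1}(r)$, $\Phi(z)$ is well defined, and \eqref{Phi:equiv} gives \eqref{piccolezza-norma-bassa}. Theorem \ref{pr:ener_est} therefore applies.

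Next, each term in \eqref{energia_finale} is bounded with the sharper weights $\|z\|_{s_0,M}\lesssim N2^N\lambda^{-1}M^{s_0}$ coming from \eqref{eq:Ms_bound}:
\begin{itemize}
\item The linear terms $M\|Z\|^6_{s_0,M}\|W\|^2$ and $\|Z\|^6\|W\|^2$ contribute to Gronwall a rate $\kappa\lesssim M\lambda^{-6}M^{6s_0}$ up to factors of size $e^{O(N)}$. Over the time $T=\lambda^2T_0$ this gives $\kappa T\lesssim \mathbf{R}^{1+6s_0-4s_*}e^{O(N)}$, which is $\ll 1$ exactly when \eqref{cond:lambdaq} holds. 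This is the origin of that condition.
\item The forcing term $\|Z\|^6_{s_0-2,M}\|V^\lambda\|_{s+2,M}$ is bounded by $\lambda^{-7}M^{6s_0-12+s+2}$, using that $v^\lambda$ lives at frequencies of size $M$. The $s+1$ term is handled analogously. The $\Phi(Z)-Z$ terms are treated the same way, using \eqref{treM} to trade $M^2$ and \eqref{close11}.
\item The remainders $R$ and $R_5$ are bounded via \eqref{new_est_r} and \eqref{new_est_5}. The remainder $\widetilde G$ is bounded via \eqref{new_est_G}, with $\|z\|_{s,M}\lesssim\lambda^{-1}M^s$.
\item For the cubic difference $X_{\mathcal H^{(4)}}(W+V^\lambda)-X_{\mathcal H^{(4)}}(V^\lambda)$, the key point is that the Toy-Model vector field is exact. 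Hence only the terms linear, quadratic and cubic in $w$ remain. Using Young's inequality in the weighted norm (as in Lemma \ref{lem:vector_field_diff}), these are bounded by $\|W\|_{s,M}(\|v^\lambda\|_{\ell^1}+\|w\|_{\ell^1})^2$. This yields a Gronwall rate of at most $\lambda^{-2}e^{O(N)}$, and over $T=\lambda^2T_0$ that produces a factor $e^{O(N)}$, which must be absorbed by the margin $\lambda^{-\sigma}$.
\end{itemize}

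Finally, Gronwall's inequality gives
\[
\|w(t)\|_{s,M}^2\lesssim e^{CT_0 N^2 e^{O(N)}}\Big(\lambda^{-2-4\sigma}M^{2s-2}+T\cdot(\text{forcing})\cdot\lambda^{-1-\sigma}M^{s-1}\Big).
\]
Every forcing contribution carries enough negative powers of $\mathbf R$ that it stays below $\lambda^{-2-2\sigma}M^{2s-2}$. The exponential factor in $N$ is absorbed by $\lambda^{-2\sigma}=\mathbf R^{-2\sigma s_*}$, since $\mathbf R\ge\exp(\alpha^N)$, provided $\alpha$ is chosen large relative to the constants. This closes the bootstrap, and $u=\Phi(z)$ solves \eqref{NLS} by the conjugacy.

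The step I expect to be the main obstacle is the exponent bookkeeping, in two places. The first is checking that the $V^\lambda$ forcing with two lost derivatives, roughly $T\lambda^{-7}M^{6s_0+s-10}$, is indeed $\ll\lambda^{-1-\sigma}M^{s-1}$ under \eqref{cond:lambdaq} and $s_*\approx s$. The second is the cubic term's $e^{O(N)}$ Gronwall growth, which must be dominated by $\mathbf R^{\sigma s_*}$. For the latter I would first try using $\mathbf R\ge\exp(\alpha^N)$ with $\alpha$ large, relying on the fact that $\sigma$ is fixed independently of $N$.
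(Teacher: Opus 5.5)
Your proposal is correct and follows essentially the same route as the paper. The paper also bootstraps $\|w\|_{s,M}\lesssim\lambda^{-1-\sigma}M^{s-1}$, checks the $\ell^1$ and $\|\cdot\|_{s_0,M}$ smallness, and bounds the cubic difference by $\theta^2\lambda^{-2}\|w\|_{s,M}$ using exactness of the Toy-Model flow and the $M$-weight; it then closes with Gronwall, where $4s_*>6s_0+1$ comes from the $M\|z\|_{s_0,M}^6$ rate and the $R(z)$ forcing. One small correction: the cubic term's Gronwall factor is $\exp(\theta^2T_0)=\exp(O(N^44^N))$, not $e^{O(N)}$, as your final display correctly shows, so absorbing it into $\lambda^{\sigma}=\mathbf{R}^{\sigma s_*}\ge\exp(\sigma s_*\alpha^N)$ requires exactly $\alpha>4$, which is the paper's choice.
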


\subsection{Proof of Theorem \ref{thm:approx}}
Let us set $w = z-v^{\lambda}$, and define
\[
\begin{aligned}
	T_* := \sup\Big\{\,\tau\in[0,T] :\;&\exists\,z\in\mathcal{C}([0,\tau],H^s)
	\text{ solution to }H\circ\Phi\text{, with }z(0)=z_0,\\
	&\|w(t)\|_{s,M}<\lambda^{-1-\sigma}M^{s-1}\;
	\forall\,t\in[0,\tau]\,\Big\}.
\end{aligned}
\]
Since $\lambda>1$, \eqref{initial} gives $\|w(0)\|_{s,M}<\lambda^{-1-\sigma}M^{s-1}$. Observe that, by \eqref{initial} and \eqref{smoothMs},
\[
\| w(0) \|_{\ell^1} \lesssim  \lambda^{-1-\sigma}. 
\]
The above inequality and \eqref{eq:l1_bound_scaled} give
\begin{equation}\label{eq:zl1}
	\| z(0) \|_{\ell^1} = \| (w+v^{\lambda})(0)\|_{\ell^1} \lesssim \lambda^{-1-\sigma} + \theta \lambda^{-1} \lesssim \theta \lambda^{-1}\ll 1,
\end{equation}
where we set $\theta = N 2^{N-1}$. In particular, taking $N$ large enough,
\begin{equation}\label{radius_0}
	\| z(0) \|_{\ell^1} <{\textstyle{\frac{r_0}{2}}},
\end{equation}
where $r_0 > 0$ is the radius given in Theorem \ref{thm:wbnf}. As a consequence, $u_0 = \Phi(z(0))$ is a well-defined function in $B_{\ell^1}(r_0)\cap H^s$. In view of the local well-posedness for quasilinear Schr\"odinger equations \cite[Theorem 1.1]{iandoli}, there exists a time $T_1>0$ and a unique solution $u \in \mathcal{C}([0,T_1); H^s)$ to \eqref{NLS}. The continuous embedding $H^s\hookrightarrow\ell^1$ then yields $\|u(t)\|_{\ell^1}<r_0$ for any $t\in[0,T_2]$, for some $T_2\in(0,T_1]$. Applying again Theorem \ref{thm:wbnf}, we deduce that $z(t):=\Phi^{-1}(u(t))$ is well-defined for all $t\in[0,T_2]$. Since $\Phi$ is symplectic, $z$ is the unique solution to $[0,T_2]$ of $H\circ\Phi$ with initial datum $z_0$. Finally, in view of \eqref{initial}, we obtain that
\begin{equation*}
    \| w(t) \|_{s, M} < \lambda^{-1-\sigma} M^{s-1} \quad\forall\, t\in[0, T_3],
\end{equation*}
for some $0<T_3\le\min\{T_2,T\}$. Hence $T_*>0$. Iterating the above argument, and using the blow-up alternative for \eqref{NLS}, we deduce that there exists a unique solution $z \in \mathcal{C}([0,T_*];H^s)$ to $H\circ \Phi$, with $z(0)=z_0$, satisfying
\begin{align}	
	\label{final}
	\sup_{t\in [0, T_*]} \|w(t) \|_{s, M} &\leq \lambda^{-1-\sigma} M^{s-1},\\\label{small_l1}
	\sup_{t\in [0, T_*]} \|z(t) \|_{\ell^1}&\ll 1,
\end{align}
and such that $u(t):=\Phi(z(t))$ is well-defined for $t\in[0,T_*]$ and solves \eqref{NLS}. Observe moreover that, for $t\in[0,T_*]$, we have the bound
\begin{equation}\label{zMs-s}
	\| z(t) \|_{s, M} = \| (w+v^{\lambda})(t)\|_{s, M} \lesssim \lambda^{-1-\sigma} M^{s-1} + \sqrt{\theta} \lambda^{-1} M^{s} \lesssim \theta \lambda^{-1} M^{s},
\end{equation}
which follows by \eqref{final} and \eqref{eq:Ms_bound}.

It remains to show that $T_*=T$. Assume, by contradiction, $T_*<T$. We claim that, by taking $N$ possibly larger,
\begin{equation}\label{improved}
	\| w(t)\|_{s, M}^2 \le {\textstyle{\frac{1}{2}}} \lambda^{-2(1+\sigma)} M^{2(s-1)}, \qquad t\in [0, T_*].
\end{equation}
This contradicts the definition of $T_*$, thus yielding $T_*=T$.

In order to prove \eqref{improved}, we exploit the energy estimates provided by Theorem \ref{pr:ener_est} over the interval $[0,T_*]$. Preliminary, let us check that the smallness assumption \eqref{piccolezza-norma-bassa} is satisfied. Indeed, for all $t\in [0, T_*]$ we have
\begin{align*}
	\| \Phi(z(t)) \|_{s_0, M} &\le \| z(t)\|_{s_0, M} + \| \Phi(z(t))-z(t)\|_{s_0, M} \le \| z(t)\|_{s_0, M}(1 + \|z(t)\|_{\ell^1})\\
	&\lesssim \| z(t)\|_{s_0, M} \lesssim \theta \lambda^{-1}M^{s_0}\sim \theta 3^{Ns_0}\mathbf{R}^{s_0-s_*}\ll 1,
\end{align*}
where we used \eqref{close11}, \eqref{final} and the fact that $s_0< s_*$, which follows by \eqref{cond:lambdaq}.

The energy estimate \eqref{energia_finale} then yields, for all $t\in [0, T_*]$,
\begin{equation}\label{energy_bound_clean}
	\begin{aligned}
		\| w(t) \|_{s, M}^2 &\lesssim \| w(0)\|_{s, M}^2 + \int_0^t \Big( M\| w(\tau) \|_{s, M}^2 \| z(\tau) \|_{s_0, M}^6 \\
		&\quad + \| w(\tau) \|_{s, M} \Big[ \| z(\tau) \|_{s_0-2, M}^6 \big( \| \Phi(z(\tau))-z(\tau)\|_{s+2, M} + \| v^{\lambda}(\tau) \|_{s+2, M}\big)\\
		&\quad +\| z(\tau) \|_{s_0-1, M}^6 \big( \| \Phi(z(\tau))-z(\tau)\|_{s+1, M} + \| v^{\lambda}(\tau) \|_{s+1, M}\big)\\
		&\quad + \| X_{\mathcal{H}^{(4)}}(z(\tau)) - X_{\mathcal{H}^{(4)}}(v^{\lambda}(\tau))\|_{s, M} {+ \|R(z)\|_{s,M}} + \| R_5(z(\tau))\|_{s, M} \\
		&\quad + \| z(\tau) \|_{s_0, M}^6 \| w(\tau) \|_{s, M} + \|\widetilde{G}(z(\tau))\|_{s, M} \Big]\Big)\,d\tau\,.
	\end{aligned}
\end{equation}
We are going to estimate the various terms in the right hand side of \eqref{energy_bound_clean}, omitting for ease of notation the explicit dependence on $\tau\in[0,T_*]$. The most delicate is the one involving the vector field $X_{\mathcal{H}^{(4)}}$. To deal with this term, we need the following technical result, which exploits in a precise way the structure of the normalized Hamiltonian $\mathcal{H}^{(4)}$ given by \eqref{calH4}.
\begin{lemma}\label{lem:vector_field_diff}
	We have that 
	\begin{equation}\label{eg:diff_X}
		\| X_{\mathcal{H}^{(4)}}(w+v^{\lambda})- X_{\mathcal{H}^{(4)}}(v^{\lambda})\|_{s,M} \lesssim \lambda^{-2} \theta^2 \| w\|_{s,M} \,.
	\end{equation}
\end{lemma}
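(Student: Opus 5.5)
The plan is to expand the cubic vector field in powers of $w$ and, in every resulting term, place the whole weight $\langle n\rangle_M^s$ on a factor of $w$, so that $v^\lambda$ only enters through its $\ell^1$ norm \eqref{eq:l1_bound_scaled}. Since $\mathcal H^{(4)}=H^{(4,0)}+H^{(4,\ge2)}$ is a sum of quartic monomials with coefficients bounded by $(8\pi^2)^{-1}$, the vector field $X_{\mathcal H^{(4)}}$ is a trilinear form $\mathcal T(u,u,u)$. Its $n$-th Fourier coefficient is a sum over $n_1-n_2+n_3=n$ of $u_{n_1}\overline{u_{n_2}}u_{n_3}$, restricted to the admissible index sets. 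I write
\[
X_{\mathcal H^{(4)}}(w+v^\lambda)-X_{\mathcal H^{(4)}}(v^\lambda)=\mathcal L_1+\mathcal L_2+\mathcal L_3,
\]
where $\mathcal L_k$ collects the terms containing exactly $k$ factors of $w$ (or $\overline w$) and $3-k$ factors of $v^\lambda$. Passing to absolute values $p^w_n=|w_n|$ and $p^v_n=|v^\lambda_n|$, each $\mathcal L_k$ is dominated componentwise by a triple convolution of $p^w$'s and $p^v$'s, as in \eqref{XF1}.

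The key step is the weight transfer. For $n=n_1-n_2+n_3$ we have $\langle n\rangle_M\lesssim\max_i\langle n_i\rangle_M$. Every mode carried by $v^\lambda$ lies in $\Lambda$, so by \eqref{size:mode2} and the choice \eqref{choose_M} it satisfies $\langle n_i\rangle_M\le\sqrt2\,M\le\sqrt2\,\langle m\rangle_M$ for \emph{any} $m\in\Z^2$, in particular for the frequency $m$ of a $w$ factor. This is exactly where the modified bracket is used: low-frequency normal modes of $w$ still carry weight $\gtrsim M^s$. Hence in every term with at least one $w$ factor, $\langle n\rangle_M^s\lesssim\langle n_{i_0}\rangle_M^s$ for some index $i_0$ carrying $w$. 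Splitting according to which factor is largest, as in the proof of \eqref{XFs}, and applying Young's inequality gives
\[
\|\mathcal L_k\|_{s,M}\lesssim \|w\|_{s,M}\,\|w\|_{\ell^1}^{k-1}\,\|v^\lambda\|_{\ell^1}^{3-k},\qquad k=1,2,3.
\]
The closure structure behind \eqref{calH4} (the absence of $H^{(4,1)}$) is not even needed here, since the argument works for any momentum-conserving quartic Hamiltonian with bounded coefficients. I would nevertheless remark that for $H^{(4,\ge2)}$ the linear term only involves a $w$ mode and an output mode that both lie outside $\Lambda$.

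Finally, I insert the bounds. By \eqref{eq:l1_bound_scaled}, $\|v^\lambda\|_{\ell^1}\le\theta\lambda^{-1}$. On $[0,T_*]$, the bootstrap bound \eqref{final} together with \eqref{smoothMs} gives $\|w\|_{\ell^1}\lesssim M^{1-s}\|w\|_{s,M}\le\lambda^{-1-\sigma}\le\theta\lambda^{-1}$. Therefore $\mathcal L_1$ contributes $\theta^2\lambda^{-2}\|w\|_{s,M}$, $\mathcal L_2$ contributes $\theta\lambda^{-2-\sigma}\|w\|_{s,M}$, and $\mathcal L_3$ contributes $\lambda^{-2-2\sigma}\|w\|_{s,M}$, all of which are bounded by the right-hand side of \eqref{eg:diff_X}. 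I expect the main obstacle to be the weight-transfer step. One must check carefully that no configuration forces the $s$ derivatives onto $v^\lambda$, whose $\|\cdot\|_{s,M}$ norm is of size $\sqrt\theta\lambda^{-1}M^s$ and would destroy the estimate. The uniform comparability $\langle n_i\rangle_M\lesssim\langle m\rangle_M$ for all $n_i\in\Lambda$ and all $m\in\Z^2$ is what rules this out, so I would write that inequality out explicitly, case by case on the position of the maximal frequency.
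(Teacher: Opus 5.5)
Your proposal is correct and uses the same mechanism as the paper. The modified bracket gives $\langle n_i\rangle_M\le\sqrt2\,M\le\sqrt2\,\langle m\rangle_M$ for every $n_i\in\Lambda$ and every $m\in\Z^2$, so the full $s$-weight can always be placed on a $w$ factor, $v^\lambda$ is measured only in $\ell^1$, and the bootstrap bound \eqref{final} together with \eqref{smoothMs} controls $\|w\|_{\ell^1}$. The paper organizes the same estimate differently, through the splitting $H^{(4,0)}+H^{(4,\geq2)}$, the identity $X_{H^{(4,\geq2)}}(v^\lambda)=0$ and the cases $|n_1|\le M$, $|n_1|>M$; your expansion by the number of $w$ factors treats all configurations at once and confirms that no structural property of $\mathcal{H}^{(4)}$ is needed for this lemma.
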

\begin{proof}
	We start with the bound
	\begin{align*}
		\| X_{\mathcal{H}^{(4)}}(w+v^{\lambda})- X_{\mathcal{H}^{(4)}}(v^{\lambda})\|_{s,M} &\le  \| X_{H^{(4,0)}}(w+v^{\lambda})- X_{H^{(4,0)}}(v^{\lambda})\|_{s,M}\\
		&\quad + \| X_{H^{(4, \geq 2)}}(w+v^{\lambda})- X_{H^{(4, \geq 2)}}(v^{\lambda})\|_{s,M}\\
		&= \underbrace{\| X_{H^{(4,0)}}(w+v^{\lambda})- X_{H^{(4,0)}}(v^{\lambda})\|_{s,M}}_{(I)} \\
		&\quad + \underbrace{\| X_{H^{(4, \geq 2)}}(w+v^{\lambda})\|_{s,M}}_{(II)}\,,
	\end{align*}
	where we used the fact that $X_{{H}^{(4, \geq 2)}}(v^{\lambda})=0$. 
	We note that, since $\Pi_{\Lambda} X_{H^{(4,0)}} = X_{H^{(4,0)}}$ and so $\Pi_{\le M} X_{H^{(4,0)}} = X_{H^{(4,0)}}$,
	\begin{equation}\label{bound:I}
		(I)^2\lesssim M^{2s} \| X_{H^{(4,0)}}(w+v^{\lambda})- X_{H^{(4,0)}}(v^{\lambda})\|_{L^2}^2,
	\end{equation}
	hence we just need to estimate the $L^2$ norm of this difference. By Minkowski and Young's inequality for convolutions, we have
	\begin{align*}
		\| X_{H^{(4,0)}}(w+&v^{\lambda})- X_{H^{(4,0)}}(v^{\lambda})\|_{L^2}^2 \\
		&= \sum_{n\in \Lambda} \Big| \sum_{\substack{n_1-n_2+n_3=n\\ n_1, n_2, n_3\in \Lambda}}  (w+v^{\lambda})_{n_1} \overline{(w+v^{\lambda})_{n_2}} (w+v^{\lambda})_{n_3} - v^{\lambda}_{n_1} \overline{v^{\lambda}_{n_2}} v^{\lambda}_{n_3} \Big|^2\\
		&= \sum_{n\in \Lambda} \Big| \sum_{\substack{n_1-n_2+n_3=n\\ n_1, n_2, n_3\in \Lambda}}  w_{n_1} \overline{(w+v^{\lambda})_{n_2}} (w+v^{\lambda})_{n_3} + v^{\lambda}_{n_1} \overline{w_{n_2}} (w+v^{\lambda})_{n_3} + v^{\lambda}_{n_1} \overline{v^{\lambda}_{n_2}} w_{n_3} \Big|^2\\
		&\le \| w*\overline{(w+v^{\lambda})}*(w+v^{\lambda}) + v^{\lambda}*\overline{w}*(w+v^{\lambda}) + v^{\lambda}*\overline{v^{\lambda}}*w \|_{\ell^2}^2\\
		&\le \big( \| w*\overline{(w+v^{\lambda})}*(w+v^{\lambda})\|_{\ell^2} + \|v^{\lambda}*\overline{w}*(w+v^{\lambda})\|_{\ell^2} + \|v^{\lambda}*\overline{v^{\lambda}}*w \|_{\ell^2} \big)^2\\
		&\le \| w\|_{\ell^2}^2 \big( \|w+v^{\lambda} \|_{\ell^1}^2 + \| v^{\lambda}\|_{\ell^1}\|w+v^{\lambda} \|_{\ell^1} + \|v^{\lambda}\|_{\ell^1}^2 \big)^2 \\
		&\lesssim \| w\|_{\ell^2}^2 (\| w\|_{\ell^1}+\| v^{\lambda}\|_{\ell^1})^4 \lesssim \| w\|_{\ell^2}^2 \| v^{\lambda}\|_{\ell^1}^4.
	\end{align*}
	Combining \eqref{bound:I}, \eqref{eq:l1_bound_scaled} and \eqref{final} we get
	\[
	(I) \lesssim M^{s} \| w\|_{L^2} \| v^{\lambda}\|_{\ell^1}^2 \lesssim \lambda^{-2} \theta^2 \| w\|_{s,M}\,.
	\]
	Now we provide a bound for $(II)$. We first observe that
	\begin{align}
		X_{H^{(4, \geq 2)}}(w+v^{\lambda}) &= \sum_{n\in \Lambda} e^{\mathrm{i} n x} \sum_{\substack{n_1-n_2+n_3=n\\ n_j, n_k\notin \Lambda, j\neq k, j, k\in \{ 1, 2, 3 \}}} (w+v^{\lambda})_{n_1} \overline{(w+v^{\lambda})_{n_2}} (w+v^{\lambda})_{n_3} \label{first}\\ \label{second}
		&\quad + \sum_{n\notin \Lambda} e^{\mathrm{i} n x} \sum_{\substack{n_1-n_2+n_3=n\\ n_j\notin \Lambda,  j\in \{ 1, 2, 3 \}}} (w+v^{\lambda})_{n_1} \overline{(w+v^{\lambda})_{n_2}} (w+v^{\lambda})_{n_3}\,.
	\end{align}
	Concerning the first term \eqref{first}, we note that if $n_j, n_k\notin \Lambda$ then
	\[
	(w+v^{\lambda})_{n_j}=w_{n_j}, \qquad (w+v^{\lambda})_{n_k}=w_{n_k}.  
	\]
	Then, since the sum runs over $n\in \Lambda$, we can reason exactly as for the bound on $(I)$ and obtain an estimate of order $M^s\,\| w\|_{L^2} \| w \|_{\ell^1} \| v^{\lambda}\|_{\ell^1}$. Let us now consider \eqref{second}. It is enough to bound the term
	\begin{equation}\label{term}
		\sum_{n\notin \Lambda} e^{\mathrm{i} n x} \sum_{\substack{n_1-n_2+n_3=n\\ n_1\notin \Lambda}} w_{n_1} \overline{(w+v^{\lambda})_{n_2}} (w+v^{\lambda})_{n_3},
	\end{equation}
	since the other terms can be estimated symmetrically. Let us consider the following splitting
	\begin{align*}
		\eqref{term} &= \sum_{n\notin \Lambda} e^{\mathrm{i} n x} \sum_{\substack{n_1-n_2+n_3=n\\ n_1\notin \Lambda\\ n_2\notin \Lambda, n_3\in \Lambda}} w_{n_1} \overline{w_{n_2}} (w+v^{\lambda})_{n_3} \\
		&\quad + \sum_{n\notin \Lambda} e^{\mathrm{i} n x} \sum_{\substack{n_1-n_2+n_3=n\\ n_1\notin \Lambda\\ n_2\in \Lambda, n_3\notin \Lambda}} w_{n_1} \overline{(w+v^{\lambda})_{n_2}} w_{n_3} \\
		&\quad + \sum_{n\notin \Lambda} e^{\mathrm{i} n x} \sum_{\substack{n_1-n_2+n_3=n\\ n_1\notin \Lambda\\ n_2\notin \Lambda, n_3\notin \Lambda}} w_{n_1} \overline{w_{n_2}} w_{n_3} \\
		&\quad + \sum_{n\notin \Lambda} e^{\mathrm{i} n x} \sum_{\substack{n_1-n_2+n_3=n\\ n_1\notin \Lambda\\ n_2\in \Lambda, n_3\in \Lambda}} w_{n_1} \overline{(w+v^{\lambda})_{n_2}} (w+v^{\lambda})_{n_3}\,.
	\end{align*}
	The worst term is the last one, because $w+v^{\lambda}$ has weaker smallness properties with respect to $w$ alone. Hence let us study its $\| \cdot \|_{s,M}$ norm. We recall that if $n_2, n_3\in \Lambda$ then $|n_2|, |n_3|\le M$. We distinguish two cases: $|n_1|\le M$ or $|n_1|>M$. We have
	\begin{align*}
		\Big\| \sum_{n\notin \Lambda} e^{\mathrm{i} n x} \sum_{\substack{n_1-n_2+n_3=n\\ n_1\notin \Lambda\\ n_2\in \Lambda, n_3\in \Lambda}} w_{n_1}& \overline{(w+v^{\lambda})_{n_2}} (w+v^{\lambda})_{n_3} \Big\|_{s,M}^2  \\
		&\lesssim \underbrace{\sum_{n\notin \Lambda} \Big|\sum_{\substack{n_1-n_2+n_3=n\\ n_1\notin \Lambda, |n_1|\le M\\ n_2\in \Lambda, n_3\in \Lambda}} w_{n_1} \overline{(w+v^{\lambda})_{n_2}} (w+v^{\lambda})_{n_3}\Big|^2 \langle n \rangle_M^{2s}}_{(A)}\\
		&\quad + \underbrace{\sum_{n\notin \Lambda} \Big|\sum_{\substack{n_1-n_2+n_3=n\\ n_1\notin \Lambda, |n_1|>M\\ n_2\in \Lambda, n_3\in \Lambda}} w_{n_1} \overline{(w+v^{\lambda})_{n_2}} (w+v^{\lambda})_{n_3}\Big|^2 \langle n \rangle_M^{2s}}_{(B)}
	\end{align*}
	Let us focus on the term $(A)$. By momentum conservation $|n|\le 3 M$, hence (reasoning as for $(I)$)
	\[
	(A)\lesssim (3M)^{2s} \sum_{n\notin \Lambda} \Big|\sum_{\substack{n_1-n_2+n_3=n\\ n_1\notin \Lambda, |n_1|\le M\\ n_2\in \Lambda, n_3\in \Lambda}} w_{n_1} \overline{(w+v^{\lambda})_{n_2}} (w+v^{\lambda})_{n_3}\Big|^2 \lesssim M^{2s} \| w \|_{L^2}^2 \| v^{\lambda} \|_{\ell^1}^4.
	\]
	Hence, this gives an estimate equivalent to $(I)$.
	Concerning $(B)$, by momentum conservation we have that $\langle n \rangle_M \lesssim |n_1|$, thus, applying Young's inequality again,
	\begin{align*}
		(B)&\lesssim \sum_{n\notin \Lambda} \Big|\sum_{\substack{n_1-n_2+n_3=n\\ n_1\notin \Lambda, |n_1|>M\\ n_2\in \Lambda, n_3\in \Lambda}} |n_1|^{s}\,w_{n_1} \overline{(w+v^{\lambda})_{n_2}} (w+v^{\lambda})_{n_3}\Big|^2 \\
		&\lesssim \| w\|_{s,M}^2 \| v^{\lambda}\|_{\ell^1}^4 \lesssim \theta^4 \lambda^{-4} \| w\|_{s,M}^2.
	\end{align*}
	This concludes the proof.
\end{proof}
Next, using the bounds \eqref{new_est_r}-\eqref{new_est_G}, \eqref{eq:Ms_bound} and \eqref{zMs-s}, we get
\begin{align*}
	\|R(z)\|_{s,M}&
	\lesssim
	\|z\|_{s_0,M}^{6}\|z\|_{s,M}\lesssim
	\big(\theta\lambda^{-1}M^{s_0}\big)^6
	\big(\theta\lambda^{-1}M^s\big)\lesssim
	\theta^7\lambda^{-7}M^{s+6s_0},
	\\ 
	\|R_5(z)\|_{s,M} &\lesssim \|z\|_{\ell^1}^4 \|z\|_{s,M} \lesssim \theta^5 \lambda^{-5} M^s\,,
	\\
	\|\widetilde{G}(z)\|_{s,M} &\lesssim M^2 \|z\|^8_{s_0,M}\|z\|_{s,M} \lesssim \theta^9 \lambda^{-9} M^{s+8s_0+2}.
\end{align*}
Owing to \eqref{close1}-\eqref{close11} and \eqref{zMs-s}, we also obtain
\begin{align*}
	\| \Phi(z)-z\|_{s+2, M}& \lesssim M^2\| \Phi(z)-z\|_{s, M}\lesssim M^2\|z\|^2_{\ell^1}\|z\|_{s,M}\\
	& \lesssim M^{s+2}\theta^3\lambda^{-3}.
\end{align*}
Furthermore,
\begin{align*}
	\| w \|_{s, M} \| z \|_{s_0-2, M}^6 \| \Phi(z)-z \|_{s+2, M} &\lesssim \Big( \lambda^{-1-\sigma} M^{s-1} \Big) \Big( \theta \lambda^{-1} M^{s_0-2} \Big)^6 \Big( M^{s+2}\theta^3\lambda^{-3} \Big) \\
	&\lesssim \theta^9 \lambda^{-10-\sigma} M^{2s-11+6s_0}\,,
\end{align*}
\begin{align*}
	\| w \|_{s, M} \| z \|_{s_0-2, M}^6 \| v^\lambda \|_{s+2, M} &\lesssim \Big( \lambda^{-1-\sigma} M^{s-1} \Big) \Big( \theta \lambda^{-1} M^{s_0-2} \Big)^6 \Big( \theta \lambda^{-1} M^{s+2} \Big) \\
	&\lesssim \Big( \lambda^{-1-\sigma} M^{s-1} \Big) \Big( \theta^6 \lambda^{-6} M^{6s_0-12} \Big) \Big( \theta \lambda^{-1} M^{s+2} \Big) \\
	&\lesssim \theta^7 \lambda^{-8-\sigma} M^{2s-11+6s_0}\,.
\end{align*}
Concerning the third line of \eqref{energy_bound_clean}, we have
\[
\|\Phi(z)-z\|_{s+1,M}\lesssim M^{s+1}\theta^3\lambda^{-3},
\quad
\|z\|_{s_0-1,M}\lesssim\theta\lambda^{-1}M^{s_0-1},
\quad
\|v^\lambda\|_{s+1,M}\lesssim\theta\lambda^{-1}M^{s+1}.
\]
Consequently,
\begin{align*}
	\| w \|_{s, M} \| z \|_{s_0-1, M}^6
	\big(\|\Phi(z)-z\|_{s+1,M}+\|v^\lambda\|_{s+1,M}\big)
	\lesssim{}&
	\theta^9\lambda^{-10-\sigma}M^{2s+6s_0-6}\\
	&+\theta^7\lambda^{-8-\sigma}M^{2s+6s_0-6}.
\end{align*}
Finally, we have
\begin{align*}
	\| w \|_{s, M} \|R (z)\|_{s, M} &\lesssim (\lambda^{-1-\sigma} M^{s-1}) ( \theta^7\lambda^{-7}M^{s+6s_0} ) \lesssim \theta^7 \lambda^{-8-\sigma} M^{2s-1+6s_0}\,,\\
	\| w \|_{s, M} \|R_5(z)\|_{s, M} &\lesssim (\lambda^{-1-\sigma} M^{s-1}) ( \theta^5 \lambda^{-5} M^s ) \lesssim \theta^5 \lambda^{-6-\sigma} M^{2s-1}\,,\\
	\| w \|_{s, M} \|\widetilde{G}(z)\|_{s, M} & \lesssim (\lambda^{-1-\sigma} M^{s-1}) ( \theta^9 \lambda^{-9} M^{s+8s_0+2} ) \lesssim \theta^9 \lambda^{-10-\sigma} M^{2s+1+8s_0}\,.
\end{align*}
Combining \eqref{eg:diff_X} with the above estimates we deduce that, for every $t\in [0, T_*]$,
\begin{align*}
	\| w(t) \|_{s, M}^2 &\lesssim \lambda^{-2-4\sigma} M^{2(s-1 )} + \int_0^t \| w\|_{s, M}^2 \big(\theta^6 \lambda^{-6} M^{6 s_0 + 1} + \theta^2 \lambda^{-2}\big)\,d\tau\\
	&\quad + t \Big( \theta^9 \lambda^{-10-\sigma} M^{2s+1 +8s_0} + \theta^9 \lambda^{-10-\sigma} M^{2s-11 +6s_0} \\
	&\quad + \theta^9 \lambda^{-10-\sigma} M^{2s+6s_0-6 }
	+ \theta^7 \lambda^{-8-\sigma} M^{2s+6s_0-6 }\\
	&\quad + \theta^7 \lambda^{-8-\sigma} M^{2s-11 +6s_0} + \theta^5 \lambda^{-6-\sigma} M^{2s-1 } + {\theta^7 \lambda^{-8-\sigma} M^{2s-1 +6s_0}} \Big) \,.
\end{align*}
Observe moreover that \eqref{cond:lambdaq} implies
$$
\theta^6 \lambda^{-6} M^{6 s_0 + 1} + \theta^2 \lambda^{-2} \le 2 \theta^2 \lambda^{-2},
$$ 
provided $N$ is large enough. Then by Gr\"onwall's lemma, for all $t\in [0, T_*]$,
\begin{align}
    \| w(t) \|_{s, M}^2 \lesssim \exp(2 \theta^2 \lambda^{-2} t) \Big[ \lambda^{-2-4\sigma} M^{2(s-1 )} &+ t\Big( {\theta^7 \lambda^{-8-\sigma} M^{2s-1 +6s_0}} \notag\\
		&\quad+ \theta^7\lambda^{-8-\sigma}M^{2s+6s_0-6 }\label{Gronwall}\\
		&\quad+ \theta^9 \lambda^{-10-\sigma} M^{2s+1 +8s_0} + \theta^5 \lambda^{-6-\sigma} M^{2s-1 } \Big)\Big]\,.\notag
\end{align}
Now consider $c>0$ such that $c \lambda^{2} \log(\lambda) > T$. Recalling \eqref{Rmax}, \eqref{T0} and \eqref{def:T}, we can choose
\begin{equation}\label{lowb:c}
	c = \frac{2\mathbb{K}\gamma_0N^2}{s_*\, \alpha^N},
\end{equation}
with $\alpha\geqslant \alpha_0$ as in property (P6) of the $\Lambda$ set given by Theorem \ref{thm:set}. We first show a bound on the exponential factor in \eqref{Gronwall}. Since we are assuming $T_*<T$, for all $t\in [0, T_*]$ we have
\[
\exp(2 \theta^2 \lambda^{-2} t)\le \exp(2 \theta^2 \lambda^{-2} T) \le \exp\Big(2 \theta^2 \lambda^{-2} \big(c \lambda^2 \log \lambda\big)\Big) = \exp\big(2 c \theta^2 \log \lambda\big) = \lambda^{\gamma},
\]
where we set
\[
\gamma:=2 c \theta^2.
\]
Let us choose $\alpha>4$. Recalling \eqref{lowb:c} and that $\theta=N2^{N-1}$, we have
$$\gamma\sim \frac{N^44^N}{s_*\alpha^N}\ll 1.$$
In view of \eqref{Gronwall} and the bound $T_*<T<\lambda^2\log(\lambda^c)$, estimate \eqref{improved} is satisfied by imposing the following conditions
\begin{align}
	& \lambda^{-2-4\sigma+\gamma} M^{2(s-1 )} \ll \lambda^{-2(1+\sigma)} M^{2(s-1 )} \nonumber \\ 
	&{\Longleftrightarrow \quad \lambda^{\gamma-2\sigma} \ll 1}, \label{11}\\ 
	\nonumber \\
	&{\lambda^{-6-\sigma+\gamma}\log(\lambda^c)\theta^7 M^{2s+6s_0-1}\ll\lambda^{-2(1+\sigma)}M^{2(s-1)}}\nonumber\\
	&{\Longleftrightarrow\quad \lambda^{4-\sigma-\gamma}\gg\theta^7\log(\lambda^c)M^{6s_0+1}},\label{13}\\
	\nonumber \\
	& {\lambda^{-6-\sigma+\gamma}\log(\lambda^c)\theta^7M^{2s+6s_0-6 }
		\ll\lambda^{-2(1+\sigma)}M^{2(s-1 )}}\nonumber\\
	&{\Longleftrightarrow\quad
		\lambda^{4-\sigma-\gamma}
		\gg\theta^7\log(\lambda^c)M^{6s_0-4}},
	\label{13a}\\
	\nonumber\\
	& \lambda^{-8-\sigma+\gamma} \log(\lambda^c) \theta^{9} M^{2s+1 +8 s_0} \ll \lambda^{-2(1+\sigma)} M^{2(s-1 )} \nonumber \\ 
	&\Longleftrightarrow \quad \lambda^{6-\sigma-\gamma} \gg \theta^{9} \log(\lambda^c) M^{3+8 s_0}, \label{12}\\ 
	\nonumber \\
	& \lambda^{-4-\sigma+\gamma} \log(\lambda^c) \theta^5 M^{2s-1 } \ll \lambda^{-2(1+\sigma)} M^{2(s-1 )} \nonumber \\ 
	&\Longleftrightarrow \quad \lambda^{2-\sigma-\gamma} \gg \log(\lambda^c) \theta^5 M^{1} \label{16}.
\end{align}
Notice that the exponent $s$ cancels out of all the conditions. Condition \eqref{11} is satisfied as soon as $\gamma<\frac{\sigma}{2}$. Since $c\ll 1$, estimates \eqref{13}-\eqref{16} reduce to the following lower bounds on $s_*$:
\begin{itemize}
\item Condition \eqref{13} is satisfied if $s_* > \frac{1+6s_0}{4-\sigma-\gamma}$.
	\item Condition \eqref{13a} is satisfied if $s_* > \frac{6s_0-4}{4-\sigma-\gamma}$.
	\item Condition \eqref{12} is satisfied if $s_* > \frac{3+8s_0}{6-\sigma-\gamma}$.
	\item Condition \eqref{16} is satisfied if $s_* > \frac{1}{2-\sigma-\gamma}$.
\end{itemize}
Since $s_0>4$, for $\sigma,\gamma>0$ sufficiently small condition \eqref{13} is the most restrictive one. Thus we require 
\[
s_* > \frac{1+6s_0}{4-\sigma-\gamma}\,,
\]
and $N$ large enough. Recalling \eqref{cond:lambdaq}, the above condition is satisfied. Therefore, we proved estimate \eqref{improved} and this concludes the proof.

\subsection{Proof of Theorem \ref{th:main}}
Let us consider a solution $v^{\lambda}$ to $H^{(2)}+\mathcal{H}^{(4)}$ with
\begin{equation}\label{sm_bi}
	\|v^\lambda(0)\|_s<\frac{\mu}{2},\qquad \|v^\lambda(T)\|_s> 2\mathcal{K},
\end{equation}
as provided by Corollary \ref{co:unstable_toy}. Let also $s_*=s+\delta$, with $0<\delta\ll 1$ as in \eqref{forma_star}. Observe that, since $s_*>s\geq 7$, condition \eqref{cond:lambdaq} is satisfied for $s_0=4+\nu$, with $\nu>0$ small enough.

We take $u_0\in H^s$ such that
\begin{equation}\label{eq:open}
	\|u_0-v^\lambda(0)\|_s
	<\min\left\{\frac{\mu}{2},\lambda^{-3/2}\right\}.
\end{equation}
In particular,
\[
\|u_0\|_s
\leq \|v^\lambda(0)\|_s
+\|u_0-v^\lambda(0)\|_s
<\frac{\mu}{2}+\frac{\mu}{2}<\mu.
\]
Next, using \eqref{eq:l1_bound_scaled} we get
\begin{equation}\label{u0l1}
	\|u_0\|_{\ell^1}\leq \|u_0-v^\lambda(0)\|_s+\|v^\lambda(0)\|_{\ell^1}\leq \lambda^{-3/2}+N2^{N-1}\lambda^{-1},
\end{equation}
so that $\|u_0\|_{\ell^1}\ll 1$. Hence we can apply Theorem \ref{thm:wbnf}, which guarantees that $z_0:=\Phi^{-1}(u_0)\in H^s$ is well-defined, and $\|z_0\|_{\ell^1}\ll 1$. Moreover, using estimates  \eqref{close11}, \eqref{Phi:equiv} and \eqref{u0l1} we obtain that for $N$ large enough
\begin{align*}
	\|z_0-v^\lambda(0)\|_{s,M}
	&\leq \|z_0-u_0\|_{s,M}
	+\|u_0-v^\lambda(0)\|_{s,M}\\
	&\leq M^s\|z_0\|_{\ell^1}^2\|z_0\|_s
	+\lambda^{-3/2}M^s\\
	&\lesssim M^s\|u_0\|_{\ell^1}^2\|u_0\|_s
	+\lambda^{-3/2}M^s\\
	&\lesssim
	\big(\lambda^{-3}+N^2 4^{N-1}\lambda^{-2}\mu+\lambda^{-3/2}\big)M^s
	\lesssim\lambda^{-3/2}M^s\lesssim \lambda^{-1-2\sigma}M^{s-1},
\end{align*}
for $\sigma>0$ sufficiently small. In view of the above inequality, we can apply Theorem \ref{thm:approx}. Then there exist $z$ and $u=\Phi(z)$ solutions over $[0,T]$ to $H\circ \Phi$ and \eqref{NLS}, respectively, with 
\begin{gather}\label{small_boot}
	\sup_{t\in[0,T]}
	\|z(t)-v^\lambda(t)\|_{s,M}
	\leq\lambda^{-1-\sigma}M^{s-1}\lesssim 3^{(s-1)N}\mathbf{R}^{-s_*+s-1}\ll 1,\\\label{small_r1}
	\sup_{t\in[0,T]}\|z(t)\|_{\ell^1}\ll 1.
\end{gather}
Owing to \eqref{sm_bi} and \eqref{small_boot} we obtain 
\begin{equation}\label{zTbig}
	\|z(T)\|_s \geq \|v^\lambda(T)\|_s
	-\|z(T)-v^\lambda(T)\|_s > 2\mathcal{K}.
\end{equation}
Moreover, applying \eqref{close11} with $M=1$\footnote{Even though we fixed $M$ as in \eqref{choose_M} in Theorem \ref{thm:wbnf}, large part of the statement, and in particular estimate \eqref{close11} actually holds for arbitrary $M>1$.}, we get
\begin{equation}\label{closesmall}
	\|u(T)-z(T)\|_s
	\lesssim\|z(T)\|^2_{\ell^1}\|z(T)\|_s.
\end{equation}
In view of \eqref{closesmall}, \eqref{small_r1} and \eqref{zTbig}, taking $N$ sufficiently large we eventually deduce
\[
\|u(T)\|_s\geq {\textstyle{\frac{1}{2}}}\|z(T)\|_s>\mathcal{K}.
\]
It remains to prove the estimate \eqref{time_bound} on $T$. To this aim, we fix 
\begin{equation}\label{rightN}
N=\left\lceil{\textstyle{\frac{4}{(s-1)\log 2}}}\log(\mathcal{K}\mu^{-1})\right\rceil
\end{equation}
The choice above guarantees, for $\mathcal{K}\mu^{-1}$ large enough, the validity of the lower bound \eqref{choice:N} used in the proof of Corollary \ref{co:unstable_toy}, as well as of  all the others lower bounds on $N$ imposed throughout this section, since they are independent of $\mathcal{K}$ and $\mu$.

Using \eqref{rightN} and \eqref{Rmax} we get
\[
\log\mathbf{R}
\leq2(1+\eta)\alpha^N
\lesssim (\mathcal{K}\mu^{-1})^{\frac{\gamma_{\mathrm R}}{s-1}},
\]
where we set $\gamma_R=\frac{4\log\alpha}{\log 2}$. Owing to \eqref{T0}, \eqref{def:T}, and \eqref{choose_lambda} we
deduce, for $\mathcal{K}\mu^{-1}$ large enough,
\begin{align*}
	\log T
	&\leq \log\big(\mathbb{K}\gamma_0N^2\big)
	+2\log\lambda\\
	&=\log\big(\mathbb{K}\gamma_0N^2\big)
	+2s_*\log\mathbf{R}\\
	&\leq C_s(\mathcal{K}\mu^{-1})^{\frac{\gamma_{\mathrm R}}{s-1}}\leq C(\mathcal{K}\mu^{-1})^{\frac{2\gamma_{\mathrm R}}{s-1}},
\end{align*}
where $C_s$ denotes a constant depending only on $s$ (recall that $s_*-s\ll 1$), while $C>0$ is an absolute constant. Thus, setting $\gamma:=2\gamma_R=\frac{8\log\alpha}{\log 2}$, we obtain
\[
T\leq
\exp\left(
C\left(\frac{\mathcal{K}}{\mu}\right)^{
	\frac{\gamma}{s-1}}
\right).
\]
This proves \eqref{time_bound} and concludes the proof.

%%%%%%%%%%%%%%%%%%%%%%%%%%%%%%

%%%%%%%%%%%%%%%%%%%%%%%%%%%

\appendix

\section{Paradifferential calculus}\label{app:para}

\subsection{Bony--Weyl paradifferential calculus}\label{app:para_calculus}

In this subsection we collect some useful results on paradifferential calculus, with reference to the definitions given in Section \ref{sec:energy}.

\begin{lemma}\label{azione}
	The following holds.
	
	{
	\noindent
	$(i)$ Let $m_1,m_2\in \mathbb{R}$ and $s>1$. For
	$a\in\mathcal{N}^{m_1}_{s,M,n}$ and
	$b\in \mathcal{N}^{m_2}_{s,M,n}$ one has
	\begin{equation}\label{prodSimboli}
		|ab|_{\mathcal{N}^{m_1+m_2}_{s,M,n}}
		\lesssim |a|_{\mathcal{N}_{s,M,n}^{m_1}}|b|_{\mathcal{N}_{s,M,n}^{m_2}}.
	\end{equation}
	If moreover $a\in\mathcal{N}^{m_1}_{s,M,n+1}$ and
	$b\in\mathcal{N}^{m_2}_{s,M,n+1}$, then
	\[
		|\{a,b\}|_{\mathcal{N}_{s-1,M,n}^{m_1+m_2-1}}
		\lesssim
		|a|_{\mathcal{N}_{s,M,n+1}^{m_1}}
		|b|_{\mathcal{N}_{s,M,n+1}^{m_2}}.
	\]
	}
	
	\noindent
	$(ii)$ Let $s_0>2$, $m\in \mathbb{R}$ 
	and $a\in\mathcal{N}_{s_0,M,0}^{m}$. 
	Then, for any $s\in \mathbb{R}$, one has
	\begin{equation}\label{actionSob}
		\|T_{a}h\|_{s-m,M} \lesssim |a|_{\mathcal{N}^{m}_{s_0,M,0}}\|h\|_{s,M}\,,
		\qquad \forall h\in H^{s}(\mathbb{T}^{2};\mathbb{C})\,.
	\end{equation}

	{
	\noindent
	$(iii)$ Let $\vartheta>1$ and let $a=a(x)\in H^\vartheta(\mathbb T^2)$
	be independent of $\xi$. Then, for every $r\in\mathbb R$ and every
	$M\geq1$,
	\[
		\|T_a h\|_{r,M}\lesssim_{r,\vartheta}
		\|a\|_{H^\vartheta}\|h\|_{r,M},
		\qquad \forall h\in H^r(\mathbb T^2).
	\]
	The constant is uniform in $M$.
	}
	
	\noindent
	${(iv)}$ Let $s_0>2$, $m\in \mathbb{R}$, {$\rho\geq0$}, and 
	$a\in\mathcal{N}_{s_0+\rho,M,0}^{m}$.
	For $0<\eps_2\leq \eps_1<1/2$ and any 
	$h\in H^{s}(\mathbb{T}^{2};\mathbb{C})$, 
	we define the remainder operator
	\begin{equation}\label{natale}
		R_{a}h := \frac{1}{(2\pi)^{2}}\sum_{j\in \mathbb{Z}^{2}}e^{\mathrm{i} j\cdot x}
		\sum_{k\in\mathbb{Z}^{2}}
		\big(\chi_{\eps_1}-\chi_{\eps_2}\big)\Big(\frac{|j-k|}{\langle j+k\rangle}\Big)
		\widehat{a}\Big(j-k,\frac{j+k}{2}\Big)\widehat{h}(k)\,,
	\end{equation}
	where $\chi_{\eps_1}, \chi_{\eps_2}$ are defined as in \eqref{cutofffunctepsilon}. 
	Then one has the regularizing estimate
	\begin{equation}\label{diffQuanti}
		\|R_a h\|_{s+\rho-m,M} \lesssim
		\|h\|_{s,M}|a|_{\mathcal{N}^{m}_{\rho+s_0,M,0}}\,,
		\qquad \forall h\in H^{s}(\mathbb{T}^{2};\mathbb{C})\,.
	\end{equation}
\end{lemma}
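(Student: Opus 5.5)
The four items of Lemma \ref{azione} are all kernel estimates in Fourier variables, and we plan to prove them by reducing to discrete Young/Schur inequalities with the modified weight $\langle\cdot\rangle_M$. The only structural fact we need about this weight is Peetre's inequality in the form
\[
\langle n\rangle_M^{r}\lesssim_r \langle n-k\rangle_M^{|r|}\langle k\rangle_M^{r},\qquad r\in\mathbb{R},
\]
with constant uniform in $M\ge1$. This holds because $\langle n\rangle_M\le \langle n-k\rangle_M+|k|\le 2\langle n-k\rangle_M\langle k\rangle_M/M$ and $M\ge1$. A related inequality is also needed. On the support of the cutoff $\chi_\eps(|j-k|/\langle j+k\rangle)$ with $\eps<1/4$, we have $|j-k|\le \frac{8}{5}\eps\langle j+k\rangle$, hence $\langle j\rangle_M\sim\langle k\rangle_M\sim\langle (j+k)/2\rangle_M$ uniformly in $M$. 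This is the paraproduct frequency localization, and the $M$-weight respects it since adding $M^2$ to comparable quantities preserves comparability.

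For (i), the product estimate reduces to the tame algebra property of $H^{s}_M:=(H^s,\|\cdot\|_{s,M})$ for $s>1$. Writing $\widehat{ab}=\hat a*\hat b$, we split according to whether $|n-m|\ge|m|$ or not, put $\langle n\rangle_M^s\lesssim \langle n-m\rangle_M^s+\langle m\rangle_M^s$ on the larger factor, and control the other in $\ell^1$ via \eqref{smoothMs}. Since $M^{1-s}\le1$, this gives $\|ab\|_{s,M}\lesssim\|a\|_{s,M}\|b\|_{s,M}$. Applying this to $\partial_\xi^\beta$ derivatives (Leibniz) gives \eqref{prodSimboli}. For the Poisson bracket, each term is a product of a $\xi$-derivative, which lowers the order and raises $|\beta|$ by one, and an $x$-derivative, which costs one unit of $s$. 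This gives the stated loss $s\to s-1$, $n+1\to n$.

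For (ii), we bound
\[
\langle j\rangle_M^{s-m}|\widehat{T_ah}(j)|\lesssim \sum_k \big|\hat a(j-k,\tfrac{j+k}{2})\big|\langle \tfrac{j+k}2\rangle_M^{-m}\langle k\rangle_M^{s}|\hat h(k)|,
\]
using the comparability above. Set $\kappa(\eta):=\sup_\xi\langle\xi\rangle_M^{-m}|\hat a(\eta,\xi)|$. Then $\|\kappa\|_{\ell^1}\lesssim\sum_\eta\langle\eta\rangle_M^{-s_0}\,|a|_{\mathcal N^m_{s_0,M,0}}$, and this sum is finite because $s_0>2$ (so Cauchy–Schwarz works in dimension two). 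Young's inequality then gives \eqref{actionSob}, for every $s$.

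For (iii), we use the same argument with $\kappa(\eta)=|\hat a(\eta)|$, whose $\ell^1$ norm is controlled by $\|a\|_{H^\vartheta}$ since $\vartheta>1$. There is no $\xi$ dependence, so nothing beyond comparability of $\langle j\rangle_M$ and $\langle k\rangle_M$ is needed. This is exactly where the weight $M$ enters harmlessly.

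For (iv), the symbol $\chi_{\eps_1}-\chi_{\eps_2}$ is supported where $|j-k|\ge \frac54\eps_2\langle j+k\rangle$. There $\langle j\rangle_M,\langle k\rangle_M\lesssim \langle j-k\rangle_M/\eps_2$; the $M$ part is fine since $M\le\langle j-k\rangle_M$. This lets us trade $\langle j\rangle_M^{\rho}$ for $\langle j-k\rangle_M^{\rho}$, moving $\rho$ extra derivatives onto the coefficient. The factor $\langle\frac{j+k}2\rangle_M^{m}$ is handled by Peetre's inequality, with possible extra powers of $\langle j-k\rangle_M$ absorbed if $m<0$. The Young argument of (ii) then applies with $a$ measured in $\mathcal N^m_{s_0+\rho,M,0}$.

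The main obstacle is ensuring that no constant depends on $M$. In particular, in (iv) the low-frequency region $|j|,|k|\lesssim M$ makes $\langle\cdot\rangle_M\sim M$ while the separation condition only controls Euclidean sizes. We will check this directly by splitting $|j-k|\lessgtr M$: when $|j-k|\le M$ all brackets are $\sim M$ and trivially comparable; otherwise the Euclidean argument applies.
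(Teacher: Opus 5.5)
Your route matches the paper's: the pointwise decay $|\widehat a(p,\zeta)|\lesssim\langle\zeta\rangle_M^{m}\langle p\rangle_M^{-s_0}|a|_{\mathcal N^m_{s_0,M,0}}$, comparability of $\langle j\rangle_M,\langle k\rangle_M,\langle\frac{j+k}{2}\rangle_M$ on the cutoff support uniformly in $M\ge1$, and discrete Young with $\langle\cdot\rangle_M^{-s_0}\in\ell^1$. For (i) you supply the argument that the paper only cites from \cite{FIJMPA}. Two local steps need repair.

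\emph{Poisson bracket in (i) when $1<s\le 2$.} Here $s-1\le 1$, so $\nabla_\xi a\cdot\nabla_x b$ cannot be bounded through the algebra property at level $s-1$. What you need is the mixed estimate $\|fg\|_{s-1,M}\lesssim\|f\|_{s,M}\|g\|_{s-1,M}$. It follows from your own splitting:
\begin{itemize}
\item where $|m|\ge|n-m|$, put the weight on $g$ and use $\|\widehat f\|_{\ell^1}\lesssim\|f\|_{s,M}$;
\item where $|n-m|>|m|$, write $\langle n-m\rangle_M^{s-1}\le\langle n-m\rangle_M^{s}\langle m\rangle_M^{-1}$ and use $\|\langle\cdot\rangle_M^{-1}\widehat g\|_{\ell^1}\le\|\langle\cdot\rangle_M^{-s}\|_{\ell^2}\|g\|_{s-1,M}\lesssim M^{1-s}\|g\|_{s-1,M}$.
\end{itemize}
Both bounds are uniform in $M$ and need only $s>1$.

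\emph{The factor $\langle\frac{j+k}{2}\rangle_M^{m}$ in (iv).} Peetre's inequality does not handle this factor. It leaves a factor $\langle j-k\rangle_M^{|m|}$, and after you spend $\langle j-k\rangle_M^{\rho}$ on the gain, only the $\langle j-k\rangle_M^{-s_0}$ needed for summability remains, so nothing absorbs it. Indeed, with only the lower support bound, the required pointwise inequality $\langle j\rangle_M^{s+\rho-m}\langle\frac{j+k}{2}\rangle_M^{m}\lesssim\langle k\rangle_M^{s}\langle j-k\rangle_M^{\rho}$ fails for $|k|\gg|j|$ when $s+\rho<m$.

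The missing ingredient is the upper bound $|j-k|\le\frac85\eps_1\langle j+k\rangle$. It holds on the support because $\chi_{\eps_1}-\chi_{\eps_2}$ vanishes wherever $\chi_{\eps_1}$ does. Since $\frac85\eps_1<\frac45<1$, your comparability remark extends from $\eps<1/4$ to $\eps_1<1/2$. Combined with the lower bound, it makes all four brackets $\langle j\rangle_M,\langle k\rangle_M,\langle\frac{j+k}{2}\rangle_M,\langle j-k\rangle_M$ comparable on the support. This is exactly the ``equivalence of weights'' the paper invokes, and it also makes your case split $|j-k|\lessgtr M$ unnecessary.

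A minor point: in (ii), $\sum_\eta\langle\eta\rangle_M^{-s_0}$ converges directly because $s_0>2$. Cauchy--Schwarz is the mechanism in (iii), where $\vartheta>1$ suffices.
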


\begin{proof}
	$(i)$ This is a straightforward adaptation from \cite{FIJMPA}.
	
	\smallskip
	\noindent $(ii)$ 
	First of all, notice that
	{$\|a(\cdot,\xi)\|_{s_0,M} \lesssim \langle\xi\rangle_M^m|a|_{\mathcal{N}^m_{s_0,M,0}},$}
	from which we infer that
	\begin{equation}\label{virus5}
		|\widehat{a}(j,\xi)| \lesssim \langle\xi\rangle_M^{m}|a|_{\mathcal{N}_{s_0,M,0}^{m}}\langle j\rangle_M^{-s_0}\,.
	\end{equation}
	Moreover, since $0<\eps<1/4$, we note that for $\xi,\eta\in \mathbb{Z}^2$,
	\begin{equation}\label{equixieta}
		\chi_{\eps}\left(\frac{|\xi-\eta|}{\langle \xi+\eta\rangle}\right) \neq 0 \quad \Rightarrow \quad
		\begin{cases}
			(1-\tilde{\eps})|\xi| \leq (1+\tilde{\eps})|\eta| \\
			(1-\tilde{\eps})|\eta| \leq (1+\tilde{\eps})|\xi| \,,
		\end{cases}
	\end{equation}
	where $0<\tilde{\eps}<4/5$. Indeed, recalling \eqref{cutofffunct}-\eqref{cutofffunctepsilon},
	we have $|\xi| \leq (1+\tfrac{8}{5}\eps)|\eta| + \tfrac{8}{5}\eps|\xi| + \tfrac{8}{5}\eps$
	which, for $|\eta|\neq 0$, implies
	$(1-\tfrac{8}{5}\eps)|\xi| \leq (1+\tfrac{16}{5}\eps)|\eta|$.
	This implies the first condition in \eqref{equixieta} in the case $|\eta|\neq 0$.
	The case $|\eta|=0$ is trivial since the definition of the cut-off function $\chi_{\eps}$ and $\eps<1/4$
	implies that $|\xi|=0$ as well. The second condition in \eqref{equixieta} is similar.
	
	As a consequence, we have the equivalence
	$\langle\xi+\eta\rangle \sim \langle\xi\rangle$: on one hand,
	$|\xi+\eta| \leq |\xi|+|\eta| \leq (1+C)|\xi|$ for some $C=C(\tilde{\eps})>0$; 
	on the other hand, $|\xi| = \tfrac{1}{2}|\xi-\eta+\xi+\eta| \leq \tfrac{1}{2}|\xi-\eta| + \tfrac{1}{2}|\xi+\eta| \leq \tfrac{1}{2}\tfrac{8\eps}{5}\langle\xi+\eta\rangle + \tfrac{1}{2}\langle\xi+\eta\rangle$.
	Therefore, taking $s_0>2$, we can estimate
	\begin{equation}\label{natale2}
		\begin{aligned}
			\|T_{a}h\|^{2}_{s-m,M}
			&\lesssim
			\sum_{\xi\in\mathbb{Z}^{2}}
			\langle\xi\rangle^{2(s-m)}_M\Big|\sum_{\eta\in \mathbb{Z}^{2}}
			\chi_{\eps}\left(\frac{|\xi-\eta|}{\langle \xi+\eta\rangle}\right)\widehat{a}\Big(\xi-\eta, \frac{\xi+\eta}{2}\Big)\widehat{h}(\eta)\Big|^{2}\\
			&\stackrel{\mathclap{\eqref{virus5}, \eqref{equixieta}}}{\lesssim}\,\,\,\,\,\,
			\sum_{\xi\in \mathbb{Z}^{2}}\langle\xi\rangle^{-2m}_M\Big(\sum_{\eta\in \mathbb{Z}^{2}}
			\frac{\langle\xi\rangle^{m}_M}{\langle\xi-\eta\rangle^{s_0}_M}
			|\widehat{h}(\eta)|\langle \eta\rangle^{s}_M
			\Big)^{2} |a|_{\mathcal{N}_{s_0,M,0}^{m}}^{2}\\
			&\lesssim|a|^{2}_{\mathcal{N}^{m}_{s_0,M,0}} \sum_{\xi\in\mathbb{Z}^2}\Big(\sum_{\eta\in\mathbb{Z}^2}|\widehat{h}(\eta)|\langle\eta\rangle^s_M\frac{1}{\langle\xi-\eta\rangle_M^{s_0}}\Big)^2\\
			&\lesssim|a|^{2}_{\mathcal{N}^{m}_{s_0,M,0}}\big\|\big(|\widehat{h}(\xi)|\langle\xi\rangle_M^s\big) * \langle\xi\rangle_M^{-s_0}\big\|_{\ell^2(\mathbb{Z}^2)}^2 \\
			&\leq |a|^{2}_{\mathcal{N}^{m}_{s_0,M,0}}\|\widehat{h}(\xi)\langle\xi\rangle_M^s\|_{\ell^2(\mathbb{Z}^2)}^2\|\langle{\xi}\rangle_M^{-s_0}\|_{\ell^1(\mathbb{Z}^2)}^2\\
			&\lesssim \|h\|_{s,M}^{2}|a|^{2}_{\mathcal{N}^{m}_{s_0,M,0}}\,,
		\end{aligned}
	\end{equation}
	where in the penultimate passage we used Young's inequality for sequences, and in the last one that $\langle\xi\rangle_M^{-s_0} \leq \langle\xi\rangle^{-s_0}$ is in $\ell^1(\mathbb{Z}^2)$ since $s_0>2$.

	{
	\smallskip
	\noindent $(iii)$ Since $a$ is independent of $\xi$, the Fourier
	coefficient of $T_ah$ is bounded, up to the harmless normalization
	factor, by
	\[
		\sum_{\eta\in\mathbb Z^2}
		|\widehat a(\xi-\eta)|\,|\widehat h(\eta)|.
	\]
	On the support of the paradifferential cut-off,
	$\langle\xi\rangle_M\sim\langle\eta\rangle_M$, uniformly for
	$M\geq1$. Hence Young's inequality gives
	\[
		\|T_ah\|_{r,M}
		\lesssim_r \|\widehat a\|_{\ell^1}\|h\|_{r,M}.
	\]
	Finally, Cauchy--Schwarz yields
	\[
		\|\widehat a\|_{\ell^1}
		\leq
		\left(\sum_{p\in\mathbb Z^2}\langle p\rangle^{-2\vartheta}\right)^{1/2}
		\|a\|_{H^\vartheta}
		\lesssim_\vartheta\|a\|_{H^\vartheta},
	\]
	where the  sum is finite  for $\vartheta>1$ in
	dimension two. This proves the claim.
	}
	
	\smallskip
	\noindent
	${(iv)}$ Notice that the set of $\xi,\eta$ such that
	$(\chi_{\eps_1}-\chi_{\eps_2})\big(|\xi-\eta|/\langle\xi+\eta\rangle\big) = 0$
	contains the set where both cut-off functions equal $1$ or both equal $0$, namely
	\[
	|\xi-\eta| \geq \frac{8}{5}\eps_1 \langle \xi+\eta\rangle\quad \text{or}\quad
	|\xi-\eta| \leq \frac{5}{4}\eps_2 \langle\xi+\eta\rangle\,.
	\]
	Therefore, $(\chi_{\eps_1}-\chi_{\eps_2})\big(|\xi-\eta|/\langle\xi+\eta\rangle\big) \neq 0$
	implies
	\begin{equation}\label{condizio}
		\frac{5}{4}\eps_2 \langle\xi+\eta\rangle \leq |\xi-\eta| \leq \frac{8}{5}\eps_1 \langle\xi+\eta\rangle\,.
	\end{equation}
	For $\xi\in \mathbb{Z}^{2}$ 
	we denote by $\mathcal{A}(\xi)$ the set of 
	$\eta\in \mathbb{Z}^{2}$ such that \eqref{condizio} holds. Moreover (reasoning as in \eqref{virus5}), since
	$a\in \mathcal{N}_{s_0+\rho,M,0}^{m}$, we have that
	\begin{equation}\label{virus6}
		|\widehat{a}(j,\xi)| \lesssim \langle\xi\rangle_M^{m}|a|_{\mathcal{N}_{s_0+\rho,M,0}^{m}} \langle j\rangle_M^{-(s_0+\rho)}\,.
	\end{equation}
	To estimate the remainder in \eqref{natale}, we reason as in \eqref{natale2}.
	By relying on \eqref{condizio} and the equivalence of weights on the support $\mathcal{A}(\xi)$, we extract the required decay:
	\begin{equation}\label{stimarestoresto}
		\begin{aligned}
			\|R_ah\|_{s+\rho-m,M}^{2}
			&\lesssim \sum_{\xi\in\mathbb{Z}^{2}}
			\langle\xi\rangle^{2(s+\rho-m)}_M\Big| \sum_{\eta\in \mathcal{A}(\xi)}(\chi_{\eps_1}-\chi_{\eps_2})\left(\frac{|\xi-\eta|}{\langle\xi+\eta\rangle}\right)\widehat{a}\Big(\xi-\eta, \frac{\xi+\eta}{2}\Big)\widehat{h}(\eta)\Big|^{2}\\
			&\stackrel{\mathclap{\eqref{virus6}}}{\lesssim}
			\sum_{\xi\in \mathbb{Z}^{2}} \langle \xi\rangle^{-2m}_M
			\Big(\sum_{\eta\in \mathcal{A}(\xi)} \frac{\langle\xi-\eta\rangle^{\rho}_M\langle\xi+\eta\rangle_M^{m}}{\langle \xi-\eta\rangle_M^{\rho+s_0}} |\widehat{h}(\eta)|\langle \eta\rangle^{s}_M \Big)^{2}|a|_{\mathcal{N}^{m}_{s_0+\rho,M,0}}^{2}\\
			&\lesssim
			\sum_{\xi\in \mathbb{Z}^{2}}
			\Big(\sum_{\eta\in \mathcal{A}(\xi)}
			\frac{|\widehat{h}(\eta)|\langle \eta\rangle^{s}_M}{\langle\xi-\eta\rangle_M^{s_0}}
			\Big)^{2}|a|_{\mathcal{N}^{m}_{s_0+\rho,M,0}}^{2}\\
			&\lesssim \big\|\big(|\widehat{h}(\xi)|\langle\xi\rangle^s_M\big) * \langle\xi\rangle_M^{-s_0}\big\|^2_{\ell^2(\mathbb{Z}^2)}|a|^{2}_{\mathcal{N}^{m}_{\rho+s_0,M,0}}\\
			&\leq \|\widehat{h}(\xi)\langle\xi\rangle^s_M\|^2_{\ell^2(\mathbb{Z}^2)} \|\langle\xi\rangle_M^{-s_0}\|^2_{\ell^1(\mathbb{Z}^2)}|a|^{2}_{\mathcal{N}^{m}_{\rho+s_0,M,0}}\\
			&\lesssim
			\|h\|_{s,M}^{2}|a|^{2}_{\mathcal{N}^{m}_{\rho+s_0,M,0}}\,,
		\end{aligned}
	\end{equation}
	where in the penultimate step we used Young's inequality for sequences, and in the last one we used that $\langle\xi\rangle^{-s_0}_M \leq \langle\xi\rangle^{-s_0}$ is in $\ell^1(\mathbb{Z}^2)$ since $s_0>2$.
\end{proof}

{
\begin{definition}
	Let $\rho\in (0, 2]$ and set
	\[
		n_\rho:=\begin{cases}
		1,&0<\rho\leq1,\\
		2,&1<\rho\leq2.
		\end{cases}
	\]
	Given two symbols $a\in \mathcal{N}^{m_1}_{s_0+\rho,M,n_\rho}$ and $b\in \mathcal{N}^{m_2}_{s_0+\rho,M,n_\rho}$, we define their Bony-Weyl composition as
	\begin{equation}\label{cancelletto}
		a\#_{\rho} b = \begin{cases}
			ab & \text{if }\rho\in(0,1], \\
			ab + \frac{1}{2\mathrm{i}}\{a,b\} & \text{if }\rho\in (1,2],
		\end{cases}
	\end{equation}
	where we denoted by $\{a,b\} := \nabla_{\xi}a\cdot\nabla_xb - \nabla_xa\cdot\nabla_{\xi}b$ the standard Poisson bracket between symbols.
\end{definition}

\begin{proposition}[\emph{Composition}]\label{prop:compo}
	Fix $s_0>2$, $\rho\in (0,2]$, and $m_1,m_2\in \mathbb{R}$. Then the following holds.
	
	\noindent
	For $a\in \mathcal{N}_{s_0+\rho,M,n_\rho}^{m_1}$ and $b\in\mathcal{N}_{s_0+\rho,M,n_\rho}^{m_2}$, we have 
	\begin{equation}\label{composit}
		T_{a}\circ T_{b} = T_{a\#_{\rho}b} + R_{\rho}(a,b)\,,
	\end{equation}
	where $R_\rho(a,b)$ is a remainder operator satisfying, for any $s\in \mathbb{R}$,
	\begin{equation}\label{composit2}
		\|R_\rho(a,b)h\|_{s-m_1-m_2+\rho,M} \lesssim \|h\|_{s,M}\big(|a|_{\mathcal{N}^{m_1}_{s_0+\rho,M,n_\rho}}|b|_{\mathcal{N}^{m_2}_{s_0,M,n_\rho}} + |a|_{\mathcal{N}^{m_1}_{s_0,M,n_\rho}}|b|_{\mathcal{N}^{m_2}_{s_0+\rho,M,n_\rho}}\big)\,.
	\end{equation}
\end{proposition}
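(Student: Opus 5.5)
The plan is to work directly on the Fourier side, following the kernel argument already used in Lemma \ref{azione}-$(ii)$ and $(iv)$. The Fourier coefficient of $T_a T_b h$ at frequency $\xi$ is a double sum over $\eta,\zeta$, with the product of the cut-offs $\chi_\eps(|\xi-\eta|/\langle\xi+\eta\rangle)\chi_\eps(|\eta-\zeta|/\langle\eta+\zeta\rangle)$ times $\widehat a(\xi-\eta,\tfrac{\xi+\eta}{2})\widehat b(\eta-\zeta,\tfrac{\eta+\zeta}{2})\widehat h(\zeta)$. The coefficient of $T_{a\#_\rho b}h$ at $\xi$ involves $\widehat{(a\#_\rho b)}(\xi-\zeta,\tfrac{\xi+\zeta}{2})$, which is a convolution in the $x$-frequency, evaluated with a single cut-off in $(\xi,\zeta)$.

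The first step is to replace the product of cut-offs by the single cut-off $\chi_{\eps'}(|\xi-\zeta|/\langle\xi+\zeta\rangle)$, and conversely. On the joint support, $\langle\xi\rangle_M\sim\langle\eta\rangle_M\sim\langle\zeta\rangle_M$ by \eqref{equixieta}. The discrepancy is supported where one of the $x$-frequencies $\xi-\eta$ or $\eta-\zeta$ is comparable to $\langle\xi+\zeta\rangle$. There one gains $\langle\xi-\eta\rangle_M^{-\rho}$ or $\langle\eta-\zeta\rangle_M^{-\rho}$ from the extra $\rho$ derivatives of $a$ or $b$, exactly as in \eqref{stimarestoresto}. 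This produces the two terms on the right of \eqref{composit2}, and by Lemma \ref{azione}-$(iv)$ the choice of $\eps$ is harmless.

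The second step is to Taylor expand in the $\xi$-variable. I will write $\widehat a(\xi-\eta,\tfrac{\xi+\eta}{2})$ around $\tfrac{\xi+\zeta}{2}$ with increment $\tfrac{\zeta-\eta}{2}$, and $\widehat b(\eta-\zeta,\tfrac{\eta+\zeta}{2})$ around $\tfrac{\xi+\zeta}{2}$ with increment $\tfrac{\eta-\xi}{2}$. For $\rho\le1$ I expand to order zero. For $\rho\in(1,2]$ I expand to order one, and the first-order terms reassemble, after summing in $\eta$, into $\tfrac{1}{2\ii}\{a,b\}$; the Weyl symmetry is what makes the two half-increments produce exactly this bracket. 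The Taylor remainder, of order $n_\rho$, carries $|\partial_\xi^{n_\rho}a|\lesssim\langle\xi\rangle_M^{m_1-n_\rho}$ times $|\xi-\eta|^{n_\rho}$, possibly also $|\eta-\zeta|^{n_\rho}$. This is why the seminorms with $n_\rho$ derivatives appear. Since $\rho\le n_\rho$ and the $x$-frequencies are at most $\langle\xi\rangle_M$ on the support, I bound $|\xi-\eta|^{n_\rho}\langle\xi\rangle_M^{-n_\rho}\le|\xi-\eta|^{\rho}\langle\xi\rangle_M^{-\rho}$. This yields the gain $\langle\xi\rangle_M^{-\rho}$ at the cost of $\rho$ spatial derivatives placed on one factor. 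Splitting according to whether $|\xi-\eta|\ge|\eta-\zeta|$ or not puts the $\rho$ derivatives on $a$ or on $b$, giving the two products in \eqref{composit2}.

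Finally, each remainder kernel is bounded by
\[
\langle\xi\rangle_M^{m_1+m_2-\rho}\langle\xi-\eta\rangle_M^{-s_0}\langle\eta-\zeta\rangle_M^{-s_0}
\]
times the seminorms, using \eqref{virus5}- and \eqref{virus6}-type decay. Two applications of Young's inequality, with $\langle\cdot\rangle_M^{-s_0}\in\ell^1(\Z^2)$ uniformly in $M$ since $s_0>2$, then close the estimate in $\|\cdot\|_{s-m_1-m_2+\rho,M}$ as in \eqref{natale2}. The main obstacle is keeping all constants uniform in $M$. In particular, bounds like $|\xi-\eta|\lesssim\langle\xi\rangle_M$ and $\langle\xi-\eta\rangle_M^{-1}\le\langle\xi-\eta\rangle^{-1}$ must be used consistently so that no factor of $M$ is lost when trading $x$-frequency powers for $\xi$-decay. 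As a second check, the algebra identifying the Poisson bracket with the correct $\tfrac{1}{2\ii}$ normalization must be verified.
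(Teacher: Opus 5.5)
Your proposal follows essentially the paper's route. Both work with the Fourier kernel of $T_aT_b$ and swap cut-offs as in Lemma~\ref{azione}-$(iv)$. Both Taylor expand the two symbols about $\frac{\xi+\zeta}{2}$ with half-increments and reassemble the low-order terms into $ab+\frac{1}{2\ii}\{a,b\}$. Both then trade $n_\rho$ powers of $x$-frequency for $\langle\xi\rangle_M^{-\rho}$ and close with Young's inequality, using $\langle\cdot\rangle_M^{-s_0}\in\ell^1$ uniformly in $M$. Two points need correcting, however.

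\textbf{First: the cut-off swap only works in one direction.} The ``and conversely'' is not right. The swap must be done in the kernel of $T_{a\#_\rho b}$, written as a convolution over $\eta$ with both symbols frozen at $\frac{\xi+\zeta}{2}$. This is the paper's $\mathcal{E}_{\mathrm{cut}}$, together with $\mathcal{E}^{(1)}_{\mathrm{cut}}$ for the bracket when $\rho>1$.

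Where the single cut-off is nonzero but the product of cut-offs vanishes, $\eta$ is unconstrained. So it is not true that all frequencies are comparable there. With frozen symbols this is harmless: $\langle\frac{\xi+\zeta}{2}\rangle_M\sim\langle\xi\rangle_M$, and at least one $x$-frequency is $\gtrsim\langle\xi\rangle$, so you gain from it.

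If instead you insert the single cut-off into the $T_aT_b$ kernel, the symbols sit at $\frac{\xi+\eta}{2}$ with $|\eta|\gg|\xi|$. The seminorm bounds then only give $\langle\eta\rangle_M^{m_1+m_2-2s_0-\rho}$, which is not summable over $\eta\in\Z^2$ once $m_1+m_2\geq 2s_0+\rho-2$. This regime actually occurs, e.g.\ the order-$2s$ weight composed with an order-two symbol in the energy estimate.

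For the same reason, the Taylor step must stay on the support of the product cut-off, where the intermediate points remain comparable to $\xi$.

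\textbf{Second: the sign of the increment for $a$.} Since $\frac{\xi+\eta}{2}=\frac{\xi+\zeta}{2}+\frac{\eta-\zeta}{2}$, the increment is $\frac{\eta-\zeta}{2}$, not $\frac{\zeta-\eta}{2}$. With your sign the first-order terms add up to $\frac{\ii}{2}(\nabla_\xi a\cdot\nabla_x b+\nabla_x a\cdot\nabla_\xi b)$ rather than $\frac{1}{2\ii}\{a,b\}$.

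Relatedly, $\partial_\xi^{n_\rho}a$ is paired with powers of $\eta-\zeta$, the $x$-frequency of $b$. Symmetrically, $\partial_\xi^{n_\rho}b$ is paired with powers of $\xi-\eta$. So the $\rho$ spatial derivatives naturally fall on the other factor. This is harmless, since both products appear in \eqref{composit2}.
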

	
\begin{proof}
	Let us check the estimate \eqref{composit2} in the  case $\rho\leq 1$. 
	First of all, we note that 
	\begin{align}
		\widehat{(T_{a}T_{b}h)}(\xi) &= \frac{1}{(2\pi)^{2}}\sum_{\eta,\theta\in\mathbb{Z}^{2}} r_1(\xi,\theta,\eta) \widehat{a}\Big(\xi-\theta,\frac{\xi+\theta}{2}\Big)\widehat{b}\Big(\theta-\eta,\frac{\theta+\eta}{2}\Big)\widehat{h}(\eta)\,,\label{def:prodotto1}\\
		\widehat{(T_{ab}h)}(\xi) &= \frac{1}{(2\pi)^{2}}\sum_{\eta,\theta\in\mathbb{Z}^{2}} r_2(\xi,\eta) \widehat{a}\Big(\xi-\theta,\frac{\xi+\eta}{2}\Big)\widehat{b}\Big(\theta-\eta,\frac{\xi+\eta}{2}\Big)\widehat{h}(\eta)\,,\label{def:prodotto2}
	\end{align}
	where $r_1$ and $r_2$ represent the cut-off multipliers derived from \eqref{quantiWeyl}.
	We remark that we can substitute the cut-off function $r_2$ in \eqref{def:prodotto2} with $r_1$ up to smoothing remainders. This follows because one can treat the cut-off difference $r_1(\xi,\theta,\eta)-r_2(\xi,\eta)$ as done in the proof of \eqref{diffQuanti}. {Precisely, define
	\[
	\begin{aligned}
	\widehat{\mathcal E_{\mathrm{cut}}(a,b)h}(\xi)
	:=\frac{1}{(2\pi)^2}\sum_{\eta,\theta\in\mathbb Z^2}
	&(r_1(\xi,\theta,\eta)-r_2(\xi,\eta))\\
	&\times\widehat a\Big(\xi-\theta,\frac{\xi+\eta}{2}\Big)
	\widehat b\Big(\theta-\eta,\frac{\xi+\eta}{2}\Big)
	\widehat h(\eta).
	\end{aligned}
	\]
	The same proof gives $\mathcal E_{\mathrm{cut}}(a,b)$ the bound \eqref{composit2}, so it remains to estimate the part with the common cut-off $r_1$.}
	
	Write $\xi+\theta = \xi+\eta+(\theta-\eta)$.
	By Taylor expanding the symbols at $\frac{\xi+\eta}{2}$, we have
	\begin{equation}\label{expsimbo1}
		\widehat{a}\Big(\xi-\theta,\frac{\xi+\theta}{2}\Big) = \widehat{a}\Big(\xi-\theta,\frac{\xi+\eta}{2}\Big) + \int_0^1\widehat{(\nabla_{\xi}a)}\Big(\xi-\theta, \frac{\xi+\eta}{2}+\sigma \frac{\theta-\eta}{2}\Big)\cdot\frac{\theta-\eta}{2}\,d\sigma\,.
	\end{equation}
	Similarly, one obtains 
	\begin{equation}\label{expsimbo2}
		\widehat{b}\Big(\theta-\eta,\frac{\theta+\eta}{2}\Big) = \widehat{b}\Big(\theta-\eta,\frac{\xi+\eta}{2}\Big) + \int_0^1\widehat{(\nabla_{\xi}b)}\Big(\theta-\eta,\frac{\xi+\eta}{2}+\sigma \frac{\theta-\xi}{2}\Big)\cdot\frac{\theta-\xi}{2}\,d\sigma\,.
	\end{equation}
	By \eqref{expsimbo1} and \eqref{expsimbo2}, we deduce that
	\begin{equation}\label{virus}
		\begin{aligned}
			\widehat{T_{a}T_bh}(\xi) &- \widehat{T_{ab}h}(\xi) = {\widehat{\mathcal E_{\mathrm{cut}}(a,b)h}(\xi)} + \frac{1}{(2\pi)^2}\sum_{\eta,\theta\in\mathbb{Z}^2} r_1(\xi,\theta,\eta)\Big[ \\
			&\qquad \widehat{a}\Big(\xi-\theta,\frac{\xi+\eta}{2}\Big)\int_0^1\widehat{(\nabla_{\xi}b)}\Big(\theta-\eta,\frac{\xi+\eta}{2}+\sigma \frac{\theta-\xi}{2}\Big)\cdot\frac{\theta-\xi}{2}\,d\sigma \\
			&\qquad + \widehat{b}\Big(\theta-\eta,\frac{\xi+\eta}{2}\Big)\int_0^1\widehat{(\nabla_{\xi}a)}\Big(\xi-\theta, \frac{\xi+\eta}{2}+\sigma \frac{\theta-\eta}{2}\Big)\cdot\frac{\theta-\eta}{2}\,d\sigma\\
			&\qquad + \int_0^1\widehat{(\nabla_{\xi}a)}\Big(\xi-\theta, \frac{\xi+\eta}{2}+\sigma \frac{\theta-\eta}{2}\Big)\cdot\frac{\theta-\eta}{2}\,d\sigma \\
			&\qquad \cdot\int_0^1\widehat{(\nabla_{\xi}b)}\Big(\theta-\eta,\frac{\xi+\eta}{2}+\sigma \frac{\theta-\xi}{2}\Big)\cdot\frac{\theta-\xi}{2}\,d\sigma
			\Big]\widehat{h}(\eta)\,.
		\end{aligned}
	\end{equation}
	Consider the first summand in \eqref{virus}. First of all, we note that $r_1(\xi,\theta,\eta)\neq 0$ implies that
	\begin{equation}\label{virus2} 
		(\theta,\eta) \in \Big\{ \frac{|\xi-\theta|}{\langle\xi+\theta\rangle}\leq \frac{8}{5}\eps \Big\} \bigcap \Big\{ \frac{|\theta-\eta|}{\langle\theta+\eta\rangle}\leq \frac{8}{5}\eps \Big\} =: \mathcal{B}(\xi)\,, \quad \xi\in \mathbb{Z}^{2}\,.
	\end{equation} 
	Moreover, we note that 
	\begin{equation}\label{virus3}
		(\theta,\eta)\in \mathcal{B}(\xi) \implies |\xi|\lesssim|\theta|\,,\quad |\theta|\lesssim|\eta|\,,\quad |\eta|\lesssim|\xi|\,.
	\end{equation}
	We now study the first term in \eqref{virus}. We need to bound its
	Sobolev norm $\|\cdot\|_{s-m_1-m_2+\rho,M}$. Recalling
	\eqref{virus2}, its $\xi$-th Fourier coefficient is
	\begin{equation}\label{virus11}
		\begin{aligned}
			&\frac{1}{(2\pi)^2}\sum_{\eta,\theta\in\mathcal{B}(\xi)} r_1(\xi,\theta,\eta)\Big[
			\widehat{a}\Big(\xi-\theta,\frac{\xi+\eta}{2}\Big)\int_0^1\widehat{(\nabla_{\xi}b)}\Big(\theta-\eta,\frac{\xi+\eta}{2}+\sigma \frac{\theta-\xi}{2}\Big)\cdot\frac{\theta-\xi}{2}\,d\sigma\Big]\widehat{h}(\eta) \\
			&\quad =: \frac{1}{2\pi}\sum_{\eta\in\mathbb{Z}^{2} } \widehat{c}\Big(\xi-\eta,\frac{\xi+\eta}{2}\Big)\widehat{h}(\eta)\,,
		\end{aligned}
	\end{equation}
	where, writing explicitly the cut-off inherited from $r_1$, we have defined the symbol
	\[
	\begin{aligned}
		\widetilde r_1(p,\ell,\zeta)
		&:=\chi_\eps\left(\frac{|p-\ell|}{\langle2\zeta+\ell\rangle}\right)
		\chi_\eps\left(\frac{|\ell|}{\langle2\zeta-p+\ell\rangle}\right),\\
		\widehat{c}(p,\zeta) &:= \frac{1}{2\pi}\sum_{\ell\in \mathbb{Z}^{2}} \widehat{a}(p-\ell,\zeta)
		\int_0^1\widehat{(\nabla_{\xi}b)}
		\Big(\ell,\zeta+\sigma\frac{\ell-p}{2}\Big)
		\cdot\frac{\ell-p}{2}\,
		\widetilde r_1(p,\ell,\zeta)\,d\sigma\,,
		\\[-2pt]
		&\hspace{20mm}p\in\mathbb{Z}^{2},\qquad\zeta\in\mathbb{R}^{2}.
	\end{aligned}
	\]
	Reasoning as in \eqref{virus3}, we can deduce that on the support of $\widetilde r_1$ one has
	\begin{equation}\label{virus12}
		\langle2\zeta+p\rangle\sim\langle2\zeta-p\rangle
		\sim\langle2\zeta\rangle,\qquad
		\langle p-\ell\rangle\lesssim\langle\zeta\rangle.
	\end{equation}
	Indeed, the support condition implies $(\theta,\eta)\in \mathcal{B}(\xi)$ by setting 
	\begin{equation}\label{virus13}
		2\xi = 2\zeta+p\,,\quad 2\theta = 2\ell+2\zeta-p\,,\quad 2\eta = 2\zeta-p\,.
	\end{equation}
	Hence \eqref{virus12} follows by \eqref{virus3} and the definition of the cut-offs. Since $0<\rho\leq1$, the second relation in \eqref{virus12} gives
	\[
		|p-\ell|\lesssim
		\langle p-\ell\rangle_M^\rho
		\langle\zeta\rangle_M^{1-\rho}.
	\]
	Therefore we deduce
	\begin{equation*}
		\begin{aligned}
			|\widehat{c}(p,\zeta)| &\lesssim \sum_{\ell\in \mathbb{Z}^2}\langle p-\ell\rangle_M^{-s_0-\rho}\langle\zeta\rangle_M^{m_1}|a|_{\mathcal{N}^{m_1}_{s_0+\rho,M,0}} |p-\ell|\langle\ell\rangle_M^{-s_0}\langle\zeta\rangle_M^{m_2-1}|b|_{\mathcal{N}^{m_2}_{s_0,M,1}}\\
			&\lesssim \langle \zeta\rangle_M^{m_1+m_2-\rho}|a|_{\mathcal{N}^{m_1}_{s_0+\rho,M,0}}|b|_{\mathcal{N}^{m_2}_{s_0,M,1}}
			\sum_{\ell\in \mathbb{Z}^2}\langle p-\ell\rangle_M^{-s_0}\langle\ell\rangle_M^{-s_0}\\
			&\lesssim \langle \zeta\rangle_M^{m_1+m_2-\rho}|a|_{\mathcal{N}^{m_1}_{s_0+\rho,M,0}}|b|_{\mathcal{N}^{m_2}_{s_0,M,1}}\langle p \rangle_M^{-s_0},
		\end{aligned}
	\end{equation*}
	where we have used that there exists a constant $C=C(s_0)>0$ such that for every $M\gg 1$ and every $p\in\mathbb{Z}^2$ one has 
	\[ 
	\sum_{\ell\in\mathbb{Z}^2}\langle p-\ell \rangle_M^{-s_0}\,\langle \ell \rangle_M^{-s_0} \le C\,\langle p \rangle_M^{-s_0}. 
	\] 
	To verify this, set 
    \[
    S_M(p) := \sum_{\ell\in\mathbb{Z}^2}\langle p-\ell \rangle_M^{-s_0}\,\langle \ell \rangle_M^{-s_0}.
    \] 
    The triangle inequality implies 
	\[ \langle p \rangle_M \le \langle \ell \rangle_M + \langle p-\ell \rangle_M \le 2\max\{\langle \ell \rangle_M, \langle p-\ell \rangle_M\}, \] 
	hence for each $\ell$, at least one of $\langle \ell \rangle_M$ or $\langle p-\ell \rangle_M$ is $\ge \tfrac{1}{2}\langle p \rangle_M$. Splitting the sum accordingly yields
	\[ S_M(p) {\leq} \sum_{\langle \ell \rangle_M \ge \tfrac{1}{2}\langle p \rangle_M} \langle p-\ell \rangle_M^{-s_0}\langle \ell \rangle_M^{-s_0} + \sum_{\langle p-\ell \rangle_M \ge \tfrac{1}{2}\langle p \rangle_M} \langle p-\ell \rangle_M^{-s_0}\langle \ell \rangle_M^{-s_0}. \]
	In the first sum, $\langle \ell \rangle_M^{-s_0} \le 2^{s_0}\langle p \rangle_M^{-s_0}$, so 
	\[ \sum_{\langle \ell \rangle_M \ge \tfrac{1}{2}\langle p \rangle_M} \langle p-\ell \rangle_M^{-s_0}\langle \ell \rangle_M^{-s_0} \le 2^{s_0}\langle p \rangle_M^{-s_0}\sum_{\ell\in\mathbb{Z}^2}\langle p-\ell \rangle_M^{-s_0}. \]
	By relabeling, the inner sum equals $\sum_{m\in\mathbb{Z}^2}\langle m \rangle_M^{-s_0}$. For $M\ge 1$, the map $M\mapsto \langle m \rangle_M$ is non-decreasing, hence
	\[ \sum_{m\in\mathbb{Z}^2}\langle m \rangle_M^{-s_0} \le \sum_{m\in\mathbb{Z}^2}\langle m \rangle^{-s_0} < \infty.\]
	The second sum is handled symmetrically.
	
	At this point, one concludes as done in \eqref{natale2}. The other terms in \eqref{virus} are dealt with in a similar way, concluding the proof for $\rho\in(0,1]$. 

	We finally consider $1<\rho\leq2$.  Set
	\[
		p:=\xi-\eta,\qquad \ell:=\theta-\eta,
		\qquad \zeta:=\frac{\xi+\eta}{2}.
	\]
	Thus the two frequency arguments in \eqref{def:prodotto1} are
	$\zeta+\ell/2$ and $\zeta+(\ell-p)/2$. Taylor's formula gives
	\begin{align*}
	\widehat a&\left(p-\ell,\zeta+\frac\ell2\right)
	=A_0+A_1+E_a,\\
	A_0&:=\widehat a(p-\ell,\zeta),
	\qquad
	A_1:=\nabla_\xi\widehat a(p-\ell,\zeta)\cdot\frac\ell2,\\
	E_a&:=\sum_{|\alpha|=2}\frac{2}{\alpha!}
	\left(\frac\ell2\right)^\alpha
	\int_0^1(1-t)\partial_\xi^\alpha\widehat a
	\left(p-\ell,\zeta+t\frac\ell2\right)\,dt,
	\end{align*}
	and, similarly,
	\begin{align*}
	\widehat b&\left(\ell,\zeta+\frac{\ell-p}{2}\right)
	=B_0+B_1+E_b,\\
	B_0&:=\widehat b(\ell,\zeta),
	\qquad
	B_1:=\nabla_\xi\widehat b(\ell,\zeta)
	\cdot\frac{\ell-p}{2},\\
	E_b&:=\sum_{|\beta|=2}\frac{2}{\beta!}
	\left(\frac{\ell-p}{2}\right)^\beta
	\int_0^1(1-t)\partial_\xi^\beta\widehat b
	\left(\ell,\zeta+t\frac{\ell-p}{2}\right)\,dt.
	\end{align*}
	{With the Fourier normalization \eqref{complex-uU},
	the zeroth-order identity is
	\[
	 \frac{1}{2\pi}\sum_{\ell\in\mathbb Z^2}A_0B_0
	 =\widehat{ab}(p,\zeta).
	\]
	Using $q\widehat f(q)=-\mathrm{i}\widehat{\nabla_xf}(q)$, the sum of
	the two first-order terms satisfies}
	\[
	{\frac{1}{2\pi}}\sum_{\ell\in\mathbb Z^2}(A_1B_0+A_0B_1)
	=\frac{1}{2\mathrm{i}}\widehat{\{a,b\}}(p,\zeta).
	\]
	{
	The cut-off remainder also contains
	\[
	 \widehat{\mathcal E_{\mathrm{cut}}^{(1)}(a,b)h}(\xi)
	 :=\frac{1}{(2\pi)^2}\sum_{\eta,\theta\in\mathbb Z^2}
	 \big(r_1(\xi,\theta,\eta)-r_2(\xi,\eta)\big)
	 (A_1B_0+A_0B_1)\widehat h(\eta).
	\]
	Here $p,\ell,\zeta$ have the values fixed above in terms of
	$\xi,\theta,\eta$. The support of $r_1-r_2$ and the proof of
	\eqref{diffQuanti} give both $\mathcal E_{\mathrm{cut}}$ and
	$\mathcal E_{\mathrm{cut}}^{(1)}$ the bound \eqref{composit2}.}

	It remains to bound the terms containing $E_a$, $E_b$, or the
	product $A_1B_1$. On the support of the cut-offs one has
	\[
	\langle\ell\rangle+\langle p-\ell\rangle
	\lesssim\langle\zeta\rangle.
	\]
	For $1<\rho\leq2$, the factors generated by the Taylor expansion
	satisfy
	\begin{align*}
	|\ell|^2
	&\lesssim \langle\ell\rangle_M^\rho
	\langle\zeta\rangle_M^{2-\rho},\\
	|p-\ell|^2
	&\lesssim \langle p-\ell\rangle_M^\rho
	\langle\zeta\rangle_M^{2-\rho},\\
	|\ell|\,|p-\ell|
	&\lesssim \langle\zeta\rangle_M^{2-\rho}
	\big(\langle\ell\rangle_M^\rho
	+\langle p-\ell\rangle_M^\rho\big).
	\end{align*}
	Any additional factor occurring in $A_1E_b$, $E_aB_1$, or
	$E_aE_b$ is absorbed by the corresponding additional
	$\xi$-derivative, using again the support relation above.
	{Set
	\[
	 \mathscr K_2:=A_1B_1+A_0E_b+E_aB_0+A_1E_b+E_aB_1+E_aE_b,
	 \qquad
	 \widehat c_2(p,\zeta):=\frac{1}{2\pi}
	 \sum_{\ell\in\mathbb Z^2}\widetilde r_1(p,\ell,\zeta)\,
	 \mathscr K_2.
	\]}
	Hence the Fourier coefficient $\widehat c_2(p,\zeta)$ of the sum
	of all second-order remainders satisfies
	\begin{align*}
	|\widehat c_2(p,\zeta)|
	\lesssim{}&\langle\zeta\rangle_M^{m_1+m_2-\rho}
	\langle p\rangle_M^{-s_0}\times\Big(
	|a|_{\mathcal N^{m_1}_{s_0+\rho,M,2}}
	|b|_{\mathcal N^{m_2}_{s_0,M,2}}
	+|a|_{\mathcal N^{m_1}_{s_0,M,2}}
	|b|_{\mathcal N^{m_2}_{s_0+\rho,M,2}}
	\Big).
	\end{align*}
	Here we used the convolution estimate proved after
	\eqref{virus12}. The discrete Young inequality, exactly as in
	\eqref{natale2}, gives \eqref{composit2}. The remainders produced
	by changing the cut-offs obey the same estimate, since their
	support makes one of the spatial frequencies comparable with
	$\langle\zeta\rangle$. This concludes the proof for
	$1<\rho\leq2$.
\end{proof}
}

\begin{lemma}[\emph{Paraproduct}]\label{lem:paraproduct}
	Fix $s_0>1$ and 
	let $f,g\in H^{s}(\mathbb{T}^2;\mathbb{C})$ for $s\geq s_0$. Then
	\begin{equation}\label{eq:paraproduct}
		fg = T_{f}g + T_{g}f + \mathcal{R}(f,g)\,,
	\end{equation}
	where for any $0\leq \rho\leq s-s_0$, the remainder satisfies the bound
	\begin{equation}\label{eq:paraproduct22}
		\|\mathcal{R}(f,g)\|_{s+\rho,M} \lesssim \|f\|_{s,M}\|g\|_{s_0+\rho,M} + \|g\|_{s,M}\|f\|_{s_0+\rho,M}\,.
	\end{equation}
\end{lemma}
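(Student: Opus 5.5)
The plan is to run a Bony-type Littlewood--Paley decomposition on the Fourier side, adapted to the cut-off \eqref{cutofffunctepsilon} used in the Bony--Weyl quantization \eqref{quantiWeyl}. Since $f,g$ are functions, $\xi$-independent symbols, $T_fg$ has $\xi$-th Fourier coefficient
\[
\frac{1}{2\pi}\sum_{\eta}\chi_\eps\Big(\frac{|\xi-\eta|}{\langle\xi+\eta\rangle}\Big)\widehat f(\xi-\eta)\widehat g(\eta),
\]
and similarly for $T_gf$ with the roles of $\xi-\eta$ and $\eta$ exchanged. I would therefore define the remainder directly as
\[
\widehat{\mathcal R(f,g)}(\xi)=\frac{1}{2\pi}\sum_{\eta}\Big(1-\chi_\eps\Big(\frac{|\xi-\eta|}{\langle\xi+\eta\rangle}\Big)-\chi_\eps\Big(\frac{|\eta|}{\langle 2\xi-\eta\rangle}\Big)\Big)\widehat f(\xi-\eta)\widehat g(\eta).
\]
With this definition \eqref{eq:paraproduct} holds by construction, and what remains is to bound this symbol-free bilinear form.

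The first step is a support analysis. Since $\eps<1/4$, the two cut-offs cannot both be nonzero, because $|\xi-\eta|\ll|\xi|$ and $|\eta|\ll|\xi|$ are incompatible (up to the trivial case $\xi=0$, handled directly). Hence the multiplier $1-\chi_\eps(\cdot)-\chi_\eps(\cdot)$ is bounded by $1$. Moreover, it vanishes unless $|\xi-\eta|\gtrsim\eps\langle\xi+\eta\rangle$ and $|\eta|\gtrsim\eps\langle 2\xi-\eta\rangle$. Arguing as in \eqref{equixieta}, this forces the high--high regime
\[
\langle\xi\rangle\lesssim\min\{\langle\xi-\eta\rangle,\langle\eta\rangle\},\qquad \langle\xi-\eta\rangle\sim\langle\eta\rangle .
\]
The same inequalities hold for the modified brackets $\langle\cdot\rangle_M$, uniformly in $M\ge1$, because adding $M^2$ under the square root preserves comparability.

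The second step is the weighted estimate. On the support, $\langle\xi\rangle_M^{s+\rho}\lesssim\langle\xi-\eta\rangle_M^{s}\langle\eta\rangle_M^{\rho}$. Since $\langle\xi-\eta\rangle_M\sim\langle\eta\rangle_M$, I can split the weight symmetrically and, by symmetry, only bound the piece where the full weight $s$ falls on $f$. That piece is dominated by
\[
\sum_\xi\Big(\sum_\eta\langle\xi-\eta\rangle_M^{s}|\widehat f(\xi-\eta)|\,\langle\eta\rangle_M^{\rho}|\widehat g(\eta)|\Big)^2\le\|f\|_{s,M}^2\,\big\|\langle\cdot\rangle_M^\rho\widehat g\big\|_{\ell^1}^2,
\]
by discrete Young's inequality $\ell^2*\ell^1\to\ell^2$. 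Cauchy--Schwarz as in \eqref{smoothMs} then gives $\|\langle\cdot\rangle_M^\rho\widehat g\|_{\ell^1}\lesssim\|g\|_{s_0+\rho,M}$, since $s_0>1$ makes $\sum\langle n\rangle_M^{-2s_0}$ finite uniformly in $M$. The symmetric piece gives $\|g\|_{s,M}\|f\|_{s_0+\rho,M}$, which proves \eqref{eq:paraproduct22}. The hypothesis $\rho\le s-s_0$ is only needed so that the right-hand side is finite for $f,g\in H^s$.

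The main obstacle is the support analysis in the first step. One has to check carefully that the Weyl-type normalization, which uses $\langle\xi+\eta\rangle$ rather than $\langle\xi\rangle$, still confines the remainder to frequencies with $\langle\xi\rangle\lesssim\langle\xi-\eta\rangle\sim\langle\eta\rangle$, with constants independent of $M$. I would prove this with the same triangle-inequality manipulations used for \eqref{equixieta} and \eqref{condizio}, treating separately the region where each cut-off is in its transition zone.
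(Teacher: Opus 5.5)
Your proposal is correct and follows the paper's proof essentially line by line. You use the same remainder multiplier $\Theta(\xi,\eta)=1-\chi_\eps\big(\frac{|\xi-\eta|}{\langle\xi+\eta\rangle}\big)-\chi_\eps\big(\frac{|\eta|}{\langle 2\xi-\eta\rangle}\big)$, the same high--high comparability $\langle\xi-\eta\rangle\sim\langle\eta\rangle$ on its support (transferred to $\langle\cdot\rangle_M$ uniformly in $M$), the same weight splitting $\langle\xi\rangle_M^{s+\rho}\lesssim\langle\xi-\eta\rangle_M^{s}\langle\eta\rangle_M^{\rho}+\langle\xi-\eta\rangle_M^{\rho}\langle\eta\rangle_M^{s}$, and then Young's inequality and Cauchy--Schwarz with $s_0>1$.

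One harmless inaccuracy: for $\eps\in(5/24,1/4)$ the two cut-offs can both be nonzero, for instance at $\xi=2\eta$ with $|\eta|$ large. You only use that claim to get $|\Theta|\le 1$, and this follows directly from $0\le\chi\le 1$.
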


\begin{proof}
	Notice that the standard product in frequency reads
	\begin{equation}\label{prodFG}
		\widehat{(fg)}(\xi) = {\frac{1}{2\pi}}\sum_{\eta\in\mathbb{Z}^{2}}\widehat{f}(\xi-\eta)\widehat{g}(\eta)\,.
	\end{equation}
	Consider the cut-off function $\chi_{\eps}$ defined in \eqref{cutofffunctepsilon}
	and define a new cut-off {$\Theta : \mathbb{R}^2 \times \mathbb{R}^2 \to [-1,1]$} via the identity
	\begin{equation}\label{cutoffTHETA}
		1 = \chi_{\eps}\left(\frac{|\xi-\eta|}{\langle\xi+\eta\rangle}\right) + \chi_{\eps}\left(\frac{|\eta|}{\langle 2\xi-\eta\rangle}\right) + \Theta(\xi,\eta)\,.
	\end{equation}
	Recalling \eqref{prodFG} and \eqref{quantiWeyl}, we note that 
	\begin{equation}\label{paraprodFG}
		\begin{aligned}
			\widehat{(T_fg)}(\xi) &= {\frac{1}{2\pi}}\sum_{\eta\in\mathbb{Z}^{2}} \chi_{\eps}\left(\frac{|\xi-\eta|}{\langle\xi+\eta\rangle}\right) \widehat{f}(\xi-\eta)\widehat{g}(\eta)\,, \\
			\widehat{(T_g f)}(\xi) &= {\frac{1}{2\pi}}\sum_{\eta\in\mathbb{Z}^{2}} \chi_{\eps}\left(\frac{|\eta|}{\langle 2\xi-\eta\rangle}\right) \widehat{f}(\xi-\eta)\widehat{g}(\eta)\,. 
		\end{aligned}
	\end{equation}
	Setting $\mathcal{R} := \mathcal{R}(f,g)$, its Fourier transform is given by 
	\begin{equation}\label{pararestoFG}
		\widehat{\mathcal{R}}(\xi) = {\frac{1}{2\pi}}\sum_{\eta\in\mathbb{Z}^{2}} \Theta(\xi,\eta) \widehat{f}(\xi-\eta)\widehat{g}(\eta)\,.
	\end{equation}
	To obtain the second line in \eqref{paraprodFG}, one uses \eqref{quantiWeyl} and performs the change of variables $\xi-\eta\rightsquigarrow \eta$.
	{
	By the definition of the cut-off function $\Theta(\xi,\eta)$, there exists a constant $C_\eps\geq1$ such that
	\begin{equation}\label{condTheta}
		\Theta(\xi,\eta)\neq0
		\quad\Longrightarrow\quad
		C_\eps^{-1}\langle\xi-\eta\rangle
		\leq\langle\eta\rangle
		\leq C_\eps\langle\xi-\eta\rangle.
	\end{equation}
	Indeed, if one of the two frequencies is sufficiently smaller than the other, the corresponding paraproduct cut-off equals $1$ and the other one equals $0$, hence $\Theta=0$. Therefore $\Theta\neq0$ implies the comparability in \eqref{condTheta}.
	}
	Because the two frequencies are equivalent, we have $\langle\xi\rangle_M \leq \langle\xi-\eta\rangle_M + \langle\eta\rangle_M \lesssim \langle\xi-\eta\rangle_M \sim \langle\eta\rangle_M$. This allows us to symmetrically split the Sobolev weight:
	\begin{equation}\label{weight-split}
		\langle\xi\rangle_M^{s+\rho} \lesssim \langle\xi-\eta\rangle_M^{s+\rho} + \langle\eta\rangle_M^{s+\rho} \lesssim \langle\xi-\eta\rangle_M^s\langle\eta\rangle_M^\rho + \langle\xi-\eta\rangle_M^\rho\langle\eta\rangle_M^s\,.
	\end{equation}
	Applying \eqref{weight-split} to the Fourier transform of the remainder, we derive
	\begin{equation*}
		\begin{aligned}
			\|\mathcal{R}(f,g)\|_{s+\rho,M} &= \big\| \langle\xi\rangle_M^{s+\rho} \widehat{\mathcal{R}}(\xi) \big\|_{\ell^2} \\
			&\lesssim \Big\| \sum_{\eta\in\mathbb{Z}^{2}} |\widehat{f}(\xi-\eta)||\widehat{g}(\eta)| \big( \langle\xi-\eta\rangle_M^s\langle\eta\rangle_M^\rho + \langle\xi-\eta\rangle_M^\rho\langle\eta\rangle_M^s \big) \Big\|_{\ell^2} \\
			&\leq \big\| \big( \langle\xi\rangle_M^{s}|\widehat{f}| \big) * \big( \langle\xi\rangle_M^{\rho}|\widehat{g}| \big) \big\|_{\ell^2} + \big\| \big( \langle\xi\rangle_M^{\rho}|\widehat{f}| \big) * \big( \langle\xi\rangle_M^{s}|\widehat{g}| \big) \big\|_{\ell^2} \,.
		\end{aligned}
	\end{equation*}
	By Young's inequality ($\|A*B\|_{\ell^2} \le \|A\|_{\ell^2}\|B\|_{\ell^1}$), we obtain
	\begin{equation*}
		\|\mathcal{R}(f,g)\|_{s+\rho,M} \lesssim \|\langle\xi\rangle_M^{s}\widehat{f}\|_{\ell^2} \|\langle\xi\rangle_M^{\rho}\widehat{g}\|_{\ell^1} + \|\langle\xi\rangle_M^{\rho}\widehat{f}\|_{\ell^1} \|\langle\xi\rangle_M^{s}\widehat{g}\|_{\ell^2} \,.
	\end{equation*}
	Finally, by the Cauchy-Schwarz inequality, since $s_0 > 1$, we can embed the $\ell^1$ norm into a weighted $\ell^2$ norm
	\[
		\|\langle\xi\rangle_M^{\rho}\widehat{g}\|_{\ell^1} \leq \sum_{\xi\in\mathbb{Z}^2} \langle\xi\rangle_M^{s_0+\rho}|\widehat{g}(\xi)| \langle\xi\rangle_M^{-s_0} \leq \|\langle\xi\rangle_M^{s_0+\rho}\widehat{g}\|_{\ell^2} \Big\| \langle\xi\rangle_M^{-s_0} \Big\|_{\ell^2} \lesssim \|g\|_{s_0+\rho,M}\,,
	\]
	and similarly $\|\langle\xi\rangle_M^{\rho}\widehat{f}\|_{\ell^1} \lesssim \|f\|_{s_0+\rho,M}$. Plugging these bounds into the previous estimate yields
	\[
		\|\mathcal{R}(f,g)\|_{s+\rho,M} \lesssim \|f\|_{s,M} \|g\|_{s_0+\rho,M} + \|f\|_{s_0+\rho,M} \|g\|_{s,M}\,,
	\]
	which concludes the proof.
\end{proof}

As a corollary, we deduce the tame estimate in $\| \cdot \|_{s, M}$ norm for the product of two functions.

{
\begin{cor}\label{cor:tame:prod}
Let $s\geq\widetilde s>1$ and $u,v\in H^s(\T^2)$. Then, there
exists a constant $C=C(s,\widetilde s)>0$, independent of $M$, such
that
\[
\|uv\|_{s,M}\leq C(s,\widetilde s)
\big(\|u\|_{s,M}\|v\|_{\widetilde s,M}
+\|u\|_{\widetilde s,M}\|v\|_{s,M}\big).
\]
\end{cor}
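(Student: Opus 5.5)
We derive Corollary \ref{cor:tame:prod} directly from the paraproduct decomposition of Lemma \ref{lem:paraproduct} together with the action estimate of Lemma \ref{azione}(iii). We write
\[
uv=T_u v+T_v u+\mathcal R(u,v)
\]
and estimate the three terms separately in $\|\cdot\|_{s,M}$, keeping track of the fact that all constants come from sums of the form $\sum_p\langle p\rangle^{-2\vartheta}$ or $\sum_p\langle p\rangle_M^{-2\widetilde s}\le\sum_p\langle p\rangle^{-2\widetilde s}$. These sums are uniform in $M\geq1$.

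\emph{Paraproduct terms.} The symbols $u$ and $v$ are independent of $\xi$, so Lemma \ref{azione}(iii) applies with $\vartheta=\widetilde s>1$ and $r=s$. This gives
\[
\|T_u v\|_{s,M}\lesssim \|u\|_{H^{\widetilde s}}\|v\|_{s,M}.
\]
Since $\langle n\rangle\le\langle n\rangle_M$ for $M\ge1$, we have $\|u\|_{H^{\widetilde s}}\le\|u\|_{\widetilde s,M}$, and the bound becomes $\|u\|_{\widetilde s,M}\|v\|_{s,M}$. Symmetrically, $\|T_vu\|_{s,M}\lesssim\|v\|_{\widetilde s,M}\|u\|_{s,M}$. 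If one prefers not to invoke (iii), the same bound follows directly: on the support of the paraproduct cut-off one has $\langle\xi\rangle_M\sim\langle\eta\rangle_M$, so Young's inequality controls $\|T_uv\|_{s,M}$ by $\|\widehat u\|_{\ell^1}\|v\|_{s,M}$. Cauchy--Schwarz then gives $\|\widehat u\|_{\ell^1}\lesssim\|u\|_{\widetilde s,M}$.

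\emph{Remainder.} Apply Lemma \ref{lem:paraproduct} with $s_0=\widetilde s$ and $\rho=0$; this is admissible since $0\le s-\widetilde s$. It yields
\[
\|\mathcal R(u,v)\|_{s,M}\lesssim\|u\|_{s,M}\|v\|_{\widetilde s,M}+\|v\|_{s,M}\|u\|_{\widetilde s,M}.
\]
Summing the three estimates gives the claim.

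The only point requiring care is uniformity in $M$. This is not a real obstacle: the constants come from the support comparability $\langle\xi\rangle_M\sim\langle\eta\rangle_M$, which holds uniformly since $M^2$ is added to both sides, and from the $M$-monotone series noted above, so $C$ depends only on $s$ and $\widetilde s$.
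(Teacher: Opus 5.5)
Your proposal is correct and coincides with the paper's proof. The paper also writes $uv=T_uv+T_vu+\mathcal R(u,v)$, bounds both paraproducts via Lemma~\ref{azione}(iii) with $\vartheta=\widetilde s$, $r=s$ (using $\|u\|_{H^{\widetilde s}}\le\|u\|_{\widetilde s,M}$), and bounds the remainder via \eqref{eq:paraproduct22} with $\rho=0$, $s_0=\widetilde s$. Your remarks on uniformity in $M$ are accurate, and they are already built into those two lemmas.
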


\begin{proof}
By the decomposition \eqref{eq:paraproduct},
\[
 uv=T_uv+T_vu+\mathcal R(u,v).
\]
Item $(iii)$ of Lemma \ref{azione}, applied with
$\vartheta=\widetilde s$ and $r=s$, gives
\[
 \|T_uv\|_{s,M}
 \lesssim \|u\|_{H^{\widetilde s}}\|v\|_{s,M}
 \leq \|u\|_{\widetilde s,M}\|v\|_{s,M},
\]
and, symmetrically,
\[
 \|T_vu\|_{s,M}
 \lesssim \|v\|_{\widetilde s,M}\|u\|_{s,M}.
\]
Finally, \eqref{eq:paraproduct22}, with $\rho=0$ and
$s_0=\widetilde s$, yields
\[
 \|\mathcal R(u,v)\|_{s,M}
 \lesssim
 \|u\|_{s,M}\|v\|_{\widetilde s,M}
 +\|v\|_{s,M}\|u\|_{\widetilde s,M}.
\]
Adding the last three estimates proves the claim.
\end{proof}
}

\subsection{Bony paralinearization and change of quantization}\label{app:bony}

{We introduce the Littlewood--Paley decomposition. We consider the cut-off function $\chi$ defined in \eqref{cutofffunct} and}
\[
{\chi_k(\xi)=\chi(2^{-k}|\xi|), \qquad
\varphi_0:=\chi_0,\qquad \varphi_k:=\chi_k-\chi_{k-1}\quad (k\geq1).}
\]
{We introduce the operators
\[
S_k u=\mathcal{F}^{-1}(\chi(2^{-k}|\xi|)\,\hat{u}(\xi)),
\quad k\in\mathbb Z,
\qquad \mathrm{and} \qquad \Delta_k=S_k-S_{k-1}.
\]}

\begin{remark}\label{rem:weighted-equivalence}
We observe that
\begin{equation}\label{equiv1}
\| u \|^2_{s, M}\le 2^s( M^{2s} \| u\|_{L^2}^2+\| u\|_{H^s}^2) \le 2^{s+1} \| u\|_{s, M}^2.
\end{equation}
\end{remark}
Given a symbol $a\in\mathcal{N}_s^m$, we consider the Bony \textbf{standard} quantization
\begin{equation}\label{quantiBony}
{
\mathcal{T}_a u=\frac{1}{(2\pi)^{2}}\sum_{j, q\in \mathbb{Z}^{2}} e^{\mathrm{i} j\cdot x} \Psi(j-q, q)\,
\widehat{a}(j-q,q) \,\widehat{u}(q).}
\end{equation}
for an admissible cut-off function $\Psi$. {The  relation between this quantization and the Bony--Weyl quantization $T_a$ used in the paper is established below in Lemma \ref{lem:cambiamo}.}\\
By choosing
\begin{equation}\label{cutoff:standard}
\Psi(j, q)= \sum_{k=0}^{+\infty} \chi_{k-3}(j) \varphi_k(q)
\end{equation}
we have that
\[
\mathcal{T}_a u=(S_{-3} a) (S_0 u)+\sum_{k=1}^{+\infty} (S_{k-3} a) (\Delta_k u).
\]
{The proof of item $(iii)$ of Lemma \ref{azione} applies
verbatim to this standard quantization. Thus, if $a=a(x)$ and
$\vartheta>1$, then, for every $r\in\mathbb R$,
\[
 \|\mathcal T_a h\|_{r,M}
 \lesssim_{r,\vartheta}\|a\|_{H^\vartheta}\|h\|_{r,M},
\]
uniformly for $M\geq1$.}

{
The standard Moser composition estimate, combined with
Remark \ref{rem:weighted-equivalence}, gives the corresponding
estimate in the weighted norm.

\begin{lemma}[Moser estimates]\label{lem:Moser:sM}
Let $r\geq0$, and let
$F\in C^\infty(\C;\C)$ (in the real sense) satisfy $F(0)=0$. If
$u\in H^r(\T^2;\C)\cap L^\infty(\T^2;\C)$ and
$\|u\|_{L^\infty}\leq 1$, then, for every $M\geq1$,
\begin{equation}\label{Moser2:base}
 \|F(u)\|_{r,M}\leq C(F,r,\|u\|_{L^{\infty}})\|u\|_{r,M}.
\end{equation}
If moreover, for some $N\in\N$, $N\geq1$,
\[
 D^jF(0)=0,\qquad j=1,\ldots,N,
\]
where $D^jF$ denotes the $j$-th real Fr\'echet derivative, then
\begin{equation}\label{Moser2}
 \|F(u)\|_{r,M}\leq C(F,r,N)\|u\|_{L^{\infty}}^N\|u\|_{r,M}.
\end{equation}
Both constants are uniform for $M\geq1$.
\end{lemma}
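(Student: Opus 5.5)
The plan is to reduce the weighted statement to the classical Moser estimate on $H^r(\T^2)$ via the equivalence \eqref{equiv1} of Remark \ref{rem:weighted-equivalence}. Write
\[
\|F(u)\|_{r,M}^2\le 2^r\big(M^{2r}\|F(u)\|_{L^2}^2+\|F(u)\|_{H^r}^2\big).
\]
The two terms are estimated separately. The constants below depend only on $F$, $r$ and an upper bound for $\|u\|_{L^\infty}$, which is at most $1$; they never see $M$.

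\emph{Low-frequency term.} Since $F(0)=0$, the mean value theorem gives the pointwise bound $|F(u(x))|\le \sup_{|z|\le 1}|DF(z)|\,|u(x)|$. Hence $\|F(u)\|_{L^2}\le C(F)\|u\|_{L^2}$. Under the vanishing hypothesis $D^jF(0)=0$ for $j\le N$, Taylor's formula with integral remainder gives instead
\[
|F(z)|\le C(F,N)|z|^{N+1}\qquad\text{for } |z|\le 1,
\]
and therefore $\|F(u)\|_{L^2}\le C\|u\|_{L^\infty}^N\|u\|_{L^2}$. In both cases we multiply by $M^{2r}$ and use $M^{2r}\|u\|_{L^2}^2\le\|u\|_{r,M}^2$.

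\emph{High-frequency term.} Here we invoke the standard Moser estimate on $\T^2$, namely $\|F(u)\|_{H^r}\le C(F,r,\|u\|_{L^\infty})\|u\|_{H^r}$ for $F(0)=0$, and bound $\|u\|_{H^r}\le\|u\|_{r,M}$. This gives \eqref{Moser2:base} after summing the two contributions. The classical proof uses a Littlewood--Paley telescoping $F(S_ku)-F(S_{k-1}u)=m_k\Delta_k u$ together with Meyer's argument.

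For \eqref{Moser2}, we need the $H^r$ bound to carry the factor $\|u\|_{L^\infty}^N$. Write $F(z)=\sum_{|\alpha|=N+1}z^{\alpha}G_\alpha(z)$ in real coordinates $z=(\Re z,\Im z)$, with $G_\alpha$ smooth; this follows from Taylor's theorem with integral remainder. Equivalently, differentiate: $D^kF$ vanishes to order $N+1-k$ at $0$. In the Meyer/Littlewood--Paley proof the multiplier $m_k=\int_0^1 DF(S_{k-1}u+t\Delta_k u)\,dt$ satisfies
\[
\|\partial^\beta m_k\|_{L^\infty}\lesssim \|u\|_{L^\infty}^{N}\,2^{k|\beta|}.
\]
This holds because each derivative of $DF$ evaluated on the ball of radius $C\|u\|_{L^\infty}$ is bounded by $\|u\|_{L^\infty}^{N-j}$ times the appropriate number of derivative factors. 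Alternatively, and more cheaply for $r$ not an integer issue, one can use a tame product estimate in the style of Corollary \ref{cor:tame:prod} together with \eqref{Moser2:base} for $G_\alpha$:
\[
\|u^\alpha G_\alpha(u)\|_{H^r}\lesssim\|u\|_{L^\infty}^{N}\|u\|_{H^r}.
\]
This is the $L^\infty$-tame product form (Kato--Ponce), iterated $N+1$ times. We then combine with the low-frequency term as before.

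The main obstacle is bookkeeping the exact power $\|u\|_{L^\infty}^N$ in the $H^r$ part. This requires the $L^\infty$-based tame product inequality rather than the $H^{\tilde s}$ version of Corollary \ref{cor:tame:prod}. I would cite it as the classical Kato--Ponce/Moser estimate on $\T^2$ instead of reproving it, noting that it is $M$-independent. Uniformity in $M$ is then automatic, since $M$ enters only through \eqref{equiv1}.
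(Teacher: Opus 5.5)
Your proof is correct. For \eqref{Moser2:base} it matches the paper, but for \eqref{Moser2} it takes a different route.

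For \eqref{Moser2:base} you do what the paper does: split the weighted norm via \eqref{equiv1} and invoke the classical $H^r$ Moser bound. You also make explicit the $L^2$ piece $\|F(u)\|_{L^2}\le C\|u\|_{L^2}$, which the paper's one-line argument uses tacitly.

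For \eqref{Moser2}, the paper never reopens the classical proof. It uses a rescaling:
\begin{itemize}
\item set $\lambda=\|u\|_{L^\infty}$, $v=\lambda^{-1}u$ and $F_\lambda(z)=\lambda^{-(N+1)}F(\lambda z)$;
\item observe that the vanishing of $D^jF(0)$ for $j\le N$ makes the family $\{F_\lambda\}_{0<\lambda\le 1}$ uniformly bounded in every $C^K$ seminorm on the unit ball;
\item apply \eqref{Moser2:base} to $(F_\lambda,v)$, giving $\|F(u)\|_{r,M}=\lambda^{N+1}\|F_\lambda(v)\|_{r,M}\le C\lambda^{N}\|u\|_{r,M}$.
\end{itemize}
This handles the $L^2$ and $H^r$ parts at once.

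You instead track the factor $\|u\|_{L^\infty}^N$ by hand. You do this either through the bounds on the Littlewood--Paley multipliers $m_k$, or through the Taylor factorization $F=\sum z^\alpha G_\alpha$ plus $L^\infty$-tame products. The rescaling is shorter and treats the classical estimate as a black box. It only needs that the classical constant depends on finitely many $C^K$ seminorms of $F$ on a bounded set, which is exactly what your Meyer route checks. Your approach shows concretely where the power $N$ comes from, at the price of re-deriving the Meyer argument or citing the stronger Kato--Ponce inequality.

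One bookkeeping point in your Kato--Ponce variant: $G_\alpha(0)$ need not vanish, so \eqref{Moser2:base} cannot be applied to $G_\alpha$ as written. The inhomogeneous iteration would then leave a term $|G_\alpha(0)|\,\|u\|_{L^\infty}^{N+1}$ with no $H^r$ factor. That term cannot be absorbed into $\|u\|_{L^\infty}^N\|u\|_{H^r}$ when $r\le 1$ on $\T^2$. To fix this, split off $G_\alpha(0)u^\alpha$ (which iterated Kato--Ponce handles directly) and apply the Moser bound to $G_\alpha-G_\alpha(0)$, or use the homogeneous form of Kato--Ponce.
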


\begin{proof}
The standard Moser estimate, for $r\geq 0$ on $\T^2$,  gives
\[
 \|F(u)\|_{H^r}\leq C(F,r,\|u\|_{L^{\infty}})\|u\|_{H^r};
\]
see, for instance, \cite[Theorem~2.87]{BCD}. Applying
Remark \ref{rem:weighted-equivalence} first to $F(u)$ and then to
$u$ proves \eqref{Moser2:base}.

For the second estimate, set $\lambda:=\|u\|_{L^\infty}$. The case
$\lambda=0$ is immediate. Otherwise set
\[
 v:=\lambda^{-1}u,
 \qquad
 F_\lambda(z):=\lambda^{-(N+1)}F(\lambda z).
\]
The vanishing assumptions and Taylor's formula show that the family
$F_\lambda$, $0<\lambda\leq 1$, is uniformly bounded in the finitely
many $C^K$ seminorms entering the standard Moser estimate
on the unit ball. Hence \eqref{Moser2:base}, applied to
$(F_\lambda,v)$, gives
\[
 \|F(u)\|_{r,M}
 =\lambda^{N+1}\|F_\lambda(v)\|_{r,M}
 \leq C(F,r)\lambda^N\|u\|_{r,M}
 \leq C(F,r,N)\|u\|_{L^{\infty}}^N\|u\|_{r,M}.
\]
This proves \eqref{Moser2}.
\end{proof}
}

The main result of this section is the following.

{
\begin{theorem}\label{thm:formulaBony}
Let $s\geq\sigma>1$, let $0\leq\rho<\sigma-1$, and let
$F\in C^\infty(\R;\R)$ satisfy $F(0)=0$. Then, for every
real-valued $u\in H^s(\T^2)$,
\[
 \|F(u)-\mathcal T_{F'(u)}u\|_{s+\rho,M}
 \leq C(F,s,\sigma,\rho,\|u\|_{H^\sigma})
 \|u\|_{\sigma,M}\|u\|_{s,M}.
\]
The function $C$ is independent of $M\geq1$.
\end{theorem}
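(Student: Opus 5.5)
The plan is to reduce to the classical (unweighted) Bony paralinearization theorem and then transfer it to the weighted norm through Remark \ref{rem:weighted-equivalence}, exactly as was done for the Moser estimate in Lemma \ref{lem:Moser:sM}.

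The classical result (Meyer's paralinearization, see e.g. \cite[Theorem~5.? ]{BCD} or its torus version) states that for $s\geq\sigma>1$ (in dimension two) and $0\le\rho<\sigma-1$,
\[
\|F(u)-\mathcal T_{F'(u)}u\|_{H^{s+\rho}}\le C(F,s,\sigma,\rho,\|u\|_{H^\sigma})\|u\|_{H^{s}}.
\]
If the cited form only gives $\|u\|_{W^{\rho,\infty}}$-type control, I would first prove it with the dyadic Meyer formula
\[
F(u)=\sum_k F(S_ku)-F(S_{k-1}u)=\sum_k m_k\,\Delta_k u,\qquad m_k=\int_0^1F'(S_{k-1}u+t\Delta_ku)\,dt .
\]
Subtracting $\mathcal T_{F'(u)}u=\sum_k (S_{k-3}F'(u))\Delta_ku$ leaves $\sum_k (m_k-S_{k-3}F'(u))\Delta_k u$. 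Each coefficient difference gains $2^{-k\rho}$ in $L^\infty$ from the $H^\sigma$ regularity of $F'(u)$, together with $\sigma-1>\rho$ and the Sobolev embedding, and its spectrum is localized at scale $\lesssim 2^k$. A standard summation lemma for series with spectrum in balls then yields the bound in $H^{s+\rho}$.

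The main point is the weighted version, where we need the right-hand side to be $\|u\|_{\sigma,M}\|u\|_{s,M}$. By \eqref{equiv1}, it suffices to bound both $M^{s+\rho}\|F(u)-\mathcal T_{F'(u)}u\|_{L^2}$ and $\|F(u)-\mathcal T_{F'(u)}u\|_{H^{s+\rho}}$.

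For the second term, the unweighted estimate gives $C\|u\|_{H^s}$. I would refine it to $C\|u\|_{H^\sigma}\|u\|_{H^s}$ by observing that the remainder is quadratic in $u$. Indeed, $F(u)-F'(0)u$ vanishes to second order, and $\mathcal T_{F'(0)}u=F'(0)u$ exactly, since a constant symbol is quantized with cut-off $1$ at zero frequency. So one may replace $F'$ by $F'-F'(0)$, whose Moser bound carries a factor $\|u\|_{H^\sigma}$. Then $\|u\|_{H^\sigma}\le\|u\|_{\sigma,M}$ and $\|u\|_{H^s}\le\|u\|_{s,M}$.

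For the $L^2$ term we must gain the factor $M^{s+\rho}$, and this is the expected obstacle. I would use interpolation/monotonicity of the weights:
\[
M^{s+\rho}\|G\|_{L^2}\lesssim \|G\|_{s+\rho,M}
\]
is circular, so instead bound the $L^2$ norm directly. First, $\|F(u)-F'(0)u\|_{L^2}\lesssim \|u\|_{L^\infty}\|u\|_{L^2}$. Second, $\|\mathcal T_{F'(u)-F'(0)}u\|_{L^2}\lesssim\|F'(u)-F'(0)\|_{H^{\widetilde\sigma}}\|u\|_{L^2}$ by the standard-quantization version of Lemma \ref{azione}(iii), with $\widetilde\sigma\in(1,\sigma]$. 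Then
\[
M^{s+\rho}\|u\|_{L^2}=M^{\sigma}\cdot M^{s+\rho-\sigma}\|u\|_{L^2},
\]
and $M^{\sigma}\|u\|_{L^\infty}\lesssim \|u\|_{\sigma,M}$ would absorb $M^\sigma$ via the $\ell^1$ bound \eqref{smoothMs} (which gives only $M^{1}$; hence we use $M^{\sigma-1}$ from weighting and keep $\|u\|_{H^\sigma}$ in the constant). The leftover $M^{s+\rho-\sigma}\|u\|_{L^2}\le \|u\|_{s+\rho-\sigma+\cdot,M}$ must be bounded by $\|u\|_{s,M}$, which needs $\rho\le\sigma$ — satisfied since $\rho<\sigma-1$.

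Careful bookkeeping of how the powers of $M$ split between the two factors (possibly distributing $M^{s+\rho}=M^{s}M^{\rho}$ with $M^\rho\|\cdot\|\lesssim\|u\|_{\sigma,M}$ through \eqref{smoothMs}, since $\rho<\sigma-1$) is the step I expect to require the most care. All constants then depend on $\|u\|_{H^\sigma}$ only through Moser, and are therefore uniform in $M$.
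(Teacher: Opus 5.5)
Your route is the paper's. You subtract $F'(0)z$ using $\mathcal T_{F'(0)}u=F'(0)u$, quote the unweighted Bony estimate for the $H^{s+\rho}$ part, split via \eqref{equiv1}, and bound $\|F(u)-\mathcal T_{F'(u)}u\|_{L^2}$ directly by a low norm of $u$ times $\|u\|_{L^2}$.

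The problem is the power counting for $M^{s+\rho}\|\cdot\|_{L^2}$. This step carries the content of the weighted statement, and it is where $\rho<\sigma-1$ must be used. As written it does not work:
\begin{itemize}
\item The claim $M^\sigma\|u\|_{L^\infty}\lesssim\|u\|_{\sigma,M}$ is false. By \eqref{smoothMs} you only get $M\|u\|_{\sigma,M}$, and this is sharp, e.g.\ for $u_n=\langle n\rangle_M^{-2\sigma}$.
\item A power of $M$ cannot be hidden in a constant depending on $\|u\|_{H^\sigma}$, since the constant must be uniform in $M$.
\end{itemize}
Your split survives only if you keep the gain you discarded: $M^{s+\rho-\sigma}\|u\|_{L^2}\le M^{\rho-\sigma}\|u\|_{s,M}$, not merely $\le\|u\|_{s,M}$. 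The net factor is then $M^{1+\rho-\sigma}\le 1$, which needs $\rho\le\sigma-1$, not $\rho\le\sigma$. Your closing parenthetical ($M^s$ against $\|u\|_{L^2}$, $M^\rho$ against the low factor through \eqref{smoothMs}) is the correct bookkeeping for the piece $F(u)-F'(0)u$.

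For the paraproduct piece, the range $\widetilde\sigma\in(1,\sigma]$ is too large. Moser and $\langle n\rangle^{\widetilde\sigma}\le M^{\widetilde\sigma-\sigma}\langle n\rangle_M^{\sigma}$ give
\[
M^{s+\rho}\big\|\mathcal T_{F'(u)-F'(0)}u\big\|_{L^2}\lesssim M^{s+\rho}\|u\|_{H^{\widetilde\sigma}}\|u\|_{L^2}\leq M^{\rho+\widetilde\sigma-\sigma}\|u\|_{\sigma,M}\|u\|_{s,M},
\]
so you need $\widetilde\sigma\le\sigma-\rho$; with $\widetilde\sigma=\sigma$ you lose $M^\rho$. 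The lower bound $\widetilde\sigma>1$ is forced by item $(iii)$ of Lemma \ref{azione}. Such a $\widetilde\sigma$ therefore exists exactly because $\rho<\sigma-1$. The paper takes $\widetilde\sigma=\frac{1+\sigma-\rho}{2}$ and uses the same $H^{\widetilde\sigma}$ factor for the $F(u)$ piece too.

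With this choice made explicit, your argument coincides with the paper's. Your $\ell^1$ treatment of $F(u)-F'(0)u$ is just the endpoint $\widetilde\sigma=1$ of the same estimate.
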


\begin{proof}
Set $ \widetilde F(z):=F(z)-F'(0)z.$ 
Since 
$\mathcal T_{F'(0)}u=F'(0)u$, we get
\[
 F(u)-\mathcal T_{F'(u)}u
 =\widetilde F(u)-\mathcal T_{\widetilde F'(u)}u.
\]
Thus, after replacing $F$ by $\widetilde F$, we may assume
$F(0)=F'(0)=0$.

We use the classical unweighted Bony paralinearization theorem in
its classical Sobolev form. For the admissible quantization
\eqref{quantiBony}, it gives
\begin{equation}\label{eq:Bony-unweighted}
 \|F(u)-\mathcal T_{F'(u)}u\|_{H^{s+\rho}}
 \leq C(F,s,\sigma,\rho,\|u\|_{H^\sigma})
 \|u\|_{H^\sigma}\|u\|_{H^s}.
\end{equation}
This is the standard Bony paralinearization estimate; see, for
instance, \cite[Theorem~5.2.4]{MetivierPara}. For fixed
$s,\sigma,\rho$, the constant depends on $F$ only through finitely
many $C^K$ seminorms on a bounded interval containing the range of
$u$.

It remains only to pass to the weighted norm. Since
$F(0)=F'(0)=0$, Taylor's formula gives
\[
 F(z)=z^2\int_0^1(1-t)F''(tz)\,dt,
 \qquad
 F'(z)=z\int_0^1F''(tz)\,dt.
\]
Choose the intermediate Sobolev index
\[
 \widetilde\sigma:=\frac{1+\sigma-\rho}{2}.
\]
Then $1<\widetilde\sigma<\sigma-\rho$  because
$\rho<\sigma-1$. The Sobolev
embedding $H^{\widetilde\sigma}(\mathbb T^2)\hookrightarrow
L^\infty(\mathbb T^2)$ and the first Taylor formula imply
\[
 \|F(u)\|_{L^2}
 \leq C(F,\|u\|_{H^\sigma})
 \|u\|_{H^{\widetilde\sigma}}\|u\|_{L^2}.
\]
Moreover, since the scalar map $F'$ vanishes at the origin, the
standard Moser estimate gives
\[
 \|F'(u)\|_{H^{\widetilde\sigma}}
 \leq C(F,\widetilde\sigma,\|u\|_{H^\sigma})
 \|u\|_{H^{\widetilde\sigma}}.
\]
The spatial-symbol estimate in item $(iii)$ of Lemma \ref{azione},
applied to the standard quantization as observed after
\eqref{cutoff:standard}, therefore yields
\begin{equation}\label{eq:Bony-L2}
 \|F(u)-\mathcal T_{F'(u)}u\|_{L^2}
 \leq C(F,\widetilde\sigma,\|u\|_{H^\sigma})
 \|u\|_{H^{\widetilde\sigma}}\|u\|_{L^2}.
\end{equation}
Let $R_F(u):=F(u)-\mathcal T_{F'(u)}u$. By
Remark \ref{rem:weighted-equivalence},
\begin{equation}\label{eq:Bony-weight-split}
 \|R_F(u)\|_{s+\rho,M}
 \lesssim M^{s+\rho}\|R_F(u)\|_{L^2}
 +\|R_F(u)\|_{H^{s+\rho}}.
\end{equation}
Moreover, comparison of the Fourier weights gives
\[
 \|u\|_{H^{\widetilde\sigma}}
 \leq M^{\widetilde\sigma-\sigma}\|u\|_{\sigma,M},
 \qquad
 \|u\|_{L^2}\leq M^{-s}\|u\|_{s,M},
\]
because
$\langle n\rangle^{\widetilde\sigma}
\leq M^{\widetilde\sigma-\sigma}\langle n\rangle_M^\sigma$
for every $n\in\mathbb Z^2$ and $M\geq1$. We also have
\[
 \|u\|_{H^\sigma}\leq\|u\|_{\sigma,M},
 \qquad
 \|u\|_{H^s}\leq\|u\|_{s,M}.
\]
Consequently, using first \eqref{eq:Bony-L2} and then
\eqref{eq:Bony-unweighted},
\[
\begin{aligned}
 M^{s+\rho}\|R_F(u)\|_{L^2}
 &\leq C(F,\widetilde\sigma,\|u\|_{H^\sigma})
 M^{s+\rho}M^{\widetilde\sigma-\sigma}M^{-s}
 \|u\|_{\sigma,M}\|u\|_{s,M}\\
 &=C(F,\widetilde\sigma,\|u\|_{H^\sigma})
 M^{\rho+\widetilde\sigma-\sigma}
 \|u\|_{\sigma,M}\|u\|_{s,M},
\end{aligned}
\]
whereas
\[
 \|R_F(u)\|_{H^{s+\rho}}
 \leq C(F,s,\sigma,\rho,\|u\|_{H^\sigma})
 \|u\|_{\sigma,M}\|u\|_{s,M}.
\]
Since $\rho+\widetilde\sigma-\sigma<0$, the last power of $M$ is
bounded by one.
Combining the last two bounds with \eqref{eq:Bony-weight-split}
proves the claim.
\end{proof}
}

{
\begin{lemma}[\emph{Change of quantization}]\label{lem:cambiamo}
	Let $s\in\mathbb{R}$, $s_0>2$, and let $m \in \mathbb{R}$. Let $\mathcal{T}_a$ denote the classical Bony paradifferential quantization in \eqref{quantiBony} and $T_a = \opbw(a)$ denote the Bony-Weyl quantization in \eqref{quantiWeyl}. We have the following.
	\begin{itemize}
		\item[(i)] If $a \in \mathcal{N}^{m}_{s_0+2,M,2}$, we have
				\begin{equation}\label{eq:transition_bound_gen}
			\mathcal{T}_a h = T_a h + \frac{\mathrm{i}}{2} T_{\nabla_x \cdot \nabla_\xi a} h + R^{(1)}(a) h\,,
		\end{equation}
		where the  remainder $R^{(1)}$ gains exactly two derivatives (order $m-2$):
		\begin{equation}\label{eq:Rtrans1_bound}
			\|R^{(1)}(a) h\|_{s-m+2,M} \lesssim |a|_{\mathcal{N}^{m}_{s_0+2,M,2}} \|h\|_{s,M}\,, \qquad \forall h \in H^s(\mathbb{T}^2)\,.
		\end{equation}
		\item[(ii)] Let $\sigma>1$ and $0\leq\rho<\sigma-1$. If $a=a(x)\in H^\sigma(\mathbb{T}^2)$ is independent of $\xi$, then, for every constant $c$,
		\begin{equation}\label{eq:transition_bound_spatial}
			\|\mathcal{T}_a h-T_a h\|_{s+\rho,M}
			\lesssim \|a-c\|_{\sigma,M}\|h\|_{s,M}\,,
			\qquad \forall h \in H^s(\mathbb{T}^2)\,.
		\end{equation}
	\end{itemize}
	All constants are uniform for $M\geq1$.
\end{lemma}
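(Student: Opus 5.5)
For item (i) we would compare the two quantizations directly on the Fourier side. By \eqref{quantiBony} and \eqref{quantiWeyl}, the $(j,k)$ matrix coefficient of $\mathcal T_a$ is $\Psi(j-k,k)\widehat a(j-k,k)$, while that of $T_a$ is $\chi_\eps\big(\tfrac{|j-k|}{\langle j+k\rangle}\big)\widehat a\big(j-k,\tfrac{j+k}{2}\big)$.

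\emph{Step 1: unify the cut-offs.} On the supports of both cut-offs the spatial frequency $p:=j-k$ is much smaller than $\langle k\rangle\sim\langle j\rangle$. The difference $\Psi(p,k)-\chi_\eps(|p|/\langle j+k\rangle)$ is therefore supported where $|p|\sim\langle k\rangle$. Arguing exactly as in the proof of \eqref{diffQuanti} (Lemma \ref{azione}-(iv)), with $\rho=2$, this difference contributes an operator of order $m-2$ bounded by $|a|_{\mathcal N^m_{s_0+2,M,0}}$. We may then work with the single cut-off $\chi_\eps$.

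\emph{Step 2: Taylor expand.} Expand $\widehat a(p,k)$ around $\zeta=\tfrac{j+k}{2}=k+\tfrac p2$, writing $k=\zeta-\tfrac p2$:
\[
\widehat a(p,k)=\widehat a(p,\zeta)-\tfrac12 p\cdot\nabla_\xi\widehat a(p,\zeta)+E(p,\zeta),
\]
where $E$ is the integral remainder, quadratic in $p$ and involving $\partial_\xi^2 a$. Since $p\,\widehat f(p)=-\mathrm{i}\,\widehat{\nabla_x f}(p)$, the first-order term equals $\tfrac{\mathrm i}{2}\widehat{(\nabla_x\cdot\nabla_\xi a)}(p,\zeta)$, which yields $\tfrac{\mathrm i}{2}T_{\nabla_x\cdot\nabla_\xi a}$. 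For the remainder, $|p|^2|\partial_\xi^2\widehat a|\lesssim \langle p\rangle_M^{-s_0}\langle\zeta\rangle_M^{m-2}|a|_{\mathcal N^m_{s_0+2,M,2}}$, using the extra $s_0+2$ spatial regularity to absorb $|p|^2$. The intermediate point $\zeta+t(k-\zeta)$ has $\langle\cdot\rangle_M\sim\langle\zeta\rangle_M$ on the support. The Young/Schur argument of \eqref{natale2} then gives \eqref{eq:Rtrans1_bound}.

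\emph{Main obstacle: uniformity in $M$.} We must check that no factor $M$ is lost, in particular that $\langle\xi\rangle_M$-weights at comparable frequencies are comparable uniformly in $M$. This follows because $\langle\cdot\rangle_M$ inherits the comparability \eqref{equixieta}.

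For item (ii), $a$ is $\xi$-independent, so the Taylor terms vanish. Both quantizations of the constant $c$ reduce to multiplication by $c$, hence $\mathcal T_a-T_a=\mathcal T_{a-c}-T_{a-c}$. The matrix coefficient of $\mathcal T_{a-c}-T_{a-c}$ is supported where $|p|\sim\langle k\rangle$, and there $\langle j\rangle_M^{s+\rho}\lesssim\langle p\rangle_M^{\rho}\langle k\rangle_M^{s}$. We then split $\langle p\rangle_M^{\rho}|\widehat{(a-c)}(p)|=\langle p\rangle_M^{\rho-\sigma}\cdot\langle p\rangle_M^{\sigma}|\widehat{(a-c)}(p)|$ and apply Cauchy--Schwarz, using $\sum_p\langle p\rangle_M^{2(\rho-\sigma)}<\infty$ since $\sigma-\rho>1$ in dimension two. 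Young's inequality then gives \eqref{eq:transition_bound_spatial} with $\|a-c\|_{\sigma,M}$.
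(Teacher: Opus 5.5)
Your proposal is correct and follows the paper's proof essentially step for step. You first replace the Bony cut-off $\Psi$ by $\chi_\eps$, which is the paper's $R_{\mathrm{cutoff}}$ bounded as in Lemma~\ref{azione}-(iv). You then Taylor-expand $\widehat a(p,k)$ around $\tfrac{j+k}{2}$ to extract $\tfrac{\mathrm i}{2}T_{\nabla_x\cdot\nabla_\xi a}$ and bound the second-order remainder through the $s_0+2$ spatial regularity. Item (ii) reduces, as in the paper, to the cut-off remainder and the Cauchy--Schwarz bound with $\sigma-\rho>1$.
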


\begin{proof}
By recalling the definition of the cutoff function $\chi_{\eps}$ in \eqref{cutofffunctepsilon},
	we introduce the  operator $\widetilde{\mathcal{T}}_a h$	\begin{equation}\label{eq:inter_op}
		\widehat{(\widetilde{\mathcal{T}}_a h)}(\xi) := \frac{1}{2\pi}\sum_{\eta\in\mathbb{Z}^{2}} \chi_{\eps}\left(\frac{|\xi-\eta|}{\langle\xi+\eta\rangle}\right) \widehat{a}(\xi-\eta, \eta) \widehat{h}(\eta)\,.
	\end{equation}
We consider the following splitting
	\begin{equation}\label{eq:trans_split}
		\mathcal{T}_a  - T_a  = R_{\mathrm{cutoff}}+R_{\mathrm{Taylor}}, \qquad R_{\mathrm{cutoff}}:=\mathcal{T}_a  - \widetilde{\mathcal{T}}_a , \qquad  R_{\mathrm{Taylor}}:=\widetilde{\mathcal{T}}_a  - T_a \,.
	\end{equation}
We begin by estimating the \emph{cutoff} remainder.
	The multiplier of this difference is  $\Psi(\xi-\eta, \eta) - \chi_{\eps}\left(\frac{|\xi-\eta|}{\langle\xi+\eta\rangle}\right)$ (recall \eqref{quantiBony}, \eqref{cutoff:standard}). Since both cut-offs are identically $1$ in the  region $|\xi-\eta| \ll \langle\eta\rangle$, their difference  is supported exclusively in the region where the frequencies are comparable, i.e., $|\xi-\eta| \gtrsim \langle \eta \rangle \sim \langle \xi \rangle$. 
	More precisely, if
	\[
	\omega(p,\eta):=\Psi(p,\eta)-
	\chi_\eps\left(\frac{|p|}{\langle p+2\eta\rangle}\right),
	\qquad p=\xi-\eta,
	\]
	the admissibility properties of the two cut-offs give constants
	$c,C>0$ such that, outside a fixed bounded set,
	\begin{equation}\label{eq:cutoff-comparison}
	\omega(\xi-\eta,\eta)\neq0
	\quad\Longrightarrow\quad
	c\langle\eta\rangle\leq|\xi-\eta|\leq C\langle\eta\rangle,
	\qquad
	C^{-1}\langle\eta\rangle\leq\langle\xi\rangle\leq C\langle\eta\rangle.
	\end{equation}
	The finitely many frequencies in the bounded set satisfy the same
	estimates below after changing the constant.

	On this support, we can extract up to $2$ derivatives from the spatial regularity of the symbol. By following the exact same $\ell^2$-convolution arguments used to bound the difference of two admissible cut-offs (see item (iv) of Lemma \ref{azione}), we obtain, for all $h\in H^s$, 
	\[
	\langle\xi\rangle_M^{s-m+2}
	\big|\widehat a(\xi-\eta,\eta)\widehat h(\eta)\big|
	\lesssim
	|a|_{\mathcal N^m_{s_0+2,M,0}}
	\langle\xi-\eta\rangle_M^{-s_0}
	\langle\eta\rangle_M^s|\widehat h(\eta)|.
	\]
	Young's inequality for sequences, together with
	$\|\langle p\rangle_M^{-s_0}\|_{\ell^1}\leq
	\|\langle p\rangle^{-s_0}\|_{\ell^1}<\infty$, then gives
	\begin{equation}\label{eq:R_cutoff_bound}
		\|R_{\mathrm{cutoff}} h\|_{s-m+2,M} \lesssim |a|_{\mathcal{N}^{m}_{s_0+2,M,0}} \|h\|_{s,M}\,.
	\end{equation}
We now analyze the \emph{Taylor} remainder $R_{\mathrm{Taylor}}$.  The Fourier transform reads:
	\begin{equation}\label{eq:diff_Fourier_Taylor}
		\widehat{(R_{\mathrm{Taylor}}h)}(\xi) = \frac{1}{2\pi}\sum_{\eta\in\mathbb{Z}^{2}} \chi_{\eps}\left(\frac{|\xi-\eta|}{\langle\xi+\eta\rangle}\right) \Big[ \widehat{a}(\xi-\eta, \eta) - \widehat{a}\Big(\xi-\eta, \frac{\xi+\eta}{2}\Big) \Big] \widehat{h}(\eta)\,.
	\end{equation}
We expand the  difference by means of the second order Taylor's formula around the point $\frac{\xi+\eta}{2}$. Setting $p = \xi-\eta$, we have:
	\begin{equation}\label{eq:taylor_exp_2nd}
		\widehat{a}(p, \eta) - \widehat{a}\Big(p, \eta + \frac{p}{2}\Big) = - \sum_{|\alpha|=1} \frac{p^\alpha}{2} \partial_\xi^\alpha \widehat{a}\Big(p, \eta + \frac{p}{2}\Big) + \sum_{|\beta|=2} \frac{p^\beta}{2\beta!} \int_0^1 t \, \partial_\xi^\beta \widehat{a}\Big(p, \eta + t \frac{p}{2}\Big) \, dt\,.
	\end{equation}
	Using the  identity $p^\gamma \widehat{a}(p, \zeta) = (-\mathrm{i})^{|\gamma|} \widehat{\partial_x^\gamma a}(p, \zeta)$, we rewrite \eqref{eq:taylor_exp_2nd} as:
	\begin{equation}\label{eq:taylor_exp_2nd_fourier}
		\widehat{a}(p, \eta) - \widehat{a}\Big(p, \eta + \frac{p}{2}\Big) = \frac{\mathrm{i}}{2} \widehat{(\nabla_x \cdot \nabla_\xi a)}\Big(p, \eta + \frac{p}{2}\Big) - \frac{1}{4} \sum_{|\beta|=2} \frac{2!}{\beta!} \int_0^1 t \, \widehat{(\partial_x^\beta \partial_\xi^\beta a)}\Big(p, \eta + t \frac{p}{2}\Big) \, dt\,.
	\end{equation}
	Plugging the first term of \eqref{eq:taylor_exp_2nd_fourier} into the definition of $R_{\mathrm{Taylor}}h$, we exactly recover the Weyl quantization of the first-order correction:
	\begin{equation}
		\frac{\mathrm{i}}{2} \frac{1}{2\pi}\sum_{\eta\in\mathbb{Z}^{2}} \chi_{\eps}\left(\frac{|\xi-\eta|}{\langle\xi+\eta\rangle}\right) \widehat{(\nabla_x \cdot \nabla_\xi a)}\Big(\xi-\eta, \frac{\xi+\eta}{2}\Big) \widehat{h}(\eta) \equiv \frac{\mathrm{i}}{2} \widehat{\big( T_{\nabla_x \cdot \nabla_\xi a} h \big)}(\xi)\,.
	\end{equation}
	We call $\mathcal{E}_{\mathrm{Taylor}}h$ the second term in \eqref{eq:taylor_exp_2nd_fourier}. Defining $b_\beta := \partial_x^\beta \partial_\xi^\beta a$ for $|\beta|=2$, the assumption $a \in \mathcal{N}^{m}_{s_0+2,M,2}$ guarantees $b_\beta \in \mathcal{N}^{m-2}_{s_0,M,0}$. Hence, its Fourier coefficients satisfy:
	\begin{equation}
		\big| \widehat{b_\beta}(\xi-\eta, \zeta) \big| \lesssim \langle\zeta\rangle_M^{m-2} |a|_{\mathcal{N}^{m}_{s_0+2,M,2}} \langle\xi-\eta\rangle_M^{-s_0}\,.
	\end{equation}
	On the support of $\chi_\eps$, we have $\langle \eta + t \frac{\xi-\eta}{2} \rangle_M \sim \langle \xi+\eta \rangle_M \sim \langle \xi \rangle_M \sim \langle \eta \rangle_M$ uniformly for $t\in [0, 1]$. The weighted Sobolev norm of the remainder is bounded via discrete Young's convolution inequality:
	\begin{equation}
		\begin{aligned}
			\|\mathcal{E}_{\mathrm{Taylor}} h\|_{s-m+2, M}^{2}
			&\lesssim \sum_{|\beta|=2} \sum_{\xi\in\mathbb{Z}^{2}}
			\langle\xi\rangle^{2(s-m+2)}_M
			\Bigg| \sum_{\eta\in \mathbb{Z}^{2}}
			\chi_{\eps}\left(\frac{|\xi-\eta|}{\langle\xi+\eta\rangle}\right)\\
			&\hspace{35mm}\times
			\int_0^1 t \big| \widehat{b_\beta}\Big(\xi-\eta,
			\eta + t \frac{\xi-\eta}{2}\Big) \big| \,dt \,
			|\widehat{h}(\eta)| \Bigg|^{2} \\
			&\lesssim |a|_{\mathcal{N}^{m}_{s_0+2,M,2}}^2 \sum_{\xi\in\mathbb{Z}^{2}} \Bigg( \sum_{\eta\in \mathbb{Z}^{2}} \frac{\langle\xi\rangle^{s-m+2}_M \langle\xi+\eta\rangle_M^{m-2}}{\langle\xi-\eta\rangle_M^{s_0}} |\widehat{h}(\eta)| \Bigg)^{2} \\
			&\lesssim |a|_{\mathcal{N}^{m}_{s_0+2,M,2}}^2
			\big\| \langle\cdot\rangle_M^{-s_0} *
			\big(\langle\cdot\rangle_M^{s} |\widehat{h}|\big) \big\|_{\ell^2}^2 \\
			&\lesssim |a|_{\mathcal{N}^{m}_{s_0+2,M,2}}^2 \|h\|_{s,M}^2\,.
		\end{aligned}
	\end{equation}
	We conclude the proof of item (i) by defining the total remainder $R^{(1)}(a) := R_{\mathrm{cutoff}} + \mathcal{E}_{\mathrm{Taylor}}$.

We now prove item (ii). Since the two quantizations are equals on constants, we may replace $a$ by $a-c$. If $a = a(x)$ is independent of $\xi$, its Fourier transform $\widehat{a}(p, \zeta)$ does not depend on the second variable $\zeta$. Therefore, the evaluation at $\eta$ and $\frac{\xi+\eta}{2}$ in \eqref{eq:diff_Fourier_Taylor} are algebraically identical:
	\begin{equation*}
		\widehat{a}(\xi-\eta, \eta) - \widehat{a}\Big(\xi-\eta, \frac{\xi+\eta}{2}\Big) \equiv 0\,.
	\end{equation*}
	Consequently, $R_{\mathrm{Taylor}} \equiv 0$ and the difference reduces entirely to $R_{\mathrm{cutoff}}$. On the support of the difference of the two cut-offs, the $x$-frequency of $a$ is comparable with the frequency of $h$, apart from a bounded low-frequency region. The same weighted convolution argument used in \eqref{eq:R_cutoff_bound}, extracting only $\rho$ derivatives, gives
	\[
	\langle\xi\rangle_M^{s+\rho}
	\big|\widehat{(a-c)}(\xi-\eta)\widehat h(\eta)\big|
	\lesssim
	\langle\xi-\eta\rangle_M^\rho
	\big|\widehat{(a-c)}(\xi-\eta)\big|
	\langle\eta\rangle_M^s|\widehat h(\eta)|.
	\]
	Moreover,
	\begin{equation}\label{eq:spatial-cutoff-l1}
	\begin{aligned}
	\big\|\langle p\rangle_M^\rho\widehat{(a-c)}(p)\big\|_{\ell^1_p}
	&\leq
	\big\|\langle p\rangle_M^{-(\sigma-\rho)}\big\|_{\ell^2_p}
	\|a-c\|_{\sigma,M}\\
	&\lesssim \|a-c\|_{\sigma,M},
	\end{aligned}
	\end{equation}
	uniformly in $M$, since $\sigma-\rho>1$ in dimension two.
	Young's inequality therefore gives
	\[
		\|R_{\mathrm{cutoff}}h\|_{s+\rho,M}
		\lesssim \|a-c\|_{\sigma,M}\|h\|_{s,M},
	\]
	which proves (ii).
\end{proof}
}

\begin{cor}[\emph{Bony--Weyl paralinearization and exact homogeneity}]
\label{cor:weyl_para}
Let $s\geq\sigma>3$ and let $u\in H^s(\T^2)$ be real valued.
Let $F\in C^\infty(\R;\R)$ satisfy $F(0)=0$. Then
\begin{equation}\label{eq:weyl_para_base}
 \|F(u)-\opbw(F'(u))u\|_{s+2,M}
 \lesssim C(F,s,\sigma,\|u\|_{H^\sigma})
 \|u\|_{\sigma,M}\|u\|_{s,M}.
\end{equation}
If, for some $N\in\mathbb N$, $N\geq1$,
\[
 F^{(j)}(0)=0,\qquad j=0,\ldots,N,
\]
then
\begin{equation}\label{eq:weyl_para_homogeneous}
 \|F(u)-\opbw(F'(u))u\|_{s+2,M}
 \lesssim C(F,N,s,\sigma,\|u\|_{H^\sigma})
 \|u\|_{\sigma,M}^{N}\|u\|_{s,M}.
\end{equation}
Both constants are uniform for $M\geq1$.
\end{cor}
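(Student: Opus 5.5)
The plan is to combine the classical-quantization paralinearization of Theorem \ref{thm:formulaBony} with the change of quantization in Lemma \ref{lem:cambiamo}(ii), with gain $\rho=2$. The hypothesis $\sigma>3$ is exactly what makes $\rho=2<\sigma-1$ admissible in both results. I then obtain the homogeneous estimate \eqref{eq:weyl_para_homogeneous} from the base one by the same rescaling trick used in the proof of Lemma \ref{lem:Moser:sM}.

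\emph{Step 1 (base estimate).} Split
\[
F(u)-\opbw(F'(u))u=\big(F(u)-\mathcal T_{F'(u)}u\big)+\big(\mathcal T_{F'(u)}u-T_{F'(u)}u\big).
\]
The first bracket is bounded in $\|\cdot\|_{s+2,M}$ by $C(F,s,\sigma,\|u\|_{H^\sigma})\|u\|_{\sigma,M}\|u\|_{s,M}$, by Theorem \ref{thm:formulaBony} with $\rho=2$. For the second bracket I apply Lemma \ref{lem:cambiamo}(ii) to the spatial symbol $a=F'(u)$ with $c=F'(0)$ and $\rho=2$. This gives the bound $\|F'(u)-F'(0)\|_{\sigma,M}\|u\|_{s,M}$.

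It remains to bound $\|F'(u)-F'(0)\|_{\sigma,M}\lesssim C(F,\sigma,\|u\|_{H^\sigma})\|u\|_{\sigma,M}$. This is the weighted Moser estimate \eqref{Moser2:base} applied to $G:=F'-F'(0)$, which satisfies $G(0)=0$. Since \eqref{Moser2:base} is stated for $\|u\|_{L^\infty}\le1$, I use instead the unrestricted standard Moser estimate (constant depending on $\|u\|_{L^\infty}\lesssim\|u\|_{H^\sigma}$). I then pass to weighted norms through Remark \ref{rem:weighted-equivalence}, exactly as in the proof of Lemma \ref{lem:Moser:sM}. All constants remain uniform in $M$, because each ingredient is.

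\emph{Step 2 (homogeneous case).} Set $\lambda:=\|u\|_{H^\sigma}$.
\begin{itemize}
\item If $\lambda=0$ there is nothing to prove.
\item If $\lambda\ge1$, then $\|u\|_{\sigma,M}^{N-1}\ge\lambda^{N-1}\ge1$, so the base estimate already implies \eqref{eq:weyl_para_homogeneous}.
\item If $0<\lambda<1$, put $v:=u/\lambda$ and $F_\lambda(z):=\lambda^{-(N+1)}F(\lambda z)$.
\end{itemize}
In the last case, since $F^{(j)}(0)=0$ for $j\le N$, Taylor's formula shows that the family $\{F_\lambda\}_{0<\lambda\le1}$ is uniformly bounded in every $C^K$ seminorm on bounded intervals. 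The map $F\mapsto F(u)-\opbw(F'(u))u$ is linear in $F$, and $F_\lambda'(v)=\lambda^{-N}F'(\lambda v)$. Hence
\[
F(u)-\opbw(F'(u))u=\lambda^{N+1}\big(F_\lambda(v)-\opbw(F_\lambda'(v))v\big).
\]
Applying Step 1 to $(F_\lambda,v)$, where $\|v\|_{H^\sigma}=1$, gives a constant uniform in $\lambda$ and the bound
\[
\lambda^{N+1}\|v\|_{\sigma,M}\|v\|_{s,M}=\lambda^{N-1}\|u\|_{\sigma,M}\|u\|_{s,M}\le\|u\|_{\sigma,M}^{N}\|u\|_{s,M},
\]
where the last inequality uses $\lambda\le\|u\|_{\sigma,M}$.

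\emph{Main obstacle.} The delicate point is the uniformity of constants. The constants in Theorem \ref{thm:formulaBony}, in the Moser estimate for $F_\lambda'-F_\lambda'(0)$, and in Lemma \ref{lem:cambiamo} must depend on $F_\lambda$ only through finitely many $C^K$ seminorms on a fixed bounded interval, and they must not depend on $M$. I would check this by inspecting the cited statements: their proofs use $F$ only through such seminorms, and the range of $v$ lies in a fixed interval since $\|v\|_{L^\infty}\lesssim\|v\|_{H^\sigma}=1$.
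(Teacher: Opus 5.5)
Your proposal is correct and follows the paper's proof essentially step for step. The base estimate uses Theorem~\ref{thm:formulaBony} and Lemma~\ref{lem:cambiamo}(ii), both with $\rho=2$ (admissible since $\sigma>3$), together with $c=F'(0)$ and a Moser bound; the homogeneous case then follows by rescaling with $\lambda=\|u\|_{H^\sigma}$ and $F_\lambda(z)=\lambda^{-(N+1)}F(\lambda z)$. The only differences are cosmetic: you handle $\lambda\ge1$ directly from the base estimate, whereas the paper rescales for every $\lambda>0$ with constants depending on $R_0=\max\{1,\lambda\}$, and you explicitly avoid the $\|u\|_{L^\infty}\le1$ restriction of \eqref{Moser2:base}, which the paper applies without comment.
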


\begin{proof}
Since $\sigma>3$, we have $2<\sigma-1$. Theorem
\ref{thm:formulaBony}, with $\rho=2$, gives
\[
 \|F(u)-\mathcal T_{F'(u)}u\|_{s+2,M}
 \lesssim C(F,s,\sigma,\|u\|_{H^\sigma})
 \|u\|_{\sigma,M}\|u\|_{s,M}.
\]
The symbol $F'(u)$ is independent of $\xi$. Applying item (ii) of
Lemma \ref{lem:cambiamo} with $\rho=2$ and
$c=F'(0)$, and then applying \eqref{Moser2:base} to
$F'(z)-F'(0)$, we obtain
\[
 \|(\mathcal T_{F'(u)}-T_{F'(u)})u\|_{s+2,M}
 \lesssim \|F'(u)-F'(0)\|_{\sigma,M}\|u\|_{s,M}
 \lesssim C(F,\sigma,\|u\|_{H^\sigma})
 \|u\|_{\sigma,M}\|u\|_{s,M}.
\]
This proves \eqref{eq:weyl_para_base}.

Assume now the additional vanishing conditions and set
$\lambda:=\|u\|_{H^\sigma}$. If $\lambda=0$, then $u=0$ and the
claim is immediate. Otherwise define
\[
 v:=\lambda^{-1}u,
 \qquad
 F_\lambda(z):=\lambda^{-(N+1)}F(\lambda z).
\]
Then $\|v\|_{H^\sigma}=1$, and hence
$\|v\|_{L^\infty}\leq C_\sigma$. By linearity of the Bony--Weyl
quantization in the symbol and in the argument,
\begin{equation}\label{eq:rescaling_para_identity}
 F(u)-T_{F'(u)}u
 =\lambda^{N+1}\big(F_\lambda(v)-T_{F_\lambda'(v)}v\big).
\end{equation}
Set $R_0:=\max\{1,\lambda\}$. Taylor's formula and the vanishing
assumptions imply that the family
$F_\mu(z)=\mu^{-(N+1)}F(\mu z)$, $0<\mu\leq R_0$, is uniformly
bounded in the finitely many $C^K$ seminorms entering
\eqref{eq:weyl_para_base} on the fixed interval
$|z|\leq C_\sigma$. Therefore the constants obtained by applying
the base estimate to $(F_\lambda,v)$ are uniform.

Using \eqref{eq:rescaling_para_identity}, we obtain
\begin{equation}\label{eq:rescaling_para_bound}
\begin{aligned}
 \|F(u)-T_{F'(u)}u\|_{s+2,M}
 &\leq C(F,N,s,\sigma,R_0)\lambda^{N+1}
 \|v\|_{\sigma,M}\|v\|_{s,M}\\
 &=C(F,N,s,\sigma,R_0)\lambda^{N-1}
 \|u\|_{\sigma,M}\|u\|_{s,M}.
\end{aligned}
\end{equation}
Finally,
\[
 \lambda=\|u\|_{H^\sigma}\leq\|u\|_{\sigma,M}.
\]
Since $N\geq1$, inserting this estimate into
\eqref{eq:rescaling_para_bound} proves
\eqref{eq:weyl_para_homogeneous}.
\end{proof}

\subsection{On the regularity threshold}\label{sec:s_0}

We show here that the condition $s_0>4$ is sufficient for the pseudo-differential bounds used to prove the energy estimates in Proposition \ref{modified_energy}. Moreover, this restriction on $s_0$ appears to be essentially sharp for the techniques employed here. In particular, reaching the finite energy threshold $s_0>1$ would  require new ideas, rather than a  refinement of the present argument.

Let us consider the principal symbol of the commutator driving the energy estimate,
{
\begin{equation}
	c(x, \xi) = \{ \mu^s \langle\xi\rangle_M^{2s}, \, \mu |\xi|^2 \}
	= -2s M^2 \mu^s \langle\xi\rangle_M^{2s-2} (\xi \cdot \nabla_x \mu)\,.
\end{equation}
To evaluate its action on the modified Sobolev space $H^{s}$, we normalize the symbol by the principal weight $\langle\xi\rangle_M^{2s}$. Up to the constant factor $-2s$, which does not affect the derivative count, the normalized symbol of order zero is given by:
\begin{equation}\label{normalized_symb}
	a(x, \xi) = M^2 \mu(x)^s\frac{\xi}{\langle\xi\rangle_M^2} \cdot \nabla_x \mu(x)\,.
\end{equation}
}
According to \cite[Theorem 2.1 and Remark on p.~350]{Taylor_Pseudo}, in dimension two the $L^2$-operator norm of $\opbw(a)$ is bounded by a finite sum of the supremum norms of the symbol's derivatives, both in space and in frequency, up to order two.

The bound reads 
\begin{equation}\label{CV_bound}
\| \opbw(a) \|_{\mathcal{L}(L^2 \to L^2)} \le C \sum_{|\alpha| \le 2} \sum_{|\beta| \le 2} \sup_{(x,\xi) \in \mathbb{T}^2 \times \mathbb{R}^2} \left| \partial_\xi^\alpha \partial_x^\beta a(x, \xi) \right|\,.
\end{equation}
Let us split in the symbol $a(x, \xi)$ in \eqref{normalized_symb} the spatial and frequency variables.

Up to the constant factor $-2s$, we write
$a(x, \xi) = F(x) \cdot G(\xi)$, where $F(x) = \mu(x)^s\nabla_x \mu(x)$ and $G(\xi) = M^2 \xi \langle\xi\rangle_M^{-2}$. 
Then, we can write 
\begin{equation}
	\partial_\xi^\alpha \partial_x^\beta a(x, \xi) = \left( \partial_x^\beta F(x) \right) \cdot \left( \partial_\xi^\alpha G(\xi) \right)\,.
\end{equation}

\noindent \textbf{1. Frequency Derivatives} \par\smallskip
The function $G(\xi) = M^2 \xi (M^2 + |\xi|^2)^{-1}$ is $C^\infty$ for any $\xi \in \mathbb{R}^2$  ($M \gg 1$). Its supremum is bounded by $M/2$. Moreover, any derivative $\partial_\xi^\alpha G(\xi)$ produces a function with a stronger decay at infinity. Therefore, for any multi-index $\alpha$, we have
\begin{equation}
	\sup_{\xi \in \mathbb{R}^2} \left| \partial_\xi^\alpha G(\xi) \right| \le C_\alpha M\,.
\end{equation}
No need to impose additional regularity on $\mu(x)$.\par\smallskip

\noindent \textbf{2. Spatial Derivatives and $s_0 > 4$} \par\smallskip
The bound \eqref{CV_bound} requires taking the supremum of spatial derivatives up to order $|\beta| = 2$.
{
\begin{itemize}
	\item For $|\beta| = 0$: $\sup_{x} |F(x)| \leq \|F\|_{L^\infty}$.
	\item For $|\beta| = 1$: $\sup_{x} |\partial_x F(x)| \leq \|\nabla_xF\|_{L^\infty}$.
	\item For $|\beta| = 2$: $\sup_{x} |\partial_x^2 F(x)| \leq \|\nabla_x^2F\|_{L^\infty}$.
\end{itemize}
Summing these contributions and recalling that $\mu$ is close to $1$, we obtain
\begin{equation}
	\|\opbw(a)\|_{\mathcal L(L^2)}
	\lesssim M\|\mu^s\nabla\mu\|_{W^{2,\infty}}
	\lesssim M\|\mu-1\|_{W^{3,\infty}}
	\lesssim M\|\Phi(Z)\|_{s_0,M}^6.
\end{equation}
Since by definition $\mu-1$ is a smooth function of $\Phi(Z)(x)$ vanishing to order six, the last estimate follows from the tame composition bounds. Controlling its $W^{3,\infty}$ norm requires $\Phi(Z) \in W^{3,\infty}(\mathbb{T}^2)$. 
}
Finally, by the two-dimensional Sobolev embedding theorem,
	\(H^{s_0}(\mathbb{T}^2) \hookrightarrow W^{k,\infty}(\mathbb{T}^2)\)
	for \(s_0>k+1.\)
Setting $k=3$, we obtain 
	\(s_0>4\,.\)

\end{document}